\newcommand{\bb} {\mathbb}
\newcommand{\cal}{\mathcal}
\newcommand{\frk}{\mathfrak}
\newcommand{\cA}{\cal{A}}
\newcommand{\cC}{\cal{C}}
\newcommand{\fg}{\frk{g}}
\newcommand{\id}{\operatorname{id}}
\newcommand{\ve}{\varepsilon}
\newcommand{\wt}{\widetilde}
\newcommand{\vac}{\left|\, 0\, \right>}
\newcommand{\ghat}{\widehat{\fg}}
\newcommand{\FG}{H_a}
\newcommand{\Lg}{L\fg}
\newcommand{\Ob}{\operatorname{Ob}}
\newcommand{\RM}{R\mathrm{-Mod}}
\newcommand{\CM}{\mathbb{C}\mathrm{-Mod}}
\newcommand{\Se}{S_0} 
\newcommand{\SY}{S_t}
\newcommand{\AHS}{(\cA,H,S)}
\newcommand{\MHS}{(\RM,H_m,S_L)}
\newcommand{\RHS}{(\RM,\FG,\Se)}
\newcommand{\Aut}{\operatorname{Aut}}
\newcommand{\Der}{\operatorname{Der}}
\newcommand{\End}{\operatorname{End}}
\newcommand{\Fin}{\mathrm{Fin}}
\newcommand{\Fun}{\operatorname{Fun}}
\newcommand{\Hom}{\operatorname{Hom}}
\newcommand{\Ind}{\operatorname{Ind}}
\newcommand{\Lie}{\operatorname{Lie}}
\newcommand{\Res}{\operatorname{Res}}
\newcommand{\Vir}{\mathrm{Vir}}
\newcommand{\Fine}{\Fin^{\not\equiv}}
\newcommand{\QCoh}{\operatorname{QCoh}}
\newcommand{\Vd}{V_{\delta}}
\newcommand{\VLg}{V_{\Lg}}
\newcommand{\Vvir}{V_{Vir, c}}
\newcommand{\Vghat}{V_{\ghat, k}}
\newcommand{\dd}{\oalign{\raisebox{0.4em}[0.5em][0em]{$\circ$} \cr
                         \raisebox{0.15em}[0.5em][0em]{$\circ$}}}
\newcommand{\nord}[1]{\dd #1 \dd}
\newcommand{\simto}{\xrightarrow{\, \sim \,}}
\newcommand{\longto}{\longrightarrow}
\newcommand{\longinto}{\lhook\joinrel\longrightarrow}
\theoremstyle{plain}
 \newtheorem{thm}{Theorem}[section]
 \newtheorem{lem}[thm]{Lemma}
 \newtheorem{prop}[thm]{Proposition}
 \newtheorem{cor}[thm]{Corollary}
 \newtheorem*{thm*}{Theorem}
\theoremstyle{definition}
 \newtheorem{dfn}[thm]{Definition}
 \newtheorem{eg}[thm]{Example}
 \newtheorem{fct}[thm]{Fact}
 \newtheorem{rmk}[thm]{Remark}
\theoremstyle{remark}
\numberwithin{equation}{section}
\begin{document}


\title{Classical and quantum conformal field theories}
\author{Shintarou Yanagida}
\address{Research Institute for Mathematical Sciences,
Kyoto University, Kyoto 606-8502, Japan}
\email{yanagida@kurims.kyoto-u.ac.jp}

\thanks{
This work is partially supported by the 
JSPS Strategic Young Researcher 
Overseas Visits Program for Accelerating Brain Circulation
``Deepening and Evolution of Mathematics and Physics,
Building of International Network Hub based on OCAMI''}

\date{November 30, 2013}


\begin{abstract}
Following the formuation of Borcherds,
we develop the theory of (quantum) $\AHS$-vertex algebras,
including several concrete examples.
We also investigate the relationship between 
the $\AHS$-vertex algebra 
and the chiral algebra due to Beilinson and Drinfeld.
\end{abstract}

\maketitle


\section{Introduction}
The notion of vertex algebra \cite{B:1986} was introduced by 
Borchreds for a formulation of two-dimensional conformal field theory. 
Although this formulation is successful for encoding algebraic structure of conformal field theory 
and giving representation theoretic treatment,
it involves somewhat complicated axioms and lacks geometric interpretation 
of the quantum field theory.

In \cite{B:1998} and \cite{B:2001},
Borcherds himself reformulated the axiom of vertex algebras 
and built the theory of $\AHS$-vertex algebras.
One of the motivation of his reformulation was the simplification (trivialization) 
of the axioms of vertex algebras,
Another motivation was to relate deformations of vertex algebras with the non-deformed 
vertex algebras in a simple way.

In this note, following the formuation of Borcherds,
we develop the theory of $\AHS$-vertex algebra 
and its quantization,
including several concrete examples.
We also investigate the relationship between 
the vertex algebra (in our sense) 
and the chiral algebra due to Beilinson and Drinfeld.
We shall show that an $\AHS$-vertex algebra 
in a geometric setting gives the reformulation of 
the chiral algebra.
Since the notion of chiral algebra has geometric flavor,
we may say that the $\AHS$-vertex algebra help us in 
geometric investigation of conformal field theory and 
its quantum deformations.

Let us explain the organization of this note briefly.
In \S 1, we review the theory of Borcherds' $\AHS$-vertex algebras,
so the readers who are familiar with the discussion in \cite{B:1998} and 
\cite{B:2001} may skip the details in this part.
Let us mention that in \S 1.3 we included a slightly generalized treatment of 
twisting construction.
In \S 2 we give a few examples of $\AHS$-vertex algebras.
In \S 3 we recall the notion of quantum $\AHS$-vertex algebras.
Its relationship with the deformed chiral algebra is stated in \S 3.3.
In the final \S 4 we investigate the relationship between $\AHS$-vertex algebras 
and the Beilinson-Drinfeld chiral algebras. 

Let us fix some global notations in this note.
\begin{itemize}
\item
For a category $\cC$, the class of objects is denoted by $\Ob(\cC)$ or $\Ob \cC$.
and the class of morphisms between objects $A,B$ is denoted by $\cC(A,B)$ 
or $\Hom_{\cC}(A,B)$.
\item
The composition of morphisms $f:A \to B$ and $g: B \to C$ 
is denoted by $g \circ f$.
\item
Functors between categories means covariant functors.
\item
For a category $\cC$,
its opposite category is denoted by $\cC^{op}$.
\item
For a bialgebra $B$ over a commutative ring $R$ 
let us denote by $\Delta_B$ and $\ve_B$ 
the comultiplication $B \otimes_R B \to B$ 
and the counit $B \to R$. 
\item
For an element $a$ of a bialgebra $B$ 
we express the comultiplication of $a$ by 
$$
 \Delta_B(a)=\sum a' \otimes a''=\sum_{(a)} a' \otimes a'' 
  =\sum_{(a)} a^{(1)} \otimes a^{(2)}.
$$
\end{itemize}

The vertex algebra in the sense of \cite{B:1986} will be called 
ordinary vertex algebra 
(see Definition \ref{dfn:ova} for the precise definition).

\section{Borcherds' formulation of $(\cA,H,S)$-vertex algebras}

In this section we review the formulation of $(\cA,H,S)$-vertex algebra 
due to Borcherds \cite{B:2001}, which is a generalization (and simplification) 
of the classical axiom of vertex algebras \cite{B:1986}.

After recalling the categorical treatment in \cite{B:2001},
we will answer a problem stated there \cite[\S5, Problem 5.5]{B:2001}:
Construct $(R\text{-mod},H,S)$ vertex algebras 
corresponding to the other standard examples of vertex algebras,
such as the vertex algebras of affine and Virasoro algebras.

\subsection{Categorical setting}
\label{subsec:cat}

\begin{dfn}
\begin{enumerate}
\item
Consider a category 
whose objects are finite sets 
and whose morphisms are  arbitrary maps between them.
Denote its skeleton by $\Fin$.
\item
Consider a category 
whose objects are finite sets 
and an equivalence relation $\equiv$,
and whose morphisms are the maps $f$ preserving inequivalence,
i.e.,  we have $a \equiv b$ if $f(a)=f(b)$.
Denote its skeleton by $\Fine$. 
\end{enumerate}
\end{dfn}

Note that both $\Fin$ and $\Fine$ are small.

Although these categories are defined as skeletons of some other categories
and objects should be called `isomorphic classes of sets',
we call them just by `sets' for simplicity. 

Objects of $\Fin$ will be expressed 
by $\emptyset,\{1\}, \{1,2\}, \{1,2,3\}, \ldots$, in form of finite sets.
We will also use the symbols $\{2\}$, $\{3\}$ 
for plain explanations in the discussion later,
although these objects are the same as $\{1\}$.

When denoting an object of $\Fine$,
we will use colons to separate equivalence classes.
For example, $\{1;2\}$ means 
a set consisting of two objects with two equivalent classes,
and $\{1,2\}$ means 
a set consisting of two objects with one  equivalent class.

The disjoint union is a coproduct on the category $\Fin$,
and it makes $\Fin$ into a symmetric monoidal category
(in the sense of \cite{M}). 
We denote the disjoint union in $\Fin$ by the symbol $\sqcup$.
We may define an analogue of the disjoint union for $\Fine$ as follows.

\begin{dfn}
For objects $I$ and $J$ in $\Fine$, 
we define $I \sqcup J$ 
to be the disjoint union of $I$ and $J$ as sets 
with the equivalence relation where 
an element of $I$ and another of $J$ are inequivalent   
and the other cases are 
determiend by the equivalence relations in $I$ and $J$.  
We call this $\sqcup$ on $\Fine$ simply by disjoint union.
\end{dfn}

Then the disjoint union $\sqcup$ on $\Fine$ 
gives a symmetric monoidal structure on $\Fine$,
although it is not a coproduct on $\Fine$ 
as mentioned in \cite[\S3]{B:2001}.f

Also note that $\Fin$ can be considered as a full subcategory of $\Fine$
by imposing the indiscrete equivalence on each set
(all the elements in a set are defined to be equivalent).
This embedding is denoted by 
\begin{align}
\label{eq:Fin:embed}
\iota: \Fin \longinto \Fine
\end{align}
 
In the following we fix a category $\cA$  
which is additive, symmetric monoidal, 
cocomplete and 
such that colimits commute with tensor products.
We denote by $\otimes$ 
the bifunctor $\cA \times \cA \to \cA$ 
giving the monoidal structure of $\cA$,
and by ${\bf 1}$ the unit object.
When emphasizing that we are considering the monoidal category $\cA$,
we sometimes denote the tensor product by $\otimes_{\cA}$.
The isomorphism $M \otimes_{\cA} N \to N \otimes_{\cA} M$ 
giving the symmetric monoidal structure on $\cA$ 
will be denoted by $\sigma_{M,N}$
and called symmetry.

The main example of $\cA$ we consider is the category 
$\RM$ of modules over a commutative ring $R$.
The tensor product is given by the tensor product $\otimes_R$ 
of modules over $R$,
and the symmetry is given by the transposition map 
$\sigma_{M,N}: M\otimes_R N \to N \otimes_R M$ 
of $R$-modules.

\begin{dfn}
For a  category $\cC$ 
let us denote by $\Fun(\cC,\cA)$ 
the category of functors from $\cC$ to $\cA$. 
By the additive monoidal structure on $\cA$,
the category $\Fun(\cC,\cA)$ is an additive symmetric monoidal structure,
where the tensor product is given by 
$(U \otimes V)(I) := U(I) \otimes_{\cA} V(I)$ 
for $I \in \Ob(\cA)$ and $U,V \in \Ob(\Fun(\cC,\cA))$. 
\end{dfn}

Let us recall the notion of rings (or algebras) in monoidal categories.
A ring object $A$ of a monoidal category $(\cal{D},\otimes,{\bf 1})$ is 
an object of $\cal{D}$ such that 
for any $X \in \Ob(\cal{D})$ 
the set of morphisms $\Hom_{\cal{D}}(X,A)$ is a ring,
and the correspondence $Y \to \Hom_{\cal{D}}(X,A)$ 
is a functor from $\cal{D}$ to the cageory of rings.
Here a ring means an associative unital ring.

If $\cal{D}$ has finite products and a terminal object $T$,
then a ring object can be defined similarly as the usual ring:
there exist morphisms $a: A \otimes A \to A$ (addition), 
$r: A \to A$ (inversion),
$z: T \to A$ (zero),
$m: A \otimes A \to A$ (multiplication)
and $u: {\bf 1} \to A$ (unit),
satisfying the sets of axioms.
%

One can define a commutative ring object as 
a ring object with the multiplication $m$ satisfying the commutative axiom.
We omit the detail.

A coalgebra object is defined in a similar way,
as an object with morphisms 
$a,r,z$, $\Delta:A \to A \otimes A$ (comultiplication) 
and $\ve: A \to  {\bf 1}$ (counit) 
satisfying several sets of axioms.
A cocommutative coalgebra object is defined in a similar way.

Similarly we can define a bialgebra object,
a module object over a ring object,
a comodule object over a coalgebra object 
and so on in a given category.

Hereafter the symbol $\cC$ 
means the category $\Fin$ or $\Fine$.

\begin{dfn}
\label{dfn:T_*}
Let $\cC$ be the category $\Fin$ or $\Fine$.
Let $A$ be a ring object in $\cA$.
Define an object $T_*(A)$ in $\Fun(\cC,\cA)$ 
by $T_*(A)(I) := \otimes_{i \in I} A$ for $I \in \Ob(\cC)$,
and for a morphism $f: I \to J$ in $\cC$ 
define $T_*(A)(f): T_*(A)(I)  \to T_*(A)(J)$ 
in a natural way by the multiplication and the unit of $A$. 
We sometimes write $f_* := T_*(A)(f)$ for simplicity.
\end{dfn}

Let us expain the `natural way' in the definition above by examples.

\begin{eg}
\label{eg:f}
\begin{enumerate}
\item
For the identity morphism $\id_I: I = \{1,2,\ldots,n\} \to I$ in $\Fin$,
$\id_{I,*}: A^{\otimes n} \to A^{\otimes n}$ is given by $\id_{A^{\otimes n}}$.

\item 
Consider a surjective morphism $p: \{1,2\} \to \{1\}$ in $\Fin$.
Then $p_*: A \otimes_{\cA} A  \to A$
is defined to be the multiplication morphism 
$m: A \otimes_{\cA} A \to A$ of $A$.

\noindent
For the morphism 
$p: \{1,2,3\} \to \{1\}$ in $\Fin$, 
$p_*: A \otimes_{\cA} A \otimes_{\cA} A  \to A$
is defined to be the composition of multiplication morphisms 
$m^2 := m \circ (m \otimes \id_{\cA})= m \circ ( \id_{\cA}  \otimes m )$.

\noindent
In general, for the surjective morphism 
$p_I: I = \{1,2,\ldots,n\} \to \{1\}$ in $\Fin$,
$p_{I,*}$ is defined by the $n$-times composition of multiplication morphisms

\item
For the morphism  
$i: \emptyset \to \{1\}$,
$i_*: T \to A$ 
is defined to be the unit morphism $u:T \to A$.

\noindent
Similarly, for the morphism 
$i_I: \emptyset \to I = \{1,2,\ldots,n\}$,
$i_{I,*}: T \to A^{\otimes n}$ 
is given by  $u^{\otimes n}$.

\item
For the morphism  $s: \{1,2\} \to \{1,2\}$ in $\Fin$ 
with $s(1)=2$ and $s(2)=1$,
$s_*: A \otimes_{\cA} A  \to A \otimes_{\cA} A$
is defined to be the isomorphism 
$\sigma_{A,A}: A \otimes_{\cA} A  \to A \otimes_{\cA} A$  
given by the symmetric monoidal structure of $\cA$.
\end{enumerate}
\end{eg}

Since any morphism $f$ in $\Fin$ can be decomposed into $\id_I$, $p_J$ and 
$s$ given in Example above,
we can compute $f_*$ by combining the rules given above.
The ways of decomposition are not unique,
but the resulting $f_*$ is determined uniquely 
by the symmetric monoidall structure of $\cA$.

Here we give a few more examples for $f_*$.

\begin{eg}
\begin{enumerate}
\item
For the morphism  
$i: \{1\} \to \{1,2\}$ with $i(1)=1$,
$i_*: A \to A \otimes A$ 
is defined to be $(\id_A, u \circ t_{A})$,
where $t_A: A \to T$ is the canonical 
morphism from $A$ to the terminal object $T$.

\noindent
In general,
for the injective morphism 
$i: \{1,2,\ldots,m\} \to \{1,2,\ldots,n\}$ ($m \le n$) 
with $i(j)=j$,
$i_*: A^{\otimes m} \to A^{\otimes n}$
is defined to be 
$\id_A^{\otimes m} \otimes  u^{\otimes(n-m)}$.

\item
For the morphism $f: \{1,2\} \to \{1,2\}$ in $\Fin$ 
with $f(1)=f(2)=1$,
$f_*: A \otimes_{\cA} A  \to A \otimes_{\cA} A$
is defined to be $m \otimes u$.
\end{enumerate}
\end{eg}

The case $\Fine$ is quite similar, and we omit the detail.

\begin{rmk}
The axiom of ring object 
implies that for commutative diagrams
\begin{align*}
\xymatrix{
 \{1,2,3\} \ar[r]^{f} \ar[d]_{g}
&\{1,2\}   \ar[d]^{h}
&
&\{1\}     \ar[r]^{i} \ar[d]_{j}
&\{1,2\}   \ar[d]^{h} 
\\
 \{1,2\}   \ar[r]_{h} 
&\{1\}
&
&\{1,2\}   \ar[r]_{h}
&\{1\} 
}
\end{align*}
in $\Fin$ with 
\begin{align*}
&f(1)=f(2)=1,\  f(3)=2, \quad
 g(1)=1,\  g(2)=g(3)=1, \quad 
 h(1)=h(2)=1,
\\
&i(1)=1,\quad
 j(1)=2,
\end{align*}
the diagrams 
\begin{align*}
\xymatrix{
 A^{\otimes 3} \ar[rr]^{f_* = m \otimes \id_A} 
               \ar[d]_{g_* = \id_A \otimes m}
&
&A^{\otimes 2} \ar[d]^{h_* = m}
&
&
&A             \ar[rr]^{i_* = (\id_A, u \circ t_A)} 
               \ar[d]_{j_* = (u \circ t_A, \id_A)}
&
&A^{\otimes 2} \ar[d]^{h_* = m} 
\\
 A^{\otimes 2} \ar[rr]_{h_* = m} 
&
&A
&
&
&A^{\otimes 2} \ar[rr]_{h_* = m}
&
&A 
}
\end{align*}
in $\cA$ commute.
\end{rmk}

One can check that 

\begin{lem}
\label{lem:cro}
Let $A$ be a commutative ring object in $\cA$.
Then the object $T_*(A)$ 
is a commutative ring object in $\Fun(\Fin,\cA)$.
\end{lem}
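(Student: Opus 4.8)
The plan is to equip $T_*(A)$ with the structure of a commutative monoid object in the additive symmetric monoidal category $\Fun(\Fin,\cA)$; together with the abelian group structure on the Hom-sets coming from the additivity of $\cA$ (which supplies the addition, zero and negation, and makes the tensor product biadditive), this is exactly the structure of a commutative ring object. The only genuine content is therefore the multiplication $M: T_*(A)\otimes T_*(A)\to T_*(A)$ and the unit $U: {\bf 1}\to T_*(A)$, where ${\bf 1}$ is the unit object of $\Fun(\Fin,\cA)$, namely the constant functor $I\mapsto {\bf 1}$.

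First I would write down the structure maps using the coproduct structure of $\Fin$. For $I\in\Ob(\Fin)$ the pointwise tensor gives $(T_*(A)\otimes T_*(A))(I)=A^{\otimes I}\otimes A^{\otimes I}$, which is canonically identified with $A^{\otimes(I\sqcup I)}=T_*(A)(I\sqcup I)$ by the associativity and symmetry coherence of $\cA$. I then set $M_I:=T_*(A)(\nabla_I)$ under this identification, where $\nabla_I=[\id_I,\id_I]: I\sqcup I\to I$ is the fold map (the codiagonal of the coproduct); concretely $M_I$ is the slot-by-slot multiplication by $m$. Likewise I set $U_I:=T_*(A)(e_I)$, where $e_I:\emptyset\to I$ is the unique map and $T_*(A)(\emptyset)={\bf 1}$; concretely $U_I=u^{\otimes I}$. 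Addition, zero and negation are taken pointwise from the additive structure of $\cA$.

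Next I would check that $M$ and $U$ are natural transformations. For a map $f: I\to J$ the codiagonal and the initial maps are natural in $\Fin$: $f\circ\nabla_I=\nabla_J\circ(f\sqcup f)$ and $f\circ e_I=e_J$. Applying the functor $T_*(A)$ to these identities, and using that $T_*(A)(f\sqcup f)=f_*\otimes f_*$ under the identification above (since $f\sqcup f$ does not mix the two blocks), turns them into the naturality squares for $M$ and $U$. The commutative monoid axioms are then verified objectwise by transporting the corresponding identities of fold maps through $T_*(A)$: associativity from $\nabla_I\circ(\nabla_I\sqcup\id_I)=\nabla_I\circ(\id_I\sqcup\nabla_I)$, unitality from $\nabla_I\circ(e_I\sqcup\id_I)=\id_I$ (up to the unitor), and commutativity from $\nabla_I\circ\tau_{I,I}=\nabla_I$, where $\tau_{I,I}: I\sqcup I\to I\sqcup I$ is the symmetry. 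These all hold because $\sqcup$ is the coproduct of $\Fin$, so every object is canonically a commutative monoid. Distributivity of $M$ over addition is automatic since $\otimes_{\cA}$ is additive in each variable.

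The main point to watch, and the place where the commutativity hypothesis on $A$ is really used, is the bookkeeping of the coherence isomorphisms in the last two steps: one must match the symmetry $\sigma$ of the pointwise tensor in $\Fun(\Fin,\cA)$ with the image $T_*(A)(\tau_{I,I})$ of the block swap, and verify $T_*(A)(f\sqcup f)=f_*\otimes f_*$. Both rest on $T_*(A)$ being a well-defined functor on all of $\Fin$, which already forces $m\circ\sigma_{A,A}=m$; indeed it is precisely the commutativity of $A$ that makes $T_*(A)$ well defined on the non-monotone morphisms of $\Fin$ and makes the multiplication $M$ commutative. Once these identifications are in place, each axiom reduces to an identity of maps in $\Fin$ and the verification is routine.
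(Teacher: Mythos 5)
Your proof is correct. The paper itself offers no argument for this lemma (it is prefaced only by ``One can check that''), so there is nothing to compare line by line; what you have written is the natural verification, and it is consistent with the paper's surrounding discussion. Your route --- defining $M_I$ as $T_*(A)(\nabla_I)$ for the fold map $\nabla_I: I\sqcup I\to I$ and $U_I$ as $T_*(A)(e_I)$ for $e_I:\emptyset\to I$, then deducing naturality and the monoid axioms by pushing identities of maps in $\Fin$ ($f\circ\nabla_I=\nabla_J\circ(f\sqcup f)$, $\nabla_I\circ\tau_{I,I}=\nabla_I$, etc.) through the functor $T_*(A)$ --- exploits that $\sqcup$ is a coproduct on $\Fin$ and cleanly isolates where commutativity of $A$ enters. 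Your diagnosis of that point is also sharper than the paper's: the Remark following the lemma locates the failure for noncommutative $A$ in the naturality square for $f:\{1,2\}\to\{1\}$, whereas you observe that already the functoriality of $T_*(A)$ forces $m\circ\sigma_{A,A}=m$, since $p\circ s=p$ in $\Fin$ for the fold $p:\{1,2\}\to\{1\}$ and the swap $s$. This mildly conflicts with the paper's framing (Definition \ref{dfn:T_*} and the Remark treat $T_*(A)$ as a well-defined object of $\Fun(\Fin,\cA)$ for an arbitrary ring object $A$), but it is not a gap in your proof of the lemma as stated, where $A$ is assumed commutative; it is merely worth flagging that the two sources of the commutativity hypothesis coincide.
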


\begin{rmk}
If $A$ is not commutative, 
then $T_*(A)$ is not a ring object in $\Fun(\Fin,\cA)$.
Assume $A$ is a ring object in $\cA$
If  $T_*(A)$ is a ring objet in $\Fun(\Fin,\cA)$,
then there is a morphism $m_*: T_*(A) \otimes T_*(A) \to T_*(A)$ 
giving a multiplicative structure on  $T_*(A)$.
It means that for any morphism $f: I \to J$ in $\Fin$
we have a commuting diagram
$$
 \xymatrix{
      T_*(A)(I) \otimes_{\cA} T_*(A)(I) \ar[rr]^(0.6){m_*(I)} \ar[d]_{T_*(A)(f) \otimes T_*(A)(f)}
  & & T_*(A)(I) \ar[d]^{T_*(A)(f)}
  \\
      T_*(A)(J) \otimes_{\cA} T_*(A)(J) \ar[rr]_(0.6){m_*(J)} 
  & & T_*(A)(J)
 }
$$ 
in $\cA$.
Consider, for example, the morphism $f:\{1,2\} \to \{1\}$.
Then the above diagram becomes
$$
 \xymatrix{
      A^{\otimes 2} \otimes A^{\otimes 2} 
      \ar[rr]^{m_*(\{1,2\})} \ar[d]_{f_* \otimes f_* = m \otimes m}
  & & A^{\otimes 2} \ar[d]^{f_* = m}
  \\
      A \otimes A \ar[rr]_{m_*(\{1\})} 
  & & A
 }
$$ 
Unless $A$ is commutative, there is no canonical way of defining $m_*$ 
such that the above diagram commutes.
\end{rmk}

In a dual way, one can consider 

\begin{dfn}
Let $\cC$ be the category $\Fin$ or $\Fine$.
For a coalgebra object $C$ in $\cA$
we define an object $T^*(C)$ in $\Fun(\cC^{op},\cA)$ 
by $T^*(C)(I) := \otimes_{i \in I} C$ for $I \in \Ob(\cC)$,
and $T^*(C)(f): T^*(C)(J)  \to T^*(C)(I)$ 
for a morphism $f: I \to J$ in $\cC$ 
in a natural way by the comultiplication and the counit of $H$. 
We sometimes use the symbol $ f^* := T^*(C)(f)$ for simplicity.
\end{dfn}

\begin{lem}
For a cocommutative coalgebra object $C$ in $\cA$,
$T^*(C)$ 
is a cocommutative coalgebra object in $\Fun(\Fin,\cA)$.
\end{lem}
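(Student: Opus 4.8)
The plan is to dualize the proof of Lemma~\ref{lem:cro} essentially verbatim, transporting the commutative ring structure on $T_*(A)$ into a cocommutative coalgebra structure on $T^*(C)$. Concretely, I would equip $T^*(C)$ with a comultiplication $\Delta_*\colon T^*(C)\to T^*(C)\otimes T^*(C)$ and a counit $\ve_*\colon T^*(C)\to{\bf 1}$, where ${\bf 1}$ denotes the unit object of $\Fun(\Fin^{op},\cA)$, namely the constant functor with value ${\bf 1}$. Both are defined objectwise: for $I\in\Ob(\Fin)$ the map $\Delta_*(I)\colon C^{\otimes I}\to C^{\otimes I}\otimes C^{\otimes I}$ is obtained by applying the comultiplication $\Delta$ of $C$ at each of the tensor factors indexed by $I$ and then using the symmetry $\sigma$ of $\cA$ to gather all the ``first'' outputs into the left copy of $C^{\otimes I}$ and all the ``second'' outputs into the right copy, while $\ve_*(I):=\bigotimes_{i\in I}\ve$.

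The substantive step is to check that $\Delta_*$ and $\ve_*$ are natural transformations, that is, genuine morphisms in $\Fun(\Fin^{op},\cA)$. For a morphism $f\colon I\to J$ in $\Fin$ the induced map $f^{*}=T^{*}(C)(f)\colon C^{\otimes J}\to C^{\otimes I}$ is assembled, fibrewise over $J$, from $\Delta$ (when several elements of $I$ are sent to one element of $J$), from $\ve$ (when an element of $J$ lies outside the image of $f$), and from $\sigma$. I would decompose $f$ into the elementary morphisms appearing in Example~\ref{eg:f}---identities, the fold maps $p$, the inclusion $\emptyset\to\{1\}$, and transpositions---and then verify the naturality square for $\Delta_*$ (and the analogous one for $\ve_*$) on the relevant tensor factors in each case, reducing the whole computation to identities inside $C$.

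The main obstacle is the case in which $f$ identifies elements, so that $f^{*}$ incorporates $\Delta$. Tracing the two ways around the naturality square then yields the same iterated comultiplication of $C$, but with its output factors grouped in two different orders that differ by a permutation; the two groupings agree precisely because $C$ is \emph{cocommutative}. The clean formulation of what is needed is that, for a cocommutative coalgebra, the iterated comultiplication $\Delta^{(n-1)}\colon C\to C^{\otimes n}$ is invariant under the full action of the symmetric group $S_n$ permuting the factors---a statement one proves by induction on $n$ from coassociativity together with cocommutativity. This is exactly dual to the role of the commutativity of $A$ in Lemma~\ref{lem:cro}, and to the obstruction exhibited in the Remark when $A$ is noncommutative. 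Granting naturality, the remaining verifications---coassociativity and the two counit laws for $\Delta_*$, and the cocommutativity of $\Delta_*$ itself---are objectwise and follow immediately at each $I$ from the corresponding axioms for $C$ and the coherence of $\sigma$, so no further global argument is required.
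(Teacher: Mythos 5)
Your argument is correct and is exactly the verification the paper leaves implicit: the paper states this lemma without proof as the formal dual of Lemma~\ref{lem:cro} (itself only asserted with ``one can check''), and your objectwise definition of $\Delta_*$ and $\ve_*$, the reduction of naturality to the elementary morphisms of Example~\ref{eg:f}, and the use of $S_n$-invariance of the iterated comultiplication of a cocommutative coalgebra to handle the folding case are precisely the intended checks. The only cosmetic point is that, as you note, the natural home of $T^*(C)$ is $\Fun(\Fin^{op},\cA)$ rather than $\Fun(\Fin,\cA)$ as written in the statement.
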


As for a bialgebra object, we have 

\begin{rmk}
For a bialgebra object $H$ in $\cA$,
we can consider $T_*(H)$ in $\Fun(\cC,\cA)$ 
and $T^*(H)$ in $\Fun(\cC^{op},\cA)$
using the algebra and coalgebra structure on $H$. 
By the axiom of bialgebra object,
for a commutative diagram
\begin{align*}
\xymatrix{
 \{1,2\} \ar[r]^{f} 
&\{1\}   
&\{1,2\} \ar[l]_{f} \ar@{=}[d]
\\
 \{1,2,3,4\} \ar[r]_{h} \ar[u]_{g}
&\{1,2,3,4\} \ar[r]_{g}
&\{1,2\}   
}
\end{align*}
in $\Fin$ with 
\begin{align*}
&f(1)=f(2)=1, \quad
 g(1)=g(2)=1,\  g(3)=g(4)=2, \quad 
 h(1)=1,\ h(2)=3,\ h(3)=2,\ h(4)=4,
\end{align*}
we have a commutative diagram
\begin{align}
\label{diag:bialg:D(ab)=D(a)D(b)}
\xymatrix{
 H^{\otimes 2} \ar[rrr]^{f_* = m } 
               \ar[d]_{g^* = \Delta \otimes \Delta}
&
&
&H
&             
&H^{\otimes 2} \ar[ll]_{f^* = \Delta} \ar@{=}[d]
\\
 H^{\otimes 4} \ar[rrr]_{h_* = \id_H \otimes \sigma_{H,H} \otimes \id_H}
&
&
&H^{\otimes 4} \ar[rr]_{g_* = m \otimes m}
&
&H^{\otimes 2}
}
\end{align}
\end{rmk}

One can introduce a module on a ring object in $\Fun(\cC,\cA)$,
although we don't write it down.
We will focus on modules of bialgebra objects 
in the following sense:

\begin{dfn}
\label{dfn:T-module}
Let $\cC$ be $\Fin$ or $\Fine$,
and $H$ be a bialgebra object in $\cA$.
Define a $T^*(H)$-module in $\Fun(\cC,\cA)$
to be an object $M$ of $\Fun(\cC,\cA)$ 
such that $M(I)$ is a module of
the ring object $T^*(H)(I)=\otimes_{i \in I}H$ 
(with component-wise multiplication)
for any $I \in \Ob(\cC)$ and 
such that the diagram
\begin{align}
\label{diag:H-mod}
\xymatrix{
 T^*(H)(I) \otimes M(I) \ar[rd]_{a(I)}
&
&T^*(H)(J) \otimes M(I) 
 \ar[ll]_{T^*(H)(f) \otimes \id_{M(I)}} 
 \ar[rr]^{\id_{M(J)} \otimes M(f)}
&
&T^*(H)(J) \otimes M(J) \ar[ld]^{a(J)}
\\
&
 M(I) \ar[rr]_{M(f)}
&
&M(J) 
}
\end{align}
in the category $\cA$
commutes for any morphism $f:I \to J$ in $\cC$.
Here the arrows $a(I)$ and $a(J)$ indicate the $H$-action on $M$,
and the $H$-action on the tensor product of modules 
is given by the comultiplication $\Delta$ of $H$ 
as usual.
\end{dfn}

If $\cA=\RM$, then the commutativity of the diagram \eqref{diag:H-mod} 
can be written as 
\begin{align}
\label{eq:H-mod}
 f_*(f^*(g).m)=g.f_*(m)
\end{align}
for any $g\in T^*(H)(J)$ 
and $m \in M(I)$, where we denoted by $.$ the $M$-action and 
$f_* =M(f)$.

\begin{eg}
\label{eg:bialgebra:module}
\begin{enumerate}
\item
For a bialgebra object $H$ in $\cA$,
the object $T_*(H)$ in $\Fun(\cC,\cA)$
is an $H$-module,
since $T_*(H)(I)=\otimes_{i \in I}H$ 
is a module of $T^*(H)(I)$ by the component-wise product, 
and since the commutativity of the diagram \eqref{diag:H-mod} 
can be checked by the bialgebra axiom.
For example, the case $f:\{1,2\} \to \{1\}$ 
follows from \eqref{diag:bialg:D(ab)=D(a)D(b)}.

\item
If $M$ is a ring object in $\cA$ 
with an action of a bialgebra object $H$,
then $T_*(M)$ is a $T^*(H)$-module in $\Fun(\cC,\cA)$.
\end{enumerate}
\end{eg}

One can check that 
$H$-modules in $\Fun(\cC,\cA)$ 
form an additive monoidal category.
Let us introduce

\begin{dfn}
Define $\Fun(\cC,\cA,T^*(H))$ 
to be the additive monoidal category 
of $T^*(H)$-modules in $\Fun(\cC,\cA)$.
\end{dfn}

If $H$ is cocommutative,
then $\Fun(\cC,\cA,T^*(H))$ becomes a symmetric monoidal category.

\begin{eg}
If $M$ is a commutative ring object in $\cA$ 
with action of a cocommutative bialgebra object $H$,
then $T_*(M)$ is a commutative ring object in $\Fun(\Fin,\cA,T^*(H))$.
\end{eg}

Remarking that one can define 
the category of modules over a commutative ring object
in an additive symmetric monoidal category,
and that it is again an additive symmetric monoidal category,
we introduce 

\begin{dfn}
Let $H$ be a cocommutative bialgebra object in $\cA$
and let $S$ be a commutative ring object in $\Fun(\cC,\cA,T^*(H))$.
Define $\Fun(\cC,\cA,T^*(H),S)$ to be 
the additive symmetric monoidal category of $S$-modules.
\end{dfn}

The letter $S$ means `singular',
and the object $S$ encodes the singular parts 
of OPEs of the fields considered.
The $(\cA,H,S)$-vertex algebra is defined to be 
a \emph{singular} commutative ring object in $\Fun(\Fin,T^*(H),S)$.
The term \emph{singular} is clarified by the following notion.

\begin{dfn}
Let $\cC$ be $\Fin$ or $\Fine$.
Let $H$ be a cocommutative bialgebra object in $\cA$
and let $S$ be a commutative ring object in $\Fun(\cC,\cA,T^*(H))$.
For objects $U_1,U_2,\ldots,U_n$ and $V$ of $\Fun(\cC,\cA,T^*(H),S)$,
define the singular multilinear map 
to be a family of maps 
$$
 U_1(I_1) \otimes_{\cA} U_2(I_2) \otimes_{\cA} \cdots
 \otimes_{\cA} U_n(I_n)
 \longto
 V(I_1 \sqcup I_2 \sqcup \cdots \sqcup I_n)
$$
for any $I_1,I_2,\ldots,I_n \in \Ob(\cC)$
satisfying the following conditions.
\begin{enumerate}
 \item
 The maps commute with the action of $T^*(H)$.
 \item
 The maps commute with the actions of $S(I_1),S(I_2),\ldots,S(I_n)$.
 \item
 For morphisms $I_1 \to I'_1$, $I_2 \to I'_2$, $\ldots$, $I_n \to I'_n$
 in $\cC$, the diagram 
 \begin{align*}
  \xymatrix{
   U_1(I_1) \otimes U_2(I_2) \otimes \cdots \otimes U_n(I_n)
   \ar[r] \ar[d]
   & V(I_1 \sqcup I_2 \sqcup \cdots \sqcup I_n) \ar[d]
   \\   
   U_1(I'_1) \otimes U_2(I'_2) \otimes \cdots \otimes U_n(I'_n)
   \ar[r]
   & V(I'_1 \sqcup I'_2 \sqcup \cdots \sqcup I'_n)
  }
 \end{align*}
 in $\cA$ commutes.
\end{enumerate}
\end{dfn}

Since we assumed 
that $\cA$ is cocomplete and colimits commute with tensor products,
the singular multilinear maps are representable.
Thus the following definition makes sense.

\begin{dfn}
For objects $U_1,U_2,\ldots,U_n$ of $\Fun(\cC,\cA,T^*(H),S)$,
the singular tensor product 
$U_1 \odot U_2 \odot \cdots \odot U_n$ 
is the object 
in $\Fun(\cC,\cA,T^*(H),S)$
representing the singular multilinear maps
$U_1(I_1) \otimes \cdots  \otimes  U_n(I_n) \to
 V(I_1 \sqcup \cdots \sqcup I_n)$.
\end{dfn}

The singular tensor product can be expressed explicitly as  
$$
 (U_1 \odot U_2 \odot \cdots \odot U_n)(I)
 := \varinjlim_{ \bigsqcup_{i=1}^n I_i\to I }
 (U_1(I_1) \otimes U_2(I_2)\otimes \cdots  \otimes  U_n(I_n))
 \bigotimes_{S(I_1)\otimes S(I_2) \otimes \cdots \otimes S(I_n)}
 S(I),
$$
where the colimit is taken over the following category.
An object $\bigsqcup_{i=1}^n I_i\to I$
consists of $I_1,I_2,\ldots,I_n \in \Ob(\cC)$
with a morphism from 
$I_1 \sqcup I_2 \sqcup \cdots \sqcup I_n $ to $I$
in $\cC$,
and a morphism from $\bigsqcup_{i=1}^n I_i\to I$ 
to $\bigsqcup_{i=1}^n I'_i\to I$ 
consists of morphisms $I_i \to I'_i$ ($i=1,2,\ldots,n$)
making the diagram
$$
\xymatrix@R=3ex{
 I_1 \sqcup I_2 \sqcup \cdots \sqcup I_n  \ar[r] \ar[d]
&I \ar@{=}[d]
\\
 I'_1 \sqcup I'_2 \sqcup \cdots \sqcup I'_n  \ar[r] 
&I
}
$$
in $\cC$ commutative.

One can check that the category appearing above
is a filtered (in the sense of \cite[Chap. IX]{M}) small category,
so that the colimit is in fact the filtered inductive limit
(or the direct limit).

For $\cC=\Fin$, the disjoint union $\sqcup$ is a coproduct,
which implies that the singular tensor product $\odot$ 
is the same as the ordinary tensor product $\otimes$.

By the definition of $\odot$,
there is a canonical morphism from $U_1 \odot U_2$ 
to $U_1 \otimes U_2$,
so that any ring object automatically 
has another ring structure with multiplication given by 
singular tensor products.
Thus the following definition makes sense.

\begin{dfn}
A singular ring object in $\Fun(\Fine,\cA,T^*H,S)$
is a ring object whose multiplicative structure 
is given by the singular tensor product $\odot$.
\end{dfn}

A ring object $S$ in $\Fun(\Fine,\cA,T^*(H))$ 
can be seen as a ring object in $\Fun(\Fin,\cA,T^*(H))$ 
by restriction under 
the embedding \eqref{eq:Fin:embed} of $\Fin$ into $\Fine$.
Then we can embed the category $\Fun(\Fin,\cA,T^*(H),S)$
into $\Fun(\Fine,\cA,T^*(H),S)$ by defining
\begin{align}
\label{eq:V:ext}
 V(I_1:I_2:\cdots:I_n)
 := V(I_1 \sqcup I_2 \sqcup \cdots \sqcup I_n)
    \bigotimes_{S(I_1) \otimes S(I_2) \otimes \cdots \otimes S(I_n)}
    S(I_1:I_2:\cdots:I_n)
\end{align}
for $V$ in $\Fun(\Fin,\cA,T^*(H),S)$.
Here $I_1:I_2:\cdots:I_n$ is an object of $\Fine$,
which is the disjoint union of $I_j$'s as a set,
and where the equivalence relation is defined 
so that each $I_j$ is the equivalence class.
For example, for $I_1=\{1\}$ and $I_2=\{1,2\}$,
we have $I_1:I_2=\{1:2,3\}$.

Thus the following definition makes sense.

\begin{dfn}
\label{dfn:scom}
A singular commutative ring object 
in $\Fun(\Fin,\cA,T^*H,S)$ 
is an object 
such that its extension \eqref{eq:V:ext} 
gives a singular commutative ring object 
in $\Fun(\Fine,\cA,T^*H,S)$.
\end{dfn}

Now we can introduce the main object.

\begin{dfn}
Let $\cA$ be an additive symmetric monoidal category,
$H$ be a cocommutative bialgebra object in $\cA$,
and $S$ be a commutative ring object in 
the additive symmetric monoidal category $\Fun(\Fine,\cA,T^*(H))$.
Define an $(\cA,H,S)$-vertex algebra 
to be a singular commutative ring in $\Fun(\Fin,\cA,T^*(H),S)$.
\end{dfn}

An $(\cA,H,S)$-vertex algebra $V$ is thus an object in 
$\Fun(\Fin,\cA)$,
although we often consider it as an object in 
$\Fun(\Fine,\cA)$ by the extension \eqref{eq:V:ext}.


\subsection{Relation to ordinary vertex algebras}
\label{subsec:RM}

Let $R$ be a commutative ring.
In the case $\cA=\RM$,
one can consider the following bialgebra.

\begin{dfn}
\label{dfn:FG}
Let $\FG$ be the commutative cocommutative bialgebra 
over $R$ with basis 
$\{ D^{(i)} \mid i \in \bb{Z}_{\ge0}\}$,
multiplication 
$D^{(i)}D^{(j)}=\binom{i+j}{i}D^{(i+j)}$
and comultiplication 
$\Delta(D^{(i)})=\sum_{j=0}^{i}D^{(i)} \otimes D^{(i-j)}$.
\end{dfn}

$\FG$ is the formal group ring of the one-dimensional 
additive formal group
(corresponding to the formal group law 
 $F(X,Y)= X+Y$).
Symbolically one has $D^{(i)}=D^i/i!$.

An important example for a commutative ring object $S$ 
in $\Fun(\Fine,\RM,T^*(\FG))$ is 

\begin{dfn}
\label{dfn:Se}
Define an object $\Se$ in $\Fun(\Fine,\RM)$ by 
\begin{align}
\label{eq:Se}
 \Se(I) := R[(x_i-x_j)^{\pm1} \mid i\not\equiv j \text{ in } I]
\end{align}
for $I \in \Ob(\Fine)$,
and 
$$
 \Se(f):\Se(I) \longto \Se(J), \quad 
 (x_i -x_j) \longmapsto (x_{f(i)}-x_{f(j)})
$$ 
for $f \in \Fine(I,J)$.
\end{dfn}

One can easily check that $\Se$ is indeed an object of $\Fun(\Fine,\RM)$.
One further has 

\begin{lem}
\label{lem:Se}
$\Se$ is a commutative ring object
in $\Fun(\Fine,\RM,T^*(\FG))$,
where the action of $\FG$ on $\Se$ is given by 
the derivation.
More explicitly, one has 
$D^{(i)}(x^m) = \binom{m}{i}x^{m-i}$.
\end{lem}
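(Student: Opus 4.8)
The plan is to verify the two assertions of Lemma \ref{lem:Se} separately: first that $\Se$ is a commutative ring object in $\Fun(\Fine,\RM,T^*(\FG))$ (i.e.\ a commutative ring object in $\Fun(\Fine,\RM)$ that is also a $T^*(\FG)$-module compatibly), and second that the $\FG$-action by derivation is well defined and satisfies the displayed formula $D^{(i)}(x^m)=\binom{m}{i}x^{m-i}$. For the ring-object structure, I would note that each $\Se(I)=R[(x_i-x_j)^{\pm1}\mid i\not\equiv j]$ is a commutative $R$-algebra, so the multiplication and unit maps are the obvious ones on each object $I$; what must be checked is that the transition maps $\Se(f)$ are $R$-algebra homomorphisms and that they assemble the multiplication $m$ and unit $u$ into natural transformations. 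This is immediate because $\Se(f)$ is defined as the substitution $(x_i-x_j)\mapsto(x_{f(i)}-x_{f(j)})$, which respects products of the generators; one only needs that $f$ preserves inequivalence (which is exactly the definition of a morphism in $\Fine$) so that $x_{f(i)}-x_{f(j)}$ remains invertible in the target whenever $x_i-x_j$ is invertible in the source. Thus $\Se$ is functorially a commutative ring object in $\Fun(\Fine,\RM)$.

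Next I would define the $\FG$-action. Since $T^*(\FG)(I)=\FG^{\otimes I}$ acts component-wise, it suffices to specify how a single copy of $\FG$ acts through the variable $x_i$, and the natural choice is to let $D=D^{(1)}$ act as the formal derivative $\partial_{x_i}$ and, more generally, $D^{(i)}$ as the divided power $\partial_{x_i}^{i}/i!$. The first task is to check this is well defined on the localized ring $\Se(I)$: a derivation on a polynomial ring extends uniquely to any localization, and the divided powers $D^{(i)}=\partial^{i}/i!$ make sense over an arbitrary commutative ring $R$ because $\partial^{i}(x^m)=m(m-1)\cdots(m-i+1)x^{m-i}$ is always divisible by $i!$, giving $D^{(i)}(x^m)=\binom{m}{i}x^{m-i}$; this establishes the explicit formula. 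I would then verify that these operators satisfy the defining relations of $\FG$ from Definition \ref{dfn:FG}, namely $D^{(i)}D^{(j)}=\binom{i+j}{i}D^{(i+j)}$, which is the standard divided-power identity, and that the action of $\Delta(D^{(i)})=\sum_{j}D^{(j)}\otimes D^{(i-j)}$ is compatible with the product in $\Se(I)$ via the Leibniz rule (this is precisely the statement that $D^{(i)}$ acts as a higher derivation, i.e.\ $D^{(i)}(ab)=\sum_{j}D^{(j)}(a)D^{(i-j)}(b)$).

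The heart of the matter, and the step I expect to be the main obstacle, is verifying the compatibility diagram \eqref{diag:H-mod} for every morphism $f\colon I\to J$ in $\Fine$, equivalently the identity \eqref{eq:H-mod}, namely $f_*(f^*(g).m)=g.f_*(m)$ for $g\in T^*(\FG)(J)$ and $m\in\Se(I)$. Geometrically this says that the substitution map $\Se(f)$ intertwines differentiation before and after relabeling variables. I would reduce to the generators $D^{(i)}$ of a single tensor factor indexed by $j\in J$, where $f^*$ redistributes the derivation at $x_j$ to the sum of derivations at the variables $x_i$ with $f(i)=j$, in accordance with the comultiplication $\Delta$; the required equality $\Se(f)\bigl(\sum_{f(i)=j}\partial_{x_i}\,m\bigr)=\partial_{x_j}\,\Se(f)(m)$ is then exactly the chain rule for the substitution $x_i\mapsto x_{f(i)}$, since $\partial x_{f(i)}/\partial x_j$ is $1$ when $f(i)=j$ and $0$ otherwise. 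Extending from $D^{(1)}$ to the divided powers $D^{(i)}$ and from single tensor factors to the full component-wise action is then a bookkeeping argument using the higher Leibniz rule and the multiplicativity of $\Se(f)$. Because all maps involved are $R$-algebra homomorphisms and the relevant identities are polynomial, it suffices to check them on the generators $x_i-x_j$, which makes the verification finite and mechanical once the chain-rule computation above is in place.
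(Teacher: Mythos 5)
Your proof is correct; the paper states Lemma \ref{lem:Se} without proof (``one can easily check\ldots one further has''), and your verification supplies exactly the routine checks being left to the reader: the substitution maps $\Se(f)$ are algebra homomorphisms because morphisms in $\Fine$ preserve inequivalence, the divided powers $D^{(i)}=\partial^i/i!$ are integral hence defined over any $R$ and satisfy the relations of $\FG$ with the higher Leibniz rule giving compatibility with the product, and the identity \eqref{eq:H-mod} reduces via the comultiplication to the chain rule for the linear substitution $x_i\mapsto x_{f(i)}$ checked on the generators $(x_i-x_j)^{\pm1}$. There is nothing to compare against in the paper, and no gap in your argument.
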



Let $V$ be an $(\RM,\FG,\Se)$-vertex algebra.
It is an object of $\Fun(\Fin,\RM,T^*(\FG),\Se)$,
so $V(I)$ is just an $R$-module for each $I \in \Ob(\Fin)$.
Let us look at the definition of singular tensor product 
for two $V$'s:
$$
 (V \odot V)(I)
 = \varinjlim_{\sqcup_{i=1}^2  I_i \to I}
  \bigl(V(I_1) \otimes V(I_2)\bigr) \bigotimes_{\Se(I_1) \otimes \Se(I_2)} \Se(I).
$$ 
Fix objects $I_1,I_2 \in \Ob(\Fin)$ and
take arbitrary elements $v_1 \in V(I_1)$ and $v_2 \in V(I_2)$.
The ordinary product $v_1 v_2$ is defined in $V(I_1 \sqcup I_2$).
By Definition \ref{dfn:scom} and the extension \eqref{eq:V:ext},
the singular tensor product $v_1 \odot v_2 $ 
is defined in $ (V \odot V)(I_1:I_2) \subset V(I_1:I_2)$
with $V(I_1:I_2)=V(I)\otimes_{\Se(I_1) \otimes \Se(I_2)}\Se(I_1:I_2)$,
and the singular commutativity of $V$ means 
$v_1 \odot v_2 = v_2 \odot v_1$ in $V(I_1:I_2)$.

In particular, setting $I_1=\{1\}$ and $I_2=\{2\}$, 
we have $I= I_1 \sqcup I_2 = \{1,2\}$ and $I_1:I_2=\{1:2\}$,
so that $S(I_1)=S(I_2)=R$ and $S(I_1:I_2)=R[(x_1-x_2)^{\pm1}]$,
hence we have $ V(I_1:I_2)=V(\{1,2\})[(x_1-x_2)^{\pm1}]$
and in this module the equation $v_1 \odot v_2 = v_2 \odot v_1$ holds.

Now we can recall the following main theorem in \cite{B:2001}:

\begin{fct}[{\cite[Theorem~4.3]{B:2001}}]
\label{fct:Borcherds}
Let $V$ be an $(\RM,\FG,\Se)$-vertex algebra.
Then  $V(\{1\})$ has a structure of ordinary vertex algebra 
over the ring $R$.
\end{fct}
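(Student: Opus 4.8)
The plan is to read off each datum of an ordinary vertex algebra from the $\RHS$-structure on $V$ and then verify the axioms, reserving locality/the Jacobi identity as the genuinely hard step. The underlying $R$-module is $\mathcal{V} := V(\{1\})$. The vacuum $\vac$ is the image of $1$ under the unit morphism of the singular ring $V$ evaluated at $\{1\}$ (recall that on $\Fin$ one has $\Se(I)=R$, so $V(\{1\})$ is just an $R$-module). The translation operators are supplied by the $\FG$-action: the basis elements $D^{(i)}$ act on $\mathcal{V}$, and I set $D := D^{(1)}$, so that $e^{zD}=\sum_{i\ge 0}D^{(i)}z^i$ since $D^{(i)}=D^i/i!$.

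Next I would construct the state--field correspondence. Given $u,v\in\mathcal{V}$ (viewing the second copy as $V(\{2\})=V(\{1\})$), the singular product $u\odot v$ lives, via the extension \eqref{eq:V:ext}, in $V(\{1:2\})=V(\{1,2\})[(x_1-x_2)^{\pm1}]$, and since this is a tensor product over $R$ it is a \emph{finite} Laurent polynomial in $t:=x_1-x_2$. I would define $Y(u,z)v$ by collapsing the two points through $V(p)$ for the surjection $p:\{1,2\}\to\{1\}$, while using the translation action $e^{zD}$ of $\FG$ to record the relative coordinate $z=x_1-x_2$; this lands in $\mathcal{V}((z))$, and the $n$-th products $u_{(n)}v$ are its coefficients. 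Two features are then built in automatically: the truncation $u_{(n)}v=0$ for $n\gg 0$ (only finitely many negative powers of $t$ occur, because $u\odot v$ is a Laurent polynomial), while the regular tail in $z$ is generated by the exponential of $D$. As a sanity check, the commutative case $V=T_*(M)$ recovers $Y(u,z)v=(e^{zD}u)\cdot v$.

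The vacuum and translation axioms should then be routine. The identities $Y(\vac,z)=\id$ and $Y(u,z)\vac=e^{zD}u\in\mathcal{V}[[z]]$ (so $Y(u,z)\vac|_{z=0}=u$) follow from the unit axioms of the singular ring together with the construction above, and $D\vac=0$ because the unit is translation-invariant. The translation covariance $[D,Y(u,z)]=\partial_z Y(u,z)$ follows by combining two inputs: the compatibility of the $\FG$-action with the singular product, i.e.\ the module diagram \eqref{diag:H-mod}/\eqref{eq:H-mod} together with the cocommutative comultiplication $\Delta(D^{(i)})=\sum_j D^{(j)}\otimes D^{(i-j)}$ (which yields the Leibniz rule for $D$ on products), and the derivation action of $\FG$ on $\Se$ from Lemma \ref{lem:Se} (which turns translation of the coordinate $t$ into $\partial_z$).

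The remaining and genuinely hard axiom is locality / the Borcherds--Jacobi identity, and here I would extract it from the two defining properties of the singular commutative ring: singular commutativity $u\odot v=v\odot u$ in $V(\{1:2\})$ and associativity $(u\odot v)\odot w=u\odot(v\odot w)$ in $V(\{1:2:3\})$. The difficulty is to translate these categorical equalities, which live in the three-point localization $V(\{1:2:3\})$ over $\Se(\{1:2:3\})=R[(x_i-x_j)^{\pm1}]$, into the standard formal-variable identity. The crux is the dictionary between, on one hand, the collapse morphisms in $\Fine$ (e.g.\ $\{1,2,3\}\to\{1,2\}\to\{1\}$ in its various orders) and, on the other hand, the two inequivalent expansions of $(x_1-x_3)^{-1}$: these correspond to two distinct localization/restriction maps out of $\Se(\{1:2:3\})$, and associativity of $\odot$ forces their difference to be the formal $\delta$-function contribution at the heart of the Jacobi identity. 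Concretely I would either assemble the Borcherds identity directly from the three-point associativity and commutativity, or---more economically---establish weak locality $(x_1-x_2)^N[Y(u,x_1),Y(v,x_2)]=0$ (from commutativity of $\odot$ and the pole bound) together with associativity, and then invoke the standard reconstruction theorem that vacuum, translation covariance, and locality imply the full vertex algebra axioms. I expect the main obstacle to be precisely this formal-variable/localization bookkeeping: one must unwind the filtered colimit defining $\odot$ and the functoriality over $\Fine$ carefully enough to see that associativity of the singular product yields operator-product associativity with the correct pole structure, and that the two expansion regions are matched correctly.
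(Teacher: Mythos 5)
Your proposal is correct and follows essentially the same route as the paper: the vacuum is the unit $1\in V(\{1\})$, the translation is $T=D^{(1)}$, the vertex operator is obtained by Taylor-expanding via the $\FG$-action (the map \eqref{eq:taylor}) combined with the extension \eqref{eq:V:ext}, and locality is read off from the singular commutativity $u_1\odot u_2=u_2\odot u_1$ together with the finite pole order in $(x_1-x_2)$. Note that since Definition \ref{dfn:ova} only demands vacuum, translation and locality (not the full Borcherds--Jacobi identity), the ``more economical'' option you describe at the end is precisely the paper's argument, and the three-point associativity bookkeeping you flag as the main obstacle is not actually needed.
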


For the sake of completeness, 
let us write down the axiom of ordinary vertex algebra here.

\begin{dfn}\label{dfn:ova}
An ordinary vertex algebra defined over a commutative ring $R$
is a colletion of data 
\begin{itemize}
\item
(space of fields)
an $R$-module $V$
\item
(vacuum)
an element $\vac \in V$ 
\item
(translation)
an $R$-linear operator $T: V \to V$ 
\item
(vertex operators)
an $R$-linear operation 
$Y(\;,z): V \to \End\bigl( V \bigr)[[z^{\pm1}]]$ 
\end{itemize}
satisfying the following axioms.
\begin{itemize}
\item
(vacuum axiom)
$Y(\vac,z)=\id_V$ and 
$Y(A,z)\vac \in A + z R[[z]]$ for any $A \in V$.
\item
(translation axiom)
$[T,Y(A,z)]=\partial_z Y(A,z)$ for any $A \in V$
\item
(locality axiom)
$\{Y(A,z)\mid A \in V\}$ are mutually local,
that is, 
for any $A,B \in V$ there exists $N \in \bb{Z}_{\ge0}$ such that 
$(z-w)^n[Y(a,z),Y(b,w)]=0$ as a formal power series in 
$\End\bigl( V \bigr) [[z^{\pm1},w^{\pm1}]]$.
\end{itemize}
\end{dfn}

Let us sketch the proof of Fact~\ref{fct:Borcherds} briefly.
For details see \cite[Proof of Theorem 4.3]{B:2001}.
\cite[(4.3) Proof of Theorem 1]{P} also gives a nice demonstration.

\begin{proof}[Proof of Factt~\ref{fct:Borcherds}]
The vacuum $\vac$ is defined to be $1$ in the $R$-algebra $V(\{1\})$.

The translation $T$ is defined by the action of $\FG$ on $V(\{1\})$.
In other words, $T := D^{(1)}$.

We want to make an $R$-linear map 
$$
 Y(\;,x_1): V(\{1\}) \to \End_R\bigl( V(\{1\}) \bigr) [[x_1]][x_1^{-1}]
$$ 
satisfying the axiom of ordinary vertex algebra.
For $u_1,u_2 \in V(\{1\})$,
we have $u_1 \odot u_2 = u_2 \odot u_1$ in 
$V(\{1:2\})=V(\{1,2\})[(x_1-x_2)^{\pm1}]$
as remarked in the paragraph before Fact \ref{fct:Borcherds}.
Recalling the action of $\FG$ on $\Se$,
we may consider the ``Taylor series expansion''
\begin{align}
\label{eq:taylor}
 V(\{1,2\}) \longto V(\{1\})[[x_1,x_2]],
 \quad
 w \longmapsto \sum_{i,j\ge0} f_*( D_1^{(i)}D_2^{(j)} w) x_1^i x_2^j,
\end{align}
where $f: \{1,2\} \to \{1\}$ is a morphism in $\Fin$ and 
$D_1,D_2$ indicate the two different actions of $\FG$ on $V(\{1,2\})$.
Combining this expansion with the extension \eqref{eq:V:ext},
we have an $R$-linear map from $V(\{1,2\})[(x_1 - x_2)^{\pm 1}]$ 
to $V(\{1\})[[x_1,x_2]][(x_1-x_2)^{-1}]$,
and we denote the image of $u_1 \odot u_2$ under this map 
by $u_1(x_1)u_2(x_2)$.
Then define $Y(v_1,x_1)$ by 
$$
 u_2 \longmapsto u_1(x_1)u_2(0) \in  
 \left. 
  V(\{1\})[[x_1,x_2]][(x_1-x_2)^{-1}]
 \right|_{x_2=0} = 
 V(\{1\})[[x_1]][x_1^{-1}].
$$

As for the check of vertex algebra axioms,
the most non-trivial part is the locality axiom,
which is a consequence of the singular commutativity 
$u_1 \odot u_2 = u_2 \odot u_1$.
Indeed, the singular commutativity 
implies 
$$
 (x_1-x_2)^N (u_1(x)u_2(x)-u_2(x)u_1(x))u_3 = 0
$$
with some $N$, which depends only on $u_1$ and $u_2$.
This is nothing but the locality.

The translation axiom comes from the action of $\FG$.
The vacuum axiom is the consequence of 
the singular commutativity with respect to 
$A \in V(\{1\})$ and $1 \in V(\{1\})$.
We omit the detailed discussion.
\end{proof}

\begin{dfn}
For an  $\AHS$-vertex algebra $V$,
$V(\{1\})$ is called 
the ordinary vertex algebra associated to $V$.
\end{dfn}

$\AHS$-vertex algebras form an abelian category.
Moreover they form a symmetric monoidal category under 
the tensor product $\otimes$.
These structures induces the same ones on 
the ordinary vertex algebras,
which are described in \cite[\S1.3]{FB} for example.

\begin{rmk}
As mentioned in \cite[Example 4.9]{B:2001},
$\Fun(\Fin,\cA,T^*(H),S)$ is not closed 
under the singular tensor product $\odot$,
so that one should consider $\otimes$ for the monoidal structure 
on $\AHS$-vertex algebras.
\end{rmk}

\subsection{Twisting construction}
\label{subsec:twist}

In the next section
we will reconstruct several $\AHS$-vertex algebras,
to which the associated ordinary vertex algebras 
are well-known ones:
Heisenberg algebras, affine Kac-Moody Lie algebras,
the lattice vertex algebras and so on.
For this purpose,
let us recall the twisted group construction of 
$(\cA,H,S)$-vertex algebra,
which was introduced in \cite{B:2001} and 
investigated in detail in \cite{P}.

\begin{dfn}
Let $R$ be a commutative ring, 
$M$ and $N$ be bialgebras over $R$, 
and $S$ be a commutative algebra over $R$.

\begin{enumerate}
\item
A bimultiplicative map from $M \otimes_R N$ to $S$ 
is an $R$-linear map $r: M \otimes_R N \to S$ such that 
\begin{align*}
 r(a\otimes 1) &= \ve_M(a), \qquad
 r(1\otimes a)  = \ve_N(a),
\\
 r(ab \otimes c) &=\sum r(a\otimes c')r(b\otimes c''),
\\
 r(a \otimes b c) &=\sum r(a' \otimes b)r(a'' \otimes c)
\end{align*}
hold for any $a,b,c \in M$.

\item
A bimultiplicative map on $M\otimes_R M$ to $S$ 
is called an $S$-valued bicharacter.

\item
A bicharacter $r$ is called symmetric if 
$$
 r(a \otimes b )= r(b \otimes a)
$$ 
holds for any $a, b \in M$.
\end{enumerate}
\end{dfn}

The following lemma is due to \cite[Lemma/Definition 2.6]{B:2001},
where $M$ is assumed to be commutative.

\begin{lem}
Suppose $r$ is an $S$-valued bicharacter 
of a cocommutative bialgebra $M$ over $R$.
\begin{enumerate}
\item
The operation
\begin{align}
\label{eq:twisted_product}
 a \circ_r b := \sum a' b' r(a''\otimes b'')
\end{align}
defines a unital associative algebra $(M \otimes_R S,\circ_r,1_M)$ over $R$,
where $1_M$ is the unit of the original algebra structure on $M$.
\item
If $r$ is symmetric and $M$ is commutative,
then the new algebra $(M \otimes_R S,\circ_r,1_M)$ 
is commutative.
\end{enumerate}
\end{lem}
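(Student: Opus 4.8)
The plan is to render the product $\circ_r$ explicit on the whole of $M \otimes_R S$ and then to verify, in turn, unitality, associativity, and --- under the extra hypotheses of part (2) --- commutativity. I extend \eqref{eq:twisted_product} $R$-bilinearly by setting $(x \otimes s) \circ_r (y \otimes t) := \sum x' y' \otimes st\, r(x'' \otimes y'')$ for $x,y \in M$ and $s,t \in S$, so that the $S$-factors simply multiply together with the bicharacter value; the candidate unit is $1_M \otimes 1_S$, which I abbreviate to $1_M$. Throughout I use coassociativity to write the twice-iterated coproduct as $\sum a^{(1)} \otimes a^{(2)} \otimes a^{(3)}$.

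\emph{Unitality} is the quickest step. Because $M$ is a bialgebra, $\Delta(1_M) = 1_M \otimes 1_M$, so $1_M \circ_r a = \sum 1_M a'\, r(1_M \otimes a'') = \sum a'\, \ve_M(a'') = a$, where the middle equality uses $r(1 \otimes a) = \ve_M(a)$ and the last is the counit axiom; the identity $a \circ_r 1_M = a$ follows symmetrically from $r(a \otimes 1) = \ve_M(a)$.

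\emph{Associativity} is the substantial step, and the place where I expect the real work to lie. I would expand both groupings into Sweedler form. Using that $\Delta$ is an algebra homomorphism together with coassociativity, $(a \circ_r b) \circ_r c$ becomes $\sum a^{(1)} b^{(1)} c^{(1)}\, r(a^{(2)} b^{(2)} \otimes c^{(2)})\, r(a^{(3)} \otimes b^{(3)})$, and multiplicativity of $r$ in its first argument turns the mixed factor into $\sum r(a^{(2)} \otimes c^{(2)})\, r(b^{(2)} \otimes c^{(3)})$, yielding a sum of three bicharacter values pairing the copies $(a,c)$, $(b,c)$ and $(a,b)$. Expanding $a \circ_r (b \circ_r c)$ in the same fashion and applying multiplicativity of $r$ in its second argument produces a sum whose three bicharacter values pair $(a,b)$, $(a,c)$ and $(b,c)$. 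Both sums thus realise the same three pairings and differ only in which Sweedler leg of each element is fed into which pairing. The decisive point --- and the main obstacle --- is to reconcile these two labelings: since $S$ is commutative the three scalar factors may be reordered at will, and since $M$ is \emph{cocommutative} the coproduct $\sum a^{(1)} \otimes a^{(2)} \otimes a^{(3)}$ is invariant under permuting its tensor legs (for our purposes it suffices that it is invariant under transposing the second and third legs, which is immediate from cocommutativity), and likewise for $b$ and $c$. Relabeling the summation indices by the appropriate transpositions of the second and third legs then matches the two expressions term by term. I would stress that only cocommutativity of $M$, and not its commutativity, enters here, which is precisely what permits the generalization beyond the commutative hypothesis of \cite[Lemma/Definition~2.6]{B:2001}.

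Finally, for part (2), commutativity is immediate: $b \circ_r a = \sum b' a'\, r(b'' \otimes a'') = \sum a' b'\, r(a'' \otimes b'') = a \circ_r b$, the middle equality using commutativity of $M$ on the $M$-factor and symmetry of $r$ on the scalar factor.
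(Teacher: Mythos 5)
Your proof is correct and follows essentially the same route as the paper's: expand both groupings via the iterated coproduct, use that $\Delta$ is an algebra map together with the multiplicativity of $r$ to reduce each side to a product of three bicharacter pairings, and then invoke cocommutativity of $M$ (plus commutativity of $S$) to match the two labelings of Sweedler legs. You additionally spell out the unitality, the extension of $\circ_r$ to all of $M\otimes_R S$, and part (2), which the paper omits but which are routine.
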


\begin{proof}
We only indicate the proof of the associativity.
On one side we have
\begin{align*}
(a \circ_r b) \circ_r c
&=\bigl( \sum a' b' r(a'' \otimes b'') \bigr) \circ_r c
\\
&=\sum (a' b')' c' r((a'b')''\otimes c'') r(a'' \otimes b'') 
\\
&=\sum (a' b') c' r(a'' b'' \otimes c'') r(a''' \otimes b''') 
\\
&=\sum (a' b') c' r(a'' \otimes c'') r(b'' \otimes c''') r(a''' \otimes b'''), 
\end{align*}
where in the third line we used the notation
$\bigl((\Delta \otimes 1)\circ\Delta\bigr) (a) 
= \sum a' \otimes a'' \otimes a'''$.
On the other side we have
\begin{align*}
a \circ_r (b \circ_r c) 
&=a \circ_r \bigl( \sum b' c' r(b'' \otimes c'') \bigr) 
\\
&=\sum a'(b' c')' r(a'' \otimes (b'c')'') r(b'' \otimes c'') 
\\
&=\sum a'(b' c') r(a'' \otimes (b''c'') r(b''' \otimes c''')
\\
&=\sum a'(b' c') r(a'' \otimes b'') r(a''' \otimes c'') r(b''' \otimes c''').
\end{align*}
Since $M$ is an associative algebra, we have
$$
(a' b') c' = a'(b' c').
$$
Since $M$ is a cocommutative coalgebra, we have
$$
\sum (a' b') c' r(a'' \otimes c'') r(b'' \otimes c''') r(a''' \otimes b''')
=
\sum (a' b') c' r(a'' \otimes b'') r(a''' \otimes c'') r(b''' \otimes c''').
$$
Therefore we have the conclusion.
\end{proof}

\begin{dfn}
The algebra $(M \otimes_R S,\circ_r,1_M)$ constructed in the previous Lemma
is called the twisting of $M$ by $r$
and denoted by $\wt{M}$ or $M^r$.
\end{dfn}

As in \S\ref{subsec:RM}, 
we will consider the case 
where $M$ has an action
of a cocommutative coalgebra (or bialgebra) $H$.
There is a a universal ring with $H$-action in the following sense.

\begin{fct}[{\cite[Lemma/Definition 2.10]{B:2001}}]
Suppose $M$ is an $R$-algebra and $H$ is an $R$-coalgebra.
Then there is a universal $R$-algebra $H(M)$ such that 
there is a map 
$$
 H \otimes M \longto H(M),\quad
 h \otimes m \mapsto h(m)
$$
with
$$
 h(mn)= \sum h'(m)h''(n),\quad
 h(1)=\ve_H(h).
$$
If $M$ is commutative and $H$ is cocommutative,
then $H(M)$ is commutative.
If $H$ is a bialgebra, then $H$ acts on the algebra $H(M)$.
If $M$ is a bialgebra, then $H(M)$ is also a bialgebra.
\end{fct}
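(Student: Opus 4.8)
The plan is to realize $H(M)$ by explicit generators and relations and then to read off every assertion from the resulting universal property. First I would form the free (tensor) $R$-algebra $T_R(H \otimes_R M)$ on the $R$-module $H \otimes_R M$, write $h(m)$ for the image of the generator $h \otimes m$, and let $\mathcal{I}$ be the two-sided ideal generated by the elements $h(mn) - \sum h'(m)h''(n)$ and $h(1) - \ve_H(h)\,1$ for all $h \in H$ and $m,n \in M$. Setting $H(M) := T_R(H \otimes_R M)/\mathcal{I}$ with the evident $R$-linear map $h \otimes m \mapsto h(m)$, the universal property is immediate: given any $R$-algebra $A$ and an $R$-linear map $H \otimes_R M \to A$ satisfying the two displayed identities (a measuring, in Sweedler's terminology), freeness of the tensor algebra gives a unique algebra homomorphism $T_R(H \otimes_R M) \to A$, and the two identities are exactly the condition that it annihilate $\mathcal{I}$ and hence factor uniquely through $H(M)$. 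Conceptually this exhibits $H(M)$ as the free $H$-module algebra on $M$, i.e.\ as left adjoint to the forgetful functor from $H$-module algebras to $R$-algebras, and the remaining assertions I would each obtain by feeding a suitable target algebra into this universal property.

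For the $H$-action when $H$ is a bialgebra, I would define an operator for each $g \in H$ by $g \cdot h(m) := (gh)(m)$ on generators and propagate it through products by the module-algebra rule $g \cdot (ab) := \sum g' \cdot a \; g'' \cdot b$ together with $g \cdot 1 := \ve_H(g)\,1$. The only thing to verify is that this respects the ideal $\mathcal{I}$: applied to $h(mn) - \sum h'(m)h''(n)$ both sides yield $\sum (g'h')(m)(g''h'')(n)$ once one uses that $\Delta_H$ is an algebra homomorphism, while the unit relation is immediate from $\ve_H$. One then checks on generators that $g \mapsto g\cdot(-)$ is an algebra action making $H(M)$ an $H$-module algebra, which suffices because both sides of each axiom are determined by their values on generators.

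When $M$ is itself a bialgebra I would equip $H(M)$ with a coalgebra structure by the same device: I would manufacture a measuring $H \otimes_R M \to H(M) \otimes_R H(M)$ out of $\Delta_M$, $\Delta_H$ and the two structure maps into the tensor factors, so that the universal property produces an algebra homomorphism $\Delta_{H(M)}: H(M) \to H(M) \otimes_R H(M)$, with the counit obtained likewise from $\ve_M$. Coassociativity, counitality and compatibility with the product would then follow from the corresponding identities in $M$ and $H$ via the uniqueness clause, reducing each bialgebra axiom to an equality of two measurings that agree on generators; the coordination of the two comultiplications is where I expect cocommutativity of $H$ to be needed, exactly as in the main example $H=\FG$.

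The step I expect to be the main obstacle is the commutativity claim, as it is the only assertion using both that $M$ is commutative and that $H$ is cocommutative. My approach would be to apply the universal property to the opposite algebra $H(M)^{op}$: writing its product as reversal, the map $h \otimes m \mapsto h(m)$ is a measuring into $H(M)^{op}$ precisely because the required identity $h(mn) = \sum h''(n)h'(m)$ holds, where $h(mn) = h(nm) = \sum h'(n)h''(m) = \sum h''(n)h'(m)$, the first equality using commutativity of $M$, the second being the measuring relation, and the last using cocommutativity of $H$ to interchange the two legs of $\Delta_H(h)$. This produces a canonical algebra homomorphism $H(M) \to H(M)^{op}$ fixing the generators; the heart of the matter is to upgrade this symmetry, together with the commutativity already present in the image of $M$, to genuine commutativity of $H(M)$. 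This is the delicate point, and here I would follow the argument of \cite{B:2001} rather than reprove it from scratch.
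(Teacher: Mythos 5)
Your construction of $H(M)$ as $T_R(H\otimes_R M)/\mathcal{I}$ is exactly the paper's (its entire proof is that one sentence plus ``the rest is easy to check''), and your treatment of the universal property, the $H$-action, and the coalgebra structure is sound. The place where you stall --- commutativity --- is, however, not merely delicate: for the object you have actually built it is \emph{false}. Take $H=R[\bb{Z}/2]$, the group algebra with its usual cocommutative coalgebra structure, and $M=R[b]$. The measuring relations for the two group-like elements say precisely that $m\mapsto 1(m)$ and $m\mapsto g(m)$ are algebra homomorphisms, and $R$-linearity recovers the relation for every other $h$; hence $T_R(H\otimes_R M)/\mathcal{I}$ is the coproduct $R[b]*_R R[b]$ in associative $R$-algebras, i.e.\ the free algebra $R\langle b,b'\rangle$, which is not commutative. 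Your homomorphism to the opposite algebra fixing the generators is here a genuine anti-automorphism (reversal of words), so no amount of upgrading will yield commutativity. The same phenomenon occurs in the paper's own main example: the relation $D^{(1)}(b\cdot b)=D^{(1)}(b)D^{(0)}(b)+D^{(0)}(b)D^{(1)}(b)$ constrains only the symmetrized product, so the tensor-algebra quotient for $H=\FG$, $M=R[b]$ is a free algebra on the $D^{(i)}(b)$, not the polynomial ring $\pi_0$ that the paper later identifies with $\FG(U(\frk{c}))$.

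The resolution is that $H(M)$ must be taken to be universal among \emph{commutative} $R$-algebras equipped with such a map --- equivalently, one adds all commutators $[h(m),g(n)]$ to the ideal $\mathcal{I}$, or forms the quotient of the symmetric algebra on $H\otimes_R M$. This is how Borcherds sets it up, and it is the only reading under which the later identifications are correct. With that definition the commutativity assertion holds by construction and needs no proof; the hypotheses that $M$ be commutative and $H$ cocommutative are then what make the whole package consistent and, as you correctly anticipate, cocommutativity of $H$ is exactly what lets the four legs of $\Delta^{(3)}_H(h)$ be rearranged when one checks that $\Delta_{H(M)}$ is an algebra map into the commutative algebra $H(M)\otimes H(M)$. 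So the gap you flagged is real, but it is a defect of the construction as literally given rather than of your argument: either build commutativity into the defining ideal or restrict the universal property to commutative targets, after which everything you wrote goes through.
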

\begin{proof}
$H(M)$ is defined to be the quotient of the tensor algebra of $H\otimes M$ 
by the ideal generated by the desired relations.
The rest statements are easy to check.
\end{proof}

It is natural to introduce 

\begin{dfn}
\label{dfn:inv-bichar}
Let $M$ be an $R$-bialgebra and
$S$ be a commutative $R$-algebra.
Suppose that an $R$-coalgebra $H$ acts on $M$ and 
$H \otimes H$ acts on $S$.
An $S$-valued bicharacter $r$ on $M$ is called $H$-invariant if 
$$
 r\bigl( (g a) \otimes (h b) \bigr)  
 = (g \otimes h) \bigl( r( a\otimes b) \bigr) 
$$
holds for any $g,h \in H$ and $a,b \in M$.
\end{dfn} 

Then we also have 

\begin{fct}[{\cite[Lemma 2.15]{B:2001}}]
\label{fct:bichar-HM}
Let $H$ be a cocommutative bialgebra, 
$S$ be a commutative algebra acted on by $H \otimes H$,
and $M$ be a commutative cocommutative bialgebra 
with an $S$-valued bicharacter $r$
Then $r$ extends uniquely to an $H$-invariant 
$S$-valued bicharacter on $H(M)$.
\end{fct}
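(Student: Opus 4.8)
The plan is to pin down $\wt r$ on the algebra generators of $H(M)$ by the invariance requirement, and then to extend it by the two multiplicativity axioms of a bicharacter, organising the extension through the universal property of $H(M)$ so that no defining relation has to be checked by bare hands.

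\emph{Uniqueness.} First I would record that $H(M)$ is generated as an $R$-algebra by the elements $h(m)$, that under the embedding $M \hookrightarrow H(M)$, $m\mapsto 1_H(m)$, one has $g\cdot h(m)=(gh)(m)$ for the $H$-action on $H(M)$, and that this embedding is a map of bialgebras. Taking $a=m$, $b=m'$ in Definition \ref{dfn:inv-bichar}, any $H$-invariant extension must then satisfy
\begin{align*}
 \wt r\bigl( h(m) \otimes h'(m') \bigr) = (h\otimes h')\bigl( r(m\otimes m') \bigr).
\end{align*}
Since a bicharacter is determined by its values on pairs of algebra generators through the axioms $\wt r(\alpha\beta\otimes\gamma)=\sum \wt r(\alpha\otimes\gamma')\wt r(\beta\otimes\gamma'')$ and $\wt r(\alpha\otimes\beta\gamma)=\sum\wt r(\alpha'\otimes\beta)\wt r(\alpha''\otimes\gamma)$, this formula forces $\wt r$ everywhere, giving uniqueness at once.

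\emph{Existence.} For existence I would realise $\wt r$ as an algebra homomorphism into a convolution algebra. Because $H(M)$ is a cocommutative bialgebra and $S$ is commutative, $A:=\Hom_R(H(M),S)$ is a commutative associative unital $R$-algebra under the convolution product $(\phi*\psi)(x)=\sum\phi(x')\psi(x'')$, with unit $x\mapsto\ve_{H(M)}(x)1_S$. A bicharacter $\wt r$ corresponds to a map $\Lambda\colon H(M)\to A$, $\Lambda(\alpha)(\beta)=\wt r(\alpha\otimes\beta)$, the first axiom being $\Lambda(\alpha\beta)=\Lambda(\alpha)*\Lambda(\beta)$ and the second being the compatibility $\Lambda(\alpha)(\beta\gamma)=\sum\Lambda(\alpha')(\beta)\Lambda(\alpha'')(\gamma)$ with $\Delta_{H(M)}\alpha$. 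I would build $\Lambda$ from the universal property of $H(M)$: it suffices to give an $R$-linear map $H\otimes M\to A$, $h\otimes m\mapsto \wt r(h(m)\otimes-)$, respecting the relations $h(mn)=\sum h'(m)h''(n)$ and $h(1)=\ve_H(h)$. Here the partial functionals $\wt r(h(m)\otimes-)\colon H(M)\to S$ are themselves produced by the symmetric use of the universal property in the second slot, the coupling between the slots being governed by the explicit comultiplication
\begin{align*}
 \Delta_{H(M)}\bigl(h(m)\bigr)=\sum h'(m')\otimes h''(m''),
\end{align*}
which holds because the $H$-action on the bialgebra $H(M)$ is by coalgebra maps and $M\hookrightarrow H(M)$ is a map of bialgebras.

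\emph{Main obstacle and invariance.} The technical heart is well-definedness: one must check that the prescription respects both defining relations of $H(M)$ in each slot simultaneously. Via the comultiplication formula above and the $H\otimes H$-action on $S$, these reductions come down to the bicharacter identities for $r$ on $M$ together with the module-algebra compatibilities of the $H$- and $(H\otimes H)$-actions, invoking the cocommutativity of $H$ and the commutativity of $M$ and $S$ exactly as in the proof of the twisting lemma. Once $\Lambda$, hence $\wt r$, exists, $H$-invariance on all of $H(M)$ is formal: it holds on generators by construction, and both sides of the invariance identity are multiplicative under the product of $H(M)$, since $g\cdot h(m)=(gh)(m)$ and the two actions are compatible with the products of $H(M)$ and $S$. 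I expect this consistency verification to be the principal difficulty, with uniqueness and the final invariance check being purely formal.
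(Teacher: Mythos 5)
The paper offers no proof of this statement: it is imported as a Fact from Lemma~2.15 of \cite{B:2001}, so there is no in-paper argument to compare yours against; I can only assess the proposal on its own terms.

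Your outline is sound and is essentially the standard reconstruction. The uniqueness half is complete: $H(M)$ is generated as an algebra by the elements $h(m)$, the invariance requirement together with $g\cdot h(m)=(gh)(m)$ forces $\wt{r}(h(m)\otimes h'(m'))=(h\otimes h')\bigl(r(m\otimes m')\bigr)$, and the two multiplicativity axioms plus the comultiplication $\Delta(h(m))=\sum h'(m')\otimes h''(m'')$ propagate this to arbitrary products by induction on word length. For existence, two points deserve attention. First, there is a mild circularity in your construction: you feed the universal property of $H(M)$ the generating map $h\otimes m\mapsto \wt{r}(h(m)\otimes -)\in\Hom_R(H(M),S)$, but those functionals presuppose $\wt{r}$. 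The clean fix is to run the universal property in the \emph{second} slot first, producing an algebra map $H(M)\to\Hom_R(H\otimes M,S)$ into the convolution algebra of the coalgebra $H\otimes M$ (this only uses the bicharacter identities for $r$ and the module-algebra property of the $H\otimes H$-action on $S$), then transpose and apply the universal property in the first slot; equivalently, define the pairing on the free tensor algebra on $H\otimes M$ by the explicit generator-level formula and check that it annihilates the ideal of relations in each tensor factor. Second, that well-definedness verification is the entire content of the lemma, and you name the ingredients (bicharacter identities for $r$, module-algebra compatibility, cocommutativity of $H$ and $M$) without carrying out the computation. The check does go through, and your final remarks on deducing $H$-invariance on products from invariance on generators are correct, but as written the existence half is a plan rather than a proof.
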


The discussion above can be generalized 
to the categorical setting given in \S \ref{subsec:cat}.
For example, one can define an $S$-valued bicharacter on $M \otimes_{\cA} M$,
where $M$ is a bialgebra object in the additive monoidal category $\cA$
and $S$ is a commutative ring object in $\cA$.,

Now we  recall the construction of $(\cA,H,S)$-vertex algebra 
using bicharacter, which was explained in \cite[Lemma 4.1, Theorem 4.2]{B:2001}.

\begin{lem}[{\cite[Lemma 4.1]{B:2001}}]
\label{lem:sing-bichar}
Let $M$ be a commutative and cocommutative bialgebra object in $\cA$,
$H$ be a cocommutative bialgebra object $\cA$,
and $S$ be a commutative ring object in $\Fun(\Fine,T^*(H))$
If $r$ is an $S(\{1:2\})$-valued $H$-invariant bicharacter 
on a commutative cocommutative bialgebra $H(M)$ in $\cA$,
then one can extend $r$ to a singular bicharacter of $T_*(H(M))$.
\end{lem}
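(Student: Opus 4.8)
The plan is to define the extension by an explicit product formula, contributing one factor of the bicharacter $r$ for each pair of points, and then to check the bicharacter axioms, the $H$-invariance, and --- the genuinely delicate point --- naturality in $\cC$. Recall that a singular bicharacter of $T_*(H(M))$ amounts to a family of morphisms in $\cA$
$$
 r(I,J) : T_*(H(M))(I) \otimes_{\cA} T_*(H(M))(J) \longto S(I:J),
$$
natural in $I$ and $J$, compatible with the $T^*(H)$-action, and multiplicative in each variable with respect to the component-wise bialgebra structure on $T_*(H(M))$; here $I:J$ is the object of $\Fine$ that is the disjoint union of $I$ and $J$ with these two as separate equivalence classes, as in the discussion before Definition \ref{dfn:scom}. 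Since $T_*(H(M))(\{1\})=H(M)$, the given $r$ is exactly the component $r(\{1\},\{1\})$, and the task is to produce the remaining components consistently.

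For $i \in I$ and $j \in J$ let $e_{ij}: \{1:2\} \to I:J$ be the morphism of $\Fine$ with $e_{ij}(1)=i$ and $e_{ij}(2)=j$; this is legitimate because $i$ and $j$ lie in distinct classes of $I:J$, and functoriality of $S$ yields $S(e_{ij}): S(\{1:2\}) \to S(I:J)$. I would then define $r(I,J)$ to be the composite that first duplicates, via the comultiplication of $H(M)$, the factor at each $i \in I$ into $|J|$ copies and the factor at each $j \in J$ into $|I|$ copies, then uses the symmetry of $\cA$ to regroup the tensorands into the pairs indexed by $I \times J$, then applies $S(e_{ij}) \circ r$ to the $(i,j)$-pair, and finally multiplies the resulting factors together using the commutative multiplication of $S(I:J)$. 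On elements this reads
$$
 r(I,J)\bigl( \bigl(\bigotimes_{i\in I} a_i\bigr) \otimes \bigl(\bigotimes_{j\in J} b_j\bigr) \bigr)
 = \prod_{(i,j) \in I \times J} S(e_{ij})\bigl( r(a_i \otimes b_j) \bigr).
$$
This is a well-defined morphism in $\cA$ because $H(M)$ is a cocommutative coalgebra and $S(I:J)$ is a commutative ring object.

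The bicharacter axioms for $r(I,J)$ in each variable then follow from the corresponding axioms for $r$ together with the fact that the comultiplication of $H(M)$ is an algebra map, so that the product formula is multiplicative, while the normalizations $r(1 \otimes -)=\ve$ and $r(- \otimes 1)=\ve$ reduce to those of $r$ under the unit of $H(M)$. The $H$-invariance of $r(I,J)$ is inherited from that of $r$ (Definition \ref{dfn:inv-bichar}): the $T^*(H)$-action on both $T_*(H(M))$ and $S$ is component-wise, so equivariance is checked pair-by-pair and reduces to the single pair handled by hypothesis.

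The main obstacle is naturality. For a morphism $f:I \to I'$ in $\cC$ one must verify
$$
 r(I',J) \circ (f_* \otimes \id) = S(\bar f) \circ r(I,J),
$$
where $\bar f : I:J \to I':J$ is the induced map in $\Fine$, and symmetrically in the second variable. Since $f_*=T_*(H(M))(f)$ is a composite of multiplications, units and symmetries of $H(M)$, it suffices to treat these generators. The symmetry case is the cocommutativity of the pairing; a unit $\emptyset \to \{1\}$ simply deletes a point, and the associated pairings disappear in agreement with $S$ thanks to the counit normalization. The decisive case is a surjection merging $i_1,i_2 \in I$ into $i' \in I'$: here $f_*$ multiplies the two factors, so on the left the $(i',j)$-contribution is $S(e_{i'j})\bigl(r(a_{i_1}a_{i_2} \otimes b_j)\bigr)$, whereas on the right the original contributions from $(i_1,j)$ and $(i_2,j)$ are carried along $\bar f$, and $\bar f \circ e_{i_1 j} = \bar f \circ e_{i_2 j} = e_{i'j}$; applying the identity $r(a_{i_1}a_{i_2} \otimes b_j)=\sum r(a_{i_1}\otimes b_j')\,r(a_{i_2}\otimes b_j'')$ together with the fact that $S(\bar f)$ is a ring homomorphism shows the two sides coincide. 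Verifying that this matching holds for every merge and every choice of factors is where the real work lies, and it uses precisely the interplay of the bialgebra axioms of $H(M)$ with the functoriality of $S$. Finally, uniqueness is automatic: every element of $T_*(H(M))(I)$ is generated from single-point elements by the functorial maps (inclusions and products), so the bicharacter and naturality axioms force any extension to equal the product formula above.
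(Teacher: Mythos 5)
Your construction is essentially identical to the paper's: the proof sketch there defines the extension by exactly this product over pairs $(i,j)\in I\times J$, comultiplying each $a_i$ into $|J|$ Sweedler components and each $b_j$ into $|I|$ components and pushing each factor $r(a_i^{(j)}\otimes b_j^{(i)})$ into $S(I:J)$ along the map induced by $\{i:j\}\hookrightarrow I:J$. The only caveat is that your displayed element formula should carry the Sweedler superscripts $r(a_i^{(j)}\otimes b_j^{(i)})$ that your verbal description correctly prescribes (as written, $r(a_i\otimes b_j)$ repeated over all $j$ would break multiplicativity and naturality); beyond that, the paper stops at the formula, while you additionally outline the axiom checks.
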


Note that  $S(\{1:2\})$ has an $H \otimes H$-action 
since $S$ is a $T^*(H)$-module so that 
$S(\{1:2\})$ is a module over $T^*(H)(\{1:2\}) = H \otimes H$ 
by Definition \ref{dfn:T-module}.
So the term `$H$-invariant' makes sense by Definition \ref{dfn:inv-bichar}.

Let us briefly sketch the proof of Lemma~\ref{lem:sing-bichar}.
We define the extended $r$ on $T_*(H(M))(I \sqcup J)$ by
$$
 r \Bigl( \bigotimes_{i \in I} a_i \otimes \bigotimes_{j \in J} b_j \Bigr)
 := 
 \sum \prod_{i \in I} \prod _{j \in J} r(a_i^{(j)} \otimes b_j^{(i)})
$$
with 
$\Delta_M^{|J|-1}(a_i) = \sum \bigotimes_{j \in J} a_i^{(j)}$ and
$\Delta_M^{|I|-1}(b_j) = \sum \bigotimes_{i \in I} b_j^{(i)}$.
Each $r(a_i^{(j)} \otimes b_j^{(i)})$ is considered 
as an element of $S(I \sqcup J)$ using the natural map 
from $S(\{i:j\})$ to $S(I \sqcup J)$.

\begin{fct}[{\cite[Theorem 4.2]{B:2001}}]
\label{fct:twisting}
Suppose that $H$ is a cocommutative bialgebra in $\cA$ 
and that $S$ is a commutative ring in $\Fun(\Fine,\cA,T^*(H))$.
Assume that $r$ is a symmetric $S(\{1:2\})$-valued bicharacter 
of a commutative and cocommutative bialgebra $M$ in $\cA$.
Then the twisting $T_*(H(M))^r$ of $T_*(H(M))$ by the singular bicharacter 
constructed by the extension of $r$ in Lemma \ref{lem:sing-bichar}
is $(\cA,H,S)$-vertex algebra.
\end{fct}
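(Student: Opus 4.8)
The plan is to unwind the definition of an $\AHS$-vertex algebra and to supply the required structure step by step. By Definition \ref{dfn:scom} it suffices to realize $V:=T_*(H(M))^r$ as an object of $\Fun(\Fin,\cA,T^*(H),S)$ whose extension \eqref{eq:V:ext} to $\Fun(\Fine,\cA,T^*(H),S)$ is a singular commutative ring object, that is, an object carrying a commutative, associative and unital multiplication realized by the singular tensor product $\odot$. The underlying object causes no trouble: $H(M)$ is a commutative cocommutative bialgebra object carrying an $H$-action (the universal property recalled before Definition \ref{dfn:inv-bichar}), so by Example \ref{eg:bialgebra:module} the functor $T_*(H(M))$ is a commutative ring object and a $T^*(H)$-module, while the $S$-module structure and the passage to $S$-coefficients are modeled on the algebraic twisting, whose twisted algebra has underlying object $M\otimes_R S$. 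Since on $\Fin$ the singular tensor product collapses to the ordinary one, the genuinely new content lives on the $\Fine$-extension, and this is where I would concentrate.

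The heart of the construction is the singular multiplication. I would promote the twisted product \eqref{eq:twisted_product}, $a\circ_r b=\sum a'b'\,r(a''\otimes b'')$, to a family of maps $V(I_1)\otimes V(I_2)\to V(I_1\sqcup I_2)$, where $a'b'$ is the canonical product of the ring object $T_*(H(M))$ (the identification $T_*(H(M))(I_1)\otimes T_*(H(M))(I_2)\cong T_*(H(M))(I_1\sqcup I_2)$) and the singular bicharacter of Lemma \ref{lem:sing-bichar} supplies the coefficient $r(a''\otimes b'')$, assembled from the $H(M)$-comultiplications and the natural maps $S(\{i:j\})\to S(I_1\sqcup I_2)$. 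Because these coefficients are genuinely singular, living over the $\Fine$-object $I_1:I_2$, the family should factor through the singular tensor product and define $\wt V\odot\wt V\to\wt V$.

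Next I would verify the ring axioms. Associativity and commutativity are the functorial shadows of the pointwise identities proved for the twisted product: associativity follows from the associativity and cocommutativity of $H(M)$ together with the multiplicativity $r(ab\otimes c)=\sum r(a\otimes c')r(b\otimes c'')$ of the bicharacter, and commutativity follows from the cocommutativity of $H(M)$ and the symmetry of $r$ (the hypothesis on $r$ that is actually used here). The unit is $1_M$, which furnishes the vacuum exactly as in the proof of Fact \ref{fct:Borcherds}.

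The step I expect to be the main obstacle is showing that the twisted product is well defined as a morphism out of $\odot$, i.e. that the family above is a singular multilinear map in the required sense. Compatibility with the $T^*(H)$-action is precisely the $H$-invariance of the extended bicharacter, guaranteed by Lemma \ref{lem:sing-bichar} and Fact \ref{fct:bichar-HM}; compatibility with the $S(I_k)$-actions follows from the bimultiplicativity of $r$. The delicate point is functoriality in the $I_k$: one must confirm that splitting by the $H(M)$-comultiplication and pairing by $r$ is invariant under the morphisms $I_k\to I_k'$ that index the filtered colimit computing $\odot$, so that the construction descends to the colimit rather than merely to its terms. Once this is checked, the extension \eqref{eq:V:ext} inherits a singular commutative ring structure and the theorem follows.
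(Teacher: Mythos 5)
The paper states this result as a Fact imported from \cite[Theorem~4.2]{B:2001} and supplies no proof of its own, so there is nothing to compare line by line; your outline assembles exactly the ingredients the paper sets up for this purpose (the twisted product \eqref{eq:twisted_product} together with its associativity/commutativity argument, Fact~\ref{fct:bichar-HM} for the $H$-invariant extension of $r$ to $H(M)$, and Lemma~\ref{lem:sing-bichar} for the singular extension), which is the route Borcherds takes. The steps you flag but do not carry out --- that the twisted multiplication is a singular multilinear map, in particular that it descends to the filtered colimit defining $\odot$, and that the $H$-invariant extension of a symmetric bicharacter to $H(M)$ remains symmetric (needed for singular commutativity) --- are indeed where the remaining work lies, but you have identified them correctly and the surrounding lemmas of the paper supply what is needed to close them.
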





The following remark due to \cite[(4.2)]{P} is useful.

\begin{lem}
Let $T_*(\FG(M))$ be an $\RHS$-vertex algebra 
resulting from a universal commutative cocommutative bialgebra $\FG(M)$.
Consider the twisting $T_*(\FG(M))^r$ of $T_*(\FG(M))$ 
by the (singular) bicharacter $r$.
Then in the ordinary vertex algebra associated to $T_*(\FG(M))^r$
we have 
$$
 Y(a,x_1)Y(b,x_2)\vac = \Phi_r(a,b) \in R[(x_1-x_2)^{\pm1}]
$$
for $a,b \in T_*(\FG(M))^r(\{1\}) = \FG(M)$ with
$$
 \Phi_r(a,b) := \sum_{i,j\ge 0, \, (a), (b)} x_1^i x_2^j D^{(i)}(a')D^{(j)}(b'')r(a'',b'')
$$
\end{lem}
\begin{proof}
This is the direct consequence of the formula \eqref{eq:taylor} 
and the definition of the twisted product \eqref{eq:twisted_product}.
\end{proof}

\section{Examples of $\AHS$-vertex algebras}

\subsection{Heisenberg algebra}
\label{subsec:Heisenberg}

We introduce a typical example of $\AHS$-vertex algebra,
whose ordinary vertex algebra will be the Heisenberg vertex algebra.
Let $R$ be a fixed commutative ring, and 
let us set $\cA = \RM$, $H = \FG$ and $S = \Se$ 
as in \S\ref{subsec:RM}.

Consider the Laurent polynomial ring $R[t^{\pm1}]$ of one variable.
It is (trivially) a commutative ring object in $\RM$,
and has an action of $\FG = R[D^{(i)} \mid i \in\bb{Z}_{\ge0}]$ 
defined as
$$
 D^{(i)}t^n = \binom{n}{i}t^{n-i}.
$$ 
By Lemma \ref{lem:cro}, the object $T_*(R[t^{\pm1}])$ in $\Fun(\cC,\RM)$ 
is a commutative ring object.
Hereafter let us use the notation
$T_*(R[t^{\pm1}])(\{1,\ldots,n\}) = R[t_1^{\pm1},\ldots,t_n^{\pm1}]$.

We also have
\begin{lem}
$T_*(R[t^{\pm1}])$ is a $T^*(\FG)$-module.
\end{lem}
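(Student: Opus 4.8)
The plan is to verify the two conditions in Definition~\ref{dfn:T-module} directly, using the structure already established for $\FG$ and $R[t^{\pm1}]$. Since $\cA = \RM$, I will use the module-theoretic reformulation \eqref{eq:H-mod} of the commuting diagram \eqref{diag:H-mod}: for every morphism $f: I \to J$ in $\cC$ it suffices to check that $f_*(f^*(g).m) = g.f_*(m)$ for all $g \in T^*(\FG)(J)$ and $m \in T_*(R[t^{\pm1}])(I)$, where $f_* = T_*(R[t^{\pm1}])(f)$ is the multiplication/unit map of Definition~\ref{dfn:T_*} and $f^* = T^*(\FG)(f)$ is built from the comultiplication and counit of $\FG$.

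First I would recall that $T_*(R[t^{\pm1}])(I) = R[t_i^{\pm1} \mid i \in I]$ carries a component-wise module structure over $T^*(\FG)(I) = \FG^{\otimes I}$, with the $i$-th copy of $\FG$ acting through $D^{(k)}t_i^n = \binom{n}{k}t_i^{n-k}$ and trivially on the other variables. Because any morphism in $\Fin$ decomposes into the elementary morphisms $\id_I$, $p_J$, $s$, and injections (as noted after Example~\ref{eg:f}), and both $f_*$ and $f^*$ are functorial, it is enough to verify \eqref{eq:H-mod} on these generators; the general case then follows by composition. The $\id_I$ case is trivial, and the symmetry $s$ case is immediate from naturality of the transposition $\sigma$. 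The injection case reduces to compatibility of the unit of $R[t^{\pm1}]$ with the counit $\ve_{\FG}$, i.e.\ $D^{(k)} \cdot 1 = \ve_{\FG}(D^{(k)}) = \delta_{k,0}$, which holds since $D^{(k)}t^0 = \binom{0}{k}t^{-k} = \delta_{k,0}$.

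The essential case is the surjection $p: \{1,2\} \to \{1\}$, where $p_*$ is the multiplication $R[t_1^{\pm1},t_2^{\pm1}] \to R[t^{\pm1}]$ sending $t_1, t_2 \mapsto t$, and $p^*: \FG \to \FG \otimes \FG$ is the comultiplication $\Delta(D^{(k)}) = \sum_{l=0}^{k} D^{(l)} \otimes D^{(k-l)}$. Here \eqref{eq:H-mod} becomes the Leibniz-type identity
\begin{align*}
 D^{(k)}(t^{m+n}) = \sum_{l=0}^{k} \bigl(D^{(l)}t^m\bigr)\bigl(D^{(k-l)}t^n\bigr),
\end{align*}
and I would check it by the Vandermonde convolution $\binom{m+n}{k} = \sum_{l=0}^{k}\binom{m}{l}\binom{n}{k-l}$ applied to $D^{(k)}t^{m+n} = \binom{m+n}{k}t^{m+n-k}$. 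This is exactly the statement that the derivation action of $\FG$ is compatible with multiplication, which is precisely the fact that $R[t^{\pm1}]$ is an $\FG$-module algebra; in the categorical language it says $R[t^{\pm1}]$ is a ring object carrying an $\FG$-action. I expect this Vandermonde identity to be the only real computation, and it is routine. The main obstacle is bookkeeping rather than mathematics: one must track which tensor factor of $T^*(\FG)(I)$ acts on which variable and confirm that the multi-index comultiplication $\Delta^{|I|-1}$ matches the iterated multiplication $p_{I,*}$ under \eqref{eq:H-mod} for a general surjection. This follows by coassociativity of $\Delta$ together with associativity of multiplication in $R[t^{\pm1}]$, so once the two-variable case is settled, the general surjection reduces to it by induction. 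Invoking Example~\ref{eg:bialgebra:module}(2), which asserts that $T_*(M)$ is a $T^*(\FG)$-module whenever $M$ is a ring object with $\FG$-action, would in fact subsume the whole argument, so an alternative is simply to observe that $R[t^{\pm1}]$ is such a ring object and cite that example.
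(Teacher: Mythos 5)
Your proof is correct and follows essentially the same route as the paper: reduce to the elementary morphisms of Example~\ref{eg:f} and verify \eqref{eq:H-mod} case by case, with the surjection $\{1,2\}\to\{1\}$ handled by the Vandermonde convolution $\binom{m+n}{k}=\sum_{l}\binom{m}{l}\binom{n}{k-l}$, exactly the binomial identity the paper invokes. The closing remark that one could instead just cite Example~\ref{eg:bialgebra:module}(2) is a valid shortcut the paper does not take here, but it changes nothing of substance.
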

\begin{proof}
We only need to check the formula \eqref{eq:H-mod} 
with $f$ given by each case in Example \ref{eg:f}.
The case $f=\id$ is trivial.
In the case $f:\{1,2\} \to \{1\}$,
we may set $g= D^{(k)}$ and $m= t_1^m \otimes t_2^n$.
Then
\begin{align*}
f_*(f^*(g).m) 
 &= f_*\Bigl(\sum_{i+j=k}
              (D^{(i)}\otimes D^{(j)})(t_1^m \otimes t_2^n)\Bigr)
\\
 &= f_*\Bigl(\sum_{i+j=k}\binom{m}{i}\binom{n}{j}
                         t_1^{m-i} \otimes t_2^{n-j})\Bigr)
\\
 &= \sum_{i+j=k}\binom{m}{i}\binom{n}{j}t_1^{m+n-k}
 = \binom{m+n}{k}t_1^{m+n-k} = g.f_*(m).
\end{align*}
In the second from last equality is the result of the binomial formula.
The case $f:\{1,\ldots,n\} \to \{1\}$ can be treated similarly.
The case $f:\emptyset \to \{1\}$ is 
the result of $D^{(i)} 1 = 0$ for $i>0$.
The last case $f=s:\{1,2\} \to \{1,2\}$ 
is the consequence of the commutativity of the polynomial ring.
\end{proof}

$T_*(R[t^{\pm1}])$ is obviously an $\Se$-module 
(with the action given by the multiplications of rational functions),
and $T_*(R[t^{\pm1}])$ is a commutative ring object 
in $\Fun(\cC,\RM,T^*(\FG),\Se)$.
We also have the commutative subalgebra  $T_*(R[t^{-1}])$ 
in $\Fun(\cC,\RM,T^*(\FG),\Se)$. 

Next we consider the one-dimensional 
commutative Lie algebra $\frk{c} = R b$.
Its universal enveloping algebra $U(\frk{c})$ 
is isomorphic to the polynomial ring $R[b]$, 
or the symmetric algebra $S^{\bullet}b$.
$U(\frk{c})$ has a cocommutative bialgebra structure 
with $\Delta(b)=b\otimes 1 + 1 \otimes b$.

The tensor product
$L\frk{c} \equiv \frk{c}[t^{\pm1}] := \frk{c} \otimes_R R[t^{\pm1}]$
also has the structure of commutative Lie algebra,
which may be called the loop Lie algebra (attached to $\frk{c}$).
It has a Lie subalgebra 
$\frk{c}[t] := \frk{c} \otimes_R R[t]$.
One may consider the universal enveloping algebras 
$U(L\frk{c}) \supset U(\frk{c}[t])$.
The trivial representation $R_0 := R v_0$ of $\frk{c}[t]$,
where $v_0$ is a basis of the representation space,
induces the Verma module 
$\pi_0 := \Ind_{\frk{c}[t]}^{L\frk{c}}$ of $U(L\frk{c})$.

Hereafter we use the notation
$$
 b_i := b \otimes t^i
$$ 
for $i \in \bb{Z}$.
$\pi_0$ has a basis consisting of the monomials 
$$
  b_{-i_1}b_{-i_2} \cdots b_{-i_n}v_0,\quad
  n\ge0,\ i_1 \ge i_2 \ge \cdots \ge i_n>0.
$$
Since $\frk{c}$ is commutative, 
one has a unital associative commutative ring structure on $\pi_0$ 
defined by 
$$ 
 b_{-i} v_0 \otimes b_{-j} v_0 \mapsto  b_{-i} b_{-j} v_0 .
$$

Then, as in the case of $R[t^{-1}]$, 
Lemma \ref{lem:cro} says that the object $T_*(\pi_0)$ 
in $\Fun(\cC,\RM)$ is a commutative ring object.
The $T^*(\FG)$-module structure on $T_*(R[t^{-1}])$ 
induces one on $T_*(\pi_0)$.
Thus $T_*(\pi_0)$ is an object of $\Fun(\cC,\RM,T^*(\FG))$.
Similarly one can see that $T_*(\pi_0)$ is an $\Se$-module,
and it is also a (singular) commutative ring object 
in $\Fun(\cC,\RM,T^*(\FG),\Se)$.
Therefore we have an $\AHS$-vertex algebra $T_*(\pi_0)$.

Let us describe the ordinary vertex algebra associated to $T_*(\pi_0)$.
We use the notations
$$
 T_*(\pi_0)(\{1\}) = R[b_{-i} \mid i \ge 0] v_0
                   = R [b_{-i}^{(1)} \mid i \ge 0] v_0
$$  
and 
$$
 T_*(\pi_0)(\{1,\ldots,n\})
 = R[b_{-i}^{(j)} \mid i\ge0, \, n\ge j\ge1 ] v0.
$$ 
Recalling the proof of Fact \ref{fct:Borcherds},
we compute several vertex operators $Y(\;,z)$.

\begin{align*}
Y(b_{-n}v_0,x_1)b_{-k}v_0
&=\sum_{i,j\ge0}x_1^ix_2^jf_{*}\Bigl(
    (D^{(i)} \otimes D^{(j)})(b_{-n}^{(1)}v_0\otimes b_{-k}^{(2)}v_0)
  \Bigr)
  \Bigr|_{x_2=0}
\\
&=\sum_{i,j\ge0}x_1^ix_2^j \binom{n+i-1}{i}\binom{k+j-1}{j}
   f_{*}\bigl( b_{-n-i}^{(1)}v_0\otimes b_{-k-j}^{(2)}v_0 \bigr)
  \Bigr|_{x_2=0}
\\
&=\sum_{i,j\ge0}x_1^ix_2^j \binom{n+i-1}{i}\binom{k+j-1}{j}
    b_{-n-i}^{(1)} b_{-k-j}^{(1)} v_0
  \Bigr|_{x_2=0}
\\
&=\sum_{i\ge0}x_1^i \binom{n+i-1}{i} 
   b_{-n-i} b_{-k} v_0
\end{align*}
Here we used the notation $f:\{1,2\} \to \{1\}$, a morphism in $\Fin$.

A similar calculation gives
\begin{align*}
&Y(b_{-m_1} \cdots b_{-m_k}v_0,z)b_{-n_1}\cdots b_{-n_l}v_0
\\
&=\sum_{i,j_1,\ldots,j_k\ge0} x_1^i 
   \binom{i}{j_1,\ldots,j_k}
   \binom{m_1+j_1-1}{j_1}
   \cdots
   \binom{m_k+j_k-1}{j_k}
   b_{-m_1-j_1} \cdots b_{-m_k-j_k} b_{-n_1} \cdots b_{-n_l} v_0
\end{align*}

These formulas correspond to the OPE
$$
 \partial_z^i b(z) \partial_w^j b(w) 
 =\nord{ \partial_z^i b(z) \partial_w^j b(w)}
$$
with $b(z) := \sum_{n\in \bb{Z}} b_n z^{-n-1}$,
where $\nord{}$ is the usual normal ordering.
The Goddard's uniqueness Theorem \cite[3.1.1]{FB}
and the reconstruction theorem \cite[2.3.11]{FB} 
imply that the ordinary vertex algebra attached to $\pi_0$ 
coincides with the Heisenberg vertex algebra without central extension.
In order to construct the usual Heisenberg algebra with central extension,
where the OPE reads
$$
 b(z)b(w) = \dfrac{k}{(z-w)^2} + \nord{b(z)b(w)}
$$
we need to recall the twisting construction 
reviwed in \S\ref{subsec:twist}.

\begin{lem}
$\pi_0$ coincides with the universal algebra $H_a(U(\frk{c}))$.
\end{lem}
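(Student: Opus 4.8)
The plan is to exhibit an explicit isomorphism of bialgebras between $\pi_0$ and the universal algebra $\FG(U(\frk{c}))$ and then check that it respects all the relevant structure (commutative ring, $\FG$-action, and ultimately the $\AHS$-vertex algebra data). First I would unwind both sides. On one hand, $\pi_0 = \Ind_{\frk{c}[t]}^{L\frk{c}} R_0$ has, as shown above, the $R$-basis of monomials $b_{-i_1}\cdots b_{-i_n}v_0$ with $i_1\ge\cdots\ge i_n>0$, so as a commutative $R$-algebra it is the polynomial ring $R[b_{-1},b_{-2},\ldots]$. On the other hand, by Fact on the universal ring, $\FG(U(\frk{c}))$ is generated by the symbols $D^{(i)}(m)$ for $i\ge0$ and $m\in U(\frk{c})$, subject to $h(mn)=\sum h'(m)h''(n)$ and $h(1)=\ve_\FG(h)$. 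Since $U(\frk{c})=R[b]$ is generated by the single primitive element $b$, the multiplicativity relation reduces $\FG(U(\frk{c}))$ to the free commutative algebra on the generators $D^{(i)}(b)$, $i\ge0$.

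The key step is to define the map on generators by $D^{(i)}(b)\mapsto b_{-i-1}v_0$ and verify it extends to a well-defined algebra homomorphism. For this I would check that the defining relations of $\FG(U(\frk{c}))$ are respected: the relation $h(1)=\ve_\FG(h)$ is the assertion that $D^{(i)}$ for $i>0$ kills the unit $v_0$, matching $D^{(i)}1=0$; and the relation $h(mn)=\sum h'(m)h''(n)$ together with the comultiplication $\Delta(D^{(i)})=\sum_{j=0}^i D^{(j)}\otimes D^{(i-j)}$ from Definition \ref{dfn:FG} forces the images of products, which is consistent because the target is commutative and polynomial. The inverse map sends $b_{-i-1}v_0$ to $D^{(i)}(b)$; since both sides are free commutative $R$-algebras on countably many generators in bijection, the map is an isomorphism of commutative $R$-algebras.

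Next I would verify that this algebra isomorphism intertwines the two $\FG$-actions. On $\pi_0$ the $\FG$-action is induced from the action on $R[t^{-1}]$, namely $D^{(i)}t^n=\binom{n}{i}t^{n-i}$, which after the substitution $b_{-k}=b\otimes t^{-k}$ gives $D^{(i)}b_{-k}=\binom{-k}{i}b_{-k-i}$; on $\FG(U(\frk{c}))$ the action is the one making $\FG$ act on the universal ring as in the Fact, where $D^{(i)}$ acts on generators by the formal-group structure of $\FG$ recorded in Definition \ref{dfn:FG}. I expect these to match under the identification $D^{(j)}(b)\leftrightarrow b_{-j-1}v_0$, essentially because both encode the divided-power action of the single translation operator $D=D^{(1)}$, with $T=D^{(1)}$ sending $b_{-k}\mapsto -k\,b_{-k-1}$ on one side and acting as the canonical derivation of $\FG(U(\frk{c}))$ on the other.

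The main obstacle I anticipate is bookkeeping the binomial coefficients and the divided-power normalization so that the two $\FG$-actions agree on the nose rather than merely up to rescaling the generators. Concretely, one must confirm that the identification $D^{(i)}(b)=b_{-i-1}v_0$ (as opposed to some coefficient-twisted version) is the one compatible with both the multiplicative relations defining $\FG(U(\frk{c}))$ and the explicit action $D^{(i)}t^n=\binom{n}{i}t^{n-i}$ on $\pi_0$; getting the indexing shift $i\mapsto i+1$ and the sign in $\binom{-k}{i}$ to line up is the delicate point. Once the $\FG$-equivariant algebra isomorphism is established, the remaining structures ($\Se$-module structure and the singular commutative ring object structure on $T_*(-)$) are transported automatically, since $T_*$ is functorial in $\FG$-ring-objects and both $\AHS$-vertex algebra structures arise from applying $T_*$ to the same underlying object.
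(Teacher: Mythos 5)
Your proposal is correct and follows essentially the same route as the paper: the paper's one-line proof exhibits exactly the map $D^{(i)}b^n \mapsto \sum_{i_1+\cdots+i_n=i} b_{-i_1-1}\cdots b_{-i_n-1}v_0$, which is precisely your generator assignment $D^{(i)}(b)\mapsto b_{-i-1}v_0$ propagated through the multiplicativity relation $h(mn)=\sum h'(m)h''(n)$. Your version is more careful about why $\FG(U(\frk{c}))$ is free on the $D^{(i)}(b)$ and about the $\FG$-equivariance (where the sign convention $D^{(i)}(A\otimes t^{-m})=\binom{m+i-1}{i}A\otimes t^{-m-i}$ used later in the paper is the one that makes the actions match on the nose), points the paper leaves implicit.
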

\begin{proof}
$H_a(U(\frk{c}))$ is isomorphic to $R[D^{(i)}] \otimes_R R[b]$,
so it is isomorphic to $\pi_0$ under the map 
$$ 
 D^{(i)}b^n \longmapsto  
  \sum_{\substack{i_1,\ldots,i_n\ge0 \\ i_1 + \cdots + i_n = i}}
  b_{-i_1-1} \cdots b_{-i_n-1}v_0.
$$
\end{proof}

Since $H_a$ is cocommutative and $U(\frk{c})$ is a cocommutative commutative bialgebra,
we may apply the twisting construction.
Let us consider the following bicharacter.
Fix an element $c$ in $R$.
The $R$-bilinear map
$b \otimes b \mapsto c$ on $R b$
induces an $R$-valued bicharacter  on $U(\frk{c})$
given by $b^m \otimes  b^n \mapsto m! c^m \delta_{m,n}$.
Now consider the $\Se(\{1:2\})$-valued bicharacter 
$$ 
 r(b^m \otimes  b^n) = \dfrac{m! c^m \delta_{m,n}}{(x_1-x_2)^2}.
 \ \in \Se(\{1:2\} = R[(x_1-x_2)^{\pm1}]
$$ 
Then Fact~\ref{fct:bichar-HM} says 
$r$ lifts to an $\FG$-invariant  $\Se(\{1:2\})$-valued bicharacter 
on $H_a(U(\frk{c}))$.
It can be written down as
\begin{align}
\label{eq:bichar:Heisenberg}
  r(D^{(i)} b^m \otimes  D^{(j)}b^n) 
  = \dfrac{\partial_{x_1}^i}{i!} \dfrac{\partial_{x_2}^j }{j!}
    \dfrac{m! c^m \delta_{m,n}}{(x_1-x_2)^2}.
\end{align}

\begin{lem}
Consider the twisting $V := T_*(\pi_0)^r = T_*(\FG(U(\frk{c})))^r$ 
of $T_*(\pi_0) = T_*(\FG(U(\frk{c})))$ 
by the bicharacter \eqref{eq:bichar:Heisenberg},
which is an $\RHS$-vertex algebra by Fact \ref{fct:Borcherds}.
Then in the ordinary vertex algebra associated to $V$ 
we have 
$$
 Y(b_{-1}v_0,x_1) Y(b_{-1}v_0,x_2) 
 = \dfrac{1}{(x-y)^2} + \nord{b(x_1)b(x_2)}.
$$
\end{lem}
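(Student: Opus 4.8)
The plan is to reduce the claim to the two-point computation of the preceding lemma (following \cite{P}) and then to lift the resulting vacuum-level formula to an identity of operators by the standard vertex-algebra machinery. First I would record the dictionary. Under the isomorphism $\pi_0\cong\FG(U(\frk{c}))$ established above, in which $D^{(i)}b^n$ corresponds to $\sum_{i_1+\cdots+i_n=i}b_{-i_1-1}\cdots b_{-i_n-1}v_0$, the state $b_{-1}v_0$ is the image of the primitive generator $b\in U(\frk{c})\subset\FG(U(\frk{c}))$, and $D^{(i)}(b)=b_{-i-1}v_0$. Writing $Y(b_{-1}v_0,z)=b(z)=\sum_n b_n z^{-n-1}$, the task is to pin down the singular part of $b(x_1)b(x_2)$, the regular part being by definition the normal-ordered product.

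Second, I would apply the preceding lemma with $a=b=b$, giving $Y(b_{-1}v_0,x_1)Y(b_{-1}v_0,x_2)\vac=\Phi_r(b,b)$, and evaluate $\Phi_r(b,b)$ by expanding $\Delta(b)=b\otimes1+1\otimes b$ in both arguments. Since $\ve(b)=0$ forces $r(b\otimes1)=r(1\otimes b)=0$ while $r(1\otimes1)=1$ and $r(b\otimes b)=c/(x_1-x_2)^2$ by \eqref{eq:bichar:Heisenberg}, and since $D^{(i)}(1)=\delta_{i,0}$, only two terms survive: one with both singular tensor-factors equal to $1$, contributing $\sum_{i,j\ge0}x_1^i x_2^j\,D^{(i)}(b)D^{(j)}(b)=\sum_{i,j\ge0}x_1^i x_2^j\,b_{-i-1}b_{-j-1}v_0$, and one with both equal to $b$, contributing $(c/(x_1-x_2)^2)\vac$. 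Thus
\[
 Y(b_{-1}v_0,x_1)Y(b_{-1}v_0,x_2)\vac
 =\frac{c}{(x_1-x_2)^2}\vac+\sum_{i,j\ge0}x_1^i x_2^j\,b_{-i-1}b_{-j-1}v_0,
\]
with $c$ the scalar of the bicharacter (so the stated normalization $1/(x-y)^2$ is the case $c=1$). I would then identify the regular part with $\nord{b(x_1)b(x_2)}\vac$: normal ordering moves the annihilation modes $b_n$ ($n\ge0$) to the right, and these kill $\vac$, leaving exactly $\sum_{i,j\ge0}b_{-i-1}b_{-j-1}\,x_1^i x_2^j\vac$.

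Third, I would promote this to the operator identity. Setting $x_2=0$ (legitimate since $Y(b_{-1}v_0,x_2)\vac=\sum_j x_2^j b_{-j-1}v_0$ has regular limit $b_{-1}v_0$) yields $Y(b_{-1}v_0,x_1)(b_{-1}v_0)=(c/x_1^2)\vac+\sum_{i\ge0}x_1^i b_{-i-1}b_{-1}v_0$, from which I read off the only nonzero nonnegative product $(b_{-1}v_0)_{(1)}(b_{-1}v_0)=c\vac$, together with $(b_{-1}v_0)_{(n)}(b_{-1}v_0)=0$ for $n=0$ and for $n\ge2$. By the general operator product expansion of a local vertex algebra,
\[
 Y(A,z)Y(B,w)=\sum_{n\ge0}\frac{Y(A_{(n)}B,w)}{(z-w)^{n+1}}+\nord{Y(A,z)Y(B,w)},
\]
valid here because of the locality established in the proof of Fact~\ref{fct:Borcherds} (cf. \cite[3.1.1, 2.3.11]{FB}), only the $n=1$ term contributes, and $Y((b_{-1}v_0)_{(1)}(b_{-1}v_0),w)=Y(c\vac,w)=c\,\id_V$ by the vacuum axiom. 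Hence $Y(b_{-1}v_0,x_1)Y(b_{-1}v_0,x_2)=c/(x_1-x_2)^2+\nord{b(x_1)b(x_2)}$, as asserted.

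The main obstacle is exactly this last step: the preceding lemma produces only the vacuum expectation value $Y(A,x_1)Y(B,x_2)\vac$, whereas the statement is an identity of operators on all of $V$. Bridging the gap requires locality together with the state-field correspondence, which guarantee that the singular part of the OPE is recovered from the nonnegative products $\{A_{(n)}B\}_{n\ge0}$ alone. Concretely this can be repackaged as the Borcherds commutator formula, which here reads $[b(x_1),b(x_2)]=c\,\partial_{x_2}\delta(x_1-x_2)$, i.e.\ $[b_m,b_n]=mc\,\delta_{m+n,0}$, after which the OPE follows by Wick's theorem. I expect the point needing the most care to be the verification that no simple pole ($A_{(0)}B$) and no higher poles ($A_{(n)}B$ for $n\ge2$) occur, which is precisely the vanishing already exhibited by the explicit form of $\Phi_r(b,b)$.
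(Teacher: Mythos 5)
Your proposal is correct, and in fact the paper states this lemma without any proof at all, so your argument supplies what the text leaves implicit. The route you take is exactly the one the surrounding material sets up: specialize the preceding lemma (the formula $Y(a,x_1)Y(b,x_2)\vac=\Phi_r(a,b)$ derived from \eqref{eq:taylor} and \eqref{eq:twisted_product}) to $a=b$ the primitive generator, use $\Delta(b)=b\otimes 1+1\otimes b$ together with $r(b\otimes 1)=r(1\otimes b)=\ve(b)=0$ and $r(b\otimes b)=c/(x_1-x_2)^2$ to see that only the two extreme terms survive, and identify the regular term with $\nord{b(x_1)b(x_2)}\vac$. Your computation of $\Phi_r(b,b)$ is right, and your observation that the stated coefficient $1/(x-y)^2$ corresponds to the normalization $c=1$ of the bicharacter \eqref{eq:bichar:Heisenberg} (otherwise the constant $c$ should appear) correctly flags what is evidently a typographical slip in the statement.

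The one place where you go beyond what the paper's apparatus directly gives is the promotion from the vacuum identity to the asserted operator identity, and you handle it properly: reading off $(b_{-1}v_0)_{(n)}(b_{-1}v_0)$ from the $x_2=0$ specialization and invoking the OPE expansion valid by the locality established in the proof of Fact~\ref{fct:Borcherds}. This step is genuinely needed for the lemma as stated (an equality of fields, not of vacuum expectation values), and making it explicit is an improvement over the paper's silent treatment. No gaps.
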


Thus we have 

\begin{prop}
The ordinary vertex algebra associated to $V$ 
coincides with the Heisenberg vertex algebra.
\end{prop}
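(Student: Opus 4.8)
The plan is to deduce the Proposition from the immediately preceding Lemma together with the standard recognition theorems for ordinary vertex algebras, in exact parallel with the identification of $T_*(\pi_0)$ carried out above. The essential observation is that the twisting $V = T_*(\FG(U(\frk{c})))^r$ leaves the underlying $R$-module unchanged: only the product is deformed through the bicharacter $r$. Hence I would begin by recording that the associated ordinary vertex algebra, obtained from $V$ via Fact~\ref{fct:Borcherds}, has state space $V(\{1\}) = \pi_0$ with its PBW basis $b_{-i_1}\cdots b_{-i_n}v_0$, vacuum $v_0$, and translation $T = D^{(1)}$, all inherited from the untwisted $T_*(\pi_0)$.

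Next I would isolate the generating field $b(z) := Y(b_{-1}v_0, z) = \sum_{n \in \bb{Z}} b_n z^{-n-1}$ and translate its operator product expansion into mode relations. The preceding Lemma supplies
\begin{align*}
 Y(b_{-1}v_0, x_1)\, Y(b_{-1}v_0, x_2) = \frac{c}{(x_1-x_2)^2} + \nord{b(x_1)b(x_2)}.
\end{align*}
Reading off the order-two pole through the usual dictionary between OPEs and commutators yields
\begin{align*}
 [b_m, b_n] = m\, c\, \delta_{m+n,0},
\end{align*}
which is precisely the defining relation of the affine Heisenberg Lie algebra at level $c$. Whereas the untwisted case produced a vanishing singular part and hence the Heisenberg vertex algebra without central extension, the bicharacter \eqref{eq:bichar:Heisenberg} now contributes exactly the central term.

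Finally I would invoke the recognition results. Since every basis monomial of $\pi_0$ is obtained by applying the negative modes $b_{-i}$ to $v_0$, the single field $b(z)$ generates the whole vertex algebra, and the mode relations above identify its action with that of the Heisenberg algebra on its Fock module. Goddard's uniqueness theorem \cite[3.1.1]{FB} then shows that the vertex operators are determined uniquely by this generating field, while the reconstruction theorem \cite[2.3.11]{FB} assembles the state space, the field $b(z)$, and its OPE into a vertex algebra isomorphic to the Heisenberg vertex algebra. I expect the main obstacle to be the verification that the twisting reproduces the central extension faithfully---that the pole coefficient is exactly $c$ and that the longer OPEs obtained from the twisted product remain compatible with the Heisenberg normal ordering---but this is already governed by the explicit bicharacter together with the product formulas \eqref{eq:taylor} and \eqref{eq:twisted_product}, so the identification follows at once once the mode relations are established.
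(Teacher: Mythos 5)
Your proposal is correct and follows essentially the same route as the paper: the paper derives the Proposition directly from the preceding Lemma's computation of the OPE $Y(b_{-1}v_0,x_1)Y(b_{-1}v_0,x_2)$, with the identification completed by the same appeal to Goddard's uniqueness theorem and the reconstruction theorem \cite[3.1.1, 2.3.11]{FB} already used for the untwisted case $T_*(\pi_0)$. Your explicit extraction of the mode relations $[b_m,b_n]=m\,c\,\delta_{m+n,0}$ simply spells out what the paper leaves implicit (and your pole coefficient $c$ is the one dictated by the bicharacter \eqref{eq:bichar:Heisenberg}).
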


\subsection{Formal delta functions}

Here we recall the treatment of delta functions 
following \cite[Chap. 2]{K98}.
As before let us fix a commutative ring $R$.

Let us call elements of $R[[z_1^{\pm1},z_2^{\pm1},\ldots,z_n^{\pm1}]]$, that is, formal expressions
$$
 \sum_{m_1,m_2,\ldots,m_n \in \bb{Z}}
  a_{m_1,m_2,\ldots,m_n}z_1^{m_1} z_2^{m_2} \cdots z_n^{m_n},
$$ 
by ($R$-valued) formal distributions.
For a formal distribution $f(z) = \sum_{n \in \bb{Z}}f_n z^n$,
the residue is given by
$$
 \Res_z f(z) := f_{-1}. 
$$
It induces a non-degenerate pairing 
\begin{align}
\label{eq:res}
 \langle\; , \;\rangle: R[[z^{\pm1}]] \times R[z^{\pm1}] \longto R, \qquad
 \langle f,g \rangle := \Res_z \bigl(f(z)g(z)\bigr).
\end{align}

\begin{dfn}
The formal delta function $\delta(z,w)$ is the formal distribution
$$
 \delta(z,w):= \sum_{n \in \bb{Z}} z^{-n-1} w^n  
  \in R[[z^{\pm1},w^{\pm1}]].
$$
\end{dfn}

\begin{fct}
The formal delta function enjoys the following properties.
\begin{enumerate}
\item
For an arbitrary formal distribution $f(z) \in R[[z^{\pm1}]]$,
the product $f(z) \delta(z,w)$ is well-defined in $R[[z^{\pm1},w^{\pm1}]]$,
and one has
\begin{align}
\label{eq:delta:a}
 \Res_z f(z)\delta(z,w) = f(w).
\end{align}

\item
One has
\begin{align}
\label{eq:delta:b}
 \delta(z,w)=\delta(w,z).
\end{align}


\item
For $j \in \bb{Z}_{\ge0}$,
\begin{align}
\label{eq:delta:c}
(z-w)^{j+1}\partial_w^j\delta(z,w)=0
\end{align} 
holds 
in  $ R[[z^{\pm1},w^{\pm1}]]$.

\item
$\delta(z,t)\delta(w,t)$ and $\delta(w,t)\delta(z,t)$ 
are well defined in $R[[z^{\pm},w^{\pm},t^{\pm}]]$, and one has
\begin{align}
\label{eq:delta:d}
\delta(z,t)\delta(w,t) = \delta(w,t)\delta(z,t).
\end{align}
\end{enumerate}
\end{fct}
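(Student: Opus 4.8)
The plan is to prove all four properties by working directly from the defining expansion $\delta(z,w)=\sum_{n\in\bb{Z}}z^{-n-1}w^n$, checking at each stage that the product of formal distributions in question is actually well-defined (products in $R[[z^{\pm1},w^{\pm1}]]$ not existing in general) and then reading off coefficients. For \eqref{eq:delta:a}, I would first observe that for $f(z)=\sum_m f_m z^m$ the product $f(z)\delta(z,w)=\sum_{m,n}f_m z^{m-n-1}w^n$ is well-defined, since the coefficient of any monomial $z^a w^b$ arises from the unique pair $(m,n)=(a+b+1,b)$ and is therefore a single term. Applying $\Res_z$ extracts the coefficient of $z^{-1}$, which forces $m=n$ and leaves $\sum_n f_n w^n=f(w)$. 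For \eqref{eq:delta:b} I would simply reindex by $m=-n-1$, which sends $\sum_n z^{-n-1}w^n$ to $\sum_m z^m w^{-m-1}=\delta(w,z)$.

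Property \eqref{eq:delta:c} is the main step. I would first settle the case $j=0$ by direct cancellation: writing $(z-w)\delta(z,w)=\sum_n z^{-n}w^n-\sum_n z^{-n-1}w^{n+1}$ and reindexing the second sum by $k=n+1$ shows that both sums equal $\sum_k z^{-k}w^k$, so they cancel and $(z-w)\delta(z,w)=0$. For the inductive step I set $A_j:=(z-w)^{j+1}\partial_w^j\delta(z,w)$ and assume $A_{j-1}=(z-w)^j\partial_w^{j-1}\delta(z,w)=0$. Applying $\partial_w$ and using $\partial_w(z-w)^j=-j(z-w)^{j-1}$ gives
$$0=-j(z-w)^{j-1}\partial_w^{j-1}\delta+(z-w)^j\partial_w^j\delta,$$
so that $(z-w)^j\partial_w^j\delta=j(z-w)^{j-1}\partial_w^{j-1}\delta$; multiplying by $(z-w)$ and invoking $A_{j-1}=0$ once more yields $A_j=jA_{j-1}=0$. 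With the base case $A_0=0$, induction completes \eqref{eq:delta:c}.

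For \eqref{eq:delta:d} I would first verify that $\delta(z,t)\delta(w,t)=\sum_{m,n}z^{-m-1}w^{-n-1}t^{m+n}$ is well-defined, since the coefficient of $z^a w^b t^c$ is determined by the single pair $(m,n)=(-a-1,-b-1)$ and is nonzero only when $c=-a-b-2$; the same analysis applies to the opposite ordering. Commutativity of $R$ then makes the two expressions termwise identical, giving the asserted equality $\delta(z,t)\delta(w,t)=\delta(w,t)\delta(z,t)$.

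The hard part will be \eqref{eq:delta:c}: both selecting an induction clean enough to avoid computing the falling factorials $\partial_w^j w^n=n(n-1)\cdots(n-j+1)\,w^{n-j}$ explicitly, and the recurring obligation to confirm that each product of formal distributions appearing in the argument genuinely lies in $R[[z^{\pm1},w^{\pm1}]]$. The induction above circumvents the coefficient bookkeeping entirely by bootstrapping from the single identity $(z-w)\delta(z,w)=0$ via the relation $A_j=jA_{j-1}$.
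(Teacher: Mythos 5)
Your proof is correct. The paper itself gives no proof of this Fact --- it is quoted from Kac's book [K98, Chap.~2] --- so there is nothing to compare against line by line; your argument is the standard direct verification. The only point worth highlighting is property (3): the most common textbook route computes $\partial_w^j\delta(z,w)=\sum_n n(n-1)\cdots(n-j+1)z^{-n-1}w^{n-j}$ explicitly and cancels binomial coefficients, whereas your induction $A_j=jA_{j-1}$, bootstrapped from the single identity $(z-w)\delta(z,w)=0$ via the Leibniz rule, avoids that bookkeeping entirely; it is legitimate because $\partial_w$ acts termwise on $R[[z^{\pm1},w^{\pm1}]]$ and Leibniz holds when one factor is a Laurent polynomial. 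Your well-definedness checks for (1) and (4) --- each monomial $z^aw^b$ (resp.\ $z^aw^bt^c$) receives a contribution from exactly one index pair --- are exactly the point that needs making, since arbitrary products of formal distributions do not exist.
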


Using the pairing \eqref{eq:res} and the property \eqref{eq:delta:a},
one can show
\begin{align*}
\delta(z,w)f(z) = \delta(z,w)f(w)
\end{align*}
for any $f \in R[[z,w]]$.
Then replacing $a(z)$ by $\delta(z,t)$ and exchanging $t$ with $z$,
one obtains

\begin{cor}
\begin{align}
\label{eq:delta-delta}
\delta(z,t)\delta(w,t) = \delta(w,t)\delta(z,w).
\end{align}
\end{cor}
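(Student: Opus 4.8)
The plan is to derive the corollary entirely from the substitution property displayed just above it, namely $\delta(z,w)f(z)=\delta(z,w)f(w)$, together with the symmetry \eqref{eq:delta:b}. Whenever the relevant products of delta functions are well-defined they lie in the commutative ring $R[[z^{\pm1},w^{\pm1},t^{\pm1}]]$ and hence commute; in particular \eqref{eq:delta:d} is just this commutativity for the pair $\delta(z,t),\delta(w,t)$. Consequently the only substantive point is the identity $\delta(w,t)\delta(z,t)=\delta(w,t)\delta(z,w)$, and the corollary then follows by combining it with \eqref{eq:delta:d}.

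First I would substitute $f(z):=\delta(z,t)$ into the substitution property, so that $f(w)=\delta(w,t)$, obtaining $\delta(z,w)\delta(z,t)=\delta(z,w)\delta(w,t)$. To justify this I would expand the inserted factor as $\delta(z,t)=\sum_{k\in\bb{Z}}z^{-k-1}t^k$ and apply the property to each monomial $z^{-k-1}$, which replaces $z^{-k-1}$ by $w^{-k-1}$ next to $\delta(z,w)$; summing back over $k$ reassembles $\delta(w,t)$ on the right. Intuitively this says that the first argument $z$ of $\delta(z,t)$ may be replaced by $w$ whenever the factor sits next to $\delta(z,w)$.

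Next I would relabel by exchanging the variables $t$ and $z$ throughout, turning $\delta(z,w)\delta(z,t)=\delta(z,w)\delta(w,t)$ into $\delta(t,w)\delta(t,z)=\delta(t,w)\delta(w,z)$, and then apply \eqref{eq:delta:b} to each factor, rewriting $\delta(t,w)=\delta(w,t)$, $\delta(t,z)=\delta(z,t)$ and $\delta(w,z)=\delta(z,w)$. This yields exactly $\delta(w,t)\delta(z,t)=\delta(w,t)\delta(z,w)$, which together with \eqref{eq:delta:d} gives the assertion $\delta(z,t)\delta(w,t)=\delta(w,t)\delta(z,w)$.

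The one genuinely delicate step, and the point I would check before anything else, is the well-definedness of the two- and three-variable products of delta functions that appear: a product of formal distributions need not make sense, and the term-by-term substitution and reindexing above are legitimate only because, in each fixed total degree, only finitely many terms contribute. I would therefore establish well-definedness of $\delta(z,w)\delta(z,t)$ and the associated triple products first (exactly as \eqref{eq:delta:d} does for $\delta(z,t)\delta(w,t)$), since without it the formal identities would carry no meaning.
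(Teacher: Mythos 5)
Your argument is correct and is essentially the paper's own proof: substitute $f(z)=\delta(z,t)$ into the displayed substitution property $\delta(z,w)f(z)=\delta(z,w)f(w)$, exchange $t$ with $z$, and clean up with the symmetry \eqref{eq:delta:b} and the commutativity \eqref{eq:delta:d}. Your extra attention to the well-definedness of the two- and three-variable products is a sensible addition that the paper leaves implicit.
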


As a preliminary of the next subsection,
we introduce several well-known notions.

\begin{dfn}
\begin{enumerate}
\item
For an $R$-module $M$,
an $\End(M)$-valued formal distribution 
$ a(z) = \sum_{n \in \bb{Z}} a_n z^n \in \End(M)[[z^{\pm1}]]$
is called a field on $M$ if
for any $v \in V$ we have 
$a_j . v = 0$ for large enough $j$.


\item
For two distributions $f(z)$ and $g(z)$ in $R[[z^{\pm1}]]$,
we define the normal ordering by
$$
 \nord{f(z)g(w)} := f(z)_{+} g(w) + g(w) f(z)_{-},
$$
where for $f(z)=\sum_{n \in \bb{Z}} f_n z^n$ we used the symbols
$$
 f(z)_+ := \sum_{n \ge 0} f_n z^n, \qquad
 f(z)_- := \sum_{n < 0} f_n z^n.
$$
\item
For distributions $f_i(z)$ ($i =1,2,\ldots,m$) , we define 
the normal ordering by
$$
 \nord{f_1(z_1) f_2(z_2) \cdots f_m(z_m)} := 
 \nord{f_1(z_1) \nord{f_2(z_2) \cdots \nord{f_{m-1}(z_{m-1})f_m(z_m)}\cdots}}.
$$
\end{enumerate}
\end{dfn}

As is well-known, we have 

\begin{fct}
For two fields $a(z),b(z)$ on an $R$-module $M$,
the specialization $z=w$ of the normal ordering $\nord{a(z)b(w)}$,
that is,  $\nord{a(z)b(z)}$, 
is a well-defined field on $M$.
\end{fct}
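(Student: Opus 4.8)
The plan is to show that for two fields $a(z)=\sum a_n z^n$ and $b(z)=\sum b_m z^m$ on $M$, the naive specialization $z=w$ of the normal-ordered product $\nord{a(z)b(w)}$ yields a well-defined field. First I would write out the coefficients of $\nord{a(z)b(w)}$ explicitly. Using the definition $\nord{a(z)b(w)} = a(z)_+ b(w) + b(w)a(z)_-$, the coefficient of $z^p w^q$ is
$$
 \sum_{\substack{n+m=\,? }} \cdots
$$
but the cleaner bookkeeping is to set $z=w$ directly: the coefficient of $w^N$ in $\nord{a(z)b(z)}$ is
$$
 \sum_{n\ge 0} a_n b_{N-n} + \sum_{n<0} b_{N-n} a_n,
$$
where the two sums come respectively from the $a(z)_+$ part and the $a(z)_-$ part. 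The essential point to verify is that this $\End(M)$-valued expression is first a \emph{finite} sum when applied to any $v\in M$, and second that the resulting element of $\End(M)[[w^{\pm1}]]$ is genuinely a field.

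The second (finiteness) sum is harmless because it involves only $a_n$ with $n<0$; more importantly the real work is to show the operator $\nord{a(z)b(z)}_N := \sum_{n\ge0}a_n b_{N-n} + \sum_{n<0}b_{N-n}a_n$ is well-defined on each $v$, and that $\nord{a(z)b(z)}_N v = 0$ for $N$ large. For the well-definedness, fix $v\in M$. In the first sum, since $b(z)$ is a field, $b_{N-n}v=0$ once $N-n$ is large, i.e. once $n$ is very negative; but $n\ge 0$ there, so that is automatic---the obstruction instead is that $n$ ranges over all of $\bb{Z}_{\ge0}$, an infinite set. Here I would invoke that $a(z)$ is a field: applied to the vector $b_{N-n}v$, we need $a_n(b_{N-n}v)=0$ for large $n$, which holds by the field property of $a(z)$ applied to the fixed vector $b_{N-n}v$---but since $b_{N-n}v$ varies with $n$ this needs a uniform bound, and that is exactly the step that requires care. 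The clean argument is: for fixed $v$, only finitely many $b_{N-n}v$ are nonzero across all $n$ and $N$ in the relevant range once $N$ is bounded, so one reduces to finitely many vectors and applies the field property of $a$ to each.

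The main obstacle, then, is controlling the first (normal-ordered-creation) sum $\sum_{n\ge 0}a_n b_{N-n}v$: a priori it is infinite, and naive use of the field property of $a$ alone does not bound it because the vector it acts on changes with $n$. The resolution I would use is a two-step reduction. For fixed $v$, the field property of $b$ gives a bound $K$ with $b_m v = 0$ for $m>K$; thus in $\sum_{n\ge0}a_n b_{N-n}v$ only terms with $N-n\le K$, i.e. $n\ge N-K$, can contribute, so for each fixed $N$ the sum is supported on $n$ in a half-line, and I must show it is actually finite. This is where I would argue more carefully using that both are fields on $M$: choosing $N$ large forces $N-n\ge 0$ for the surviving range, and then the field property of $a$ kills $a_n$ acting on the finitely generated span of the $\{b_{N-n}v\}$. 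I expect this interplay---first truncating with $b$, then truncating with $a$, and verifying that for $N$ large enough \emph{all} surviving terms vanish so that $\nord{a(z)b(z)}v\in M[[w]][w^{-1}]$ has a genuine lower bound on $w$-degrees---to be the genuinely technical heart of the argument, while the remaining bookkeeping (linearity, the symmetry between the $a(z)_+$ and $a(z)_-$ contributions) is routine.
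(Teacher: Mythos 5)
The paper states this as a well-known Fact and gives no proof, so there is nothing to compare line by line; judged on its own, your argument has a genuine gap at exactly the point you flag as the technical heart. You correctly observe that in $\sum_{n\ge0}a_nb_{N-n}v$ the field property of $a$ cannot be applied termwise because the vector $b_{N-n}v$ varies with $n$, but your proposed rescue --- ``only finitely many $b_{N-n}v$ are nonzero,'' so one may apply the field property of $a$ to ``the finitely generated span of the $\{b_{N-n}v\}$'' --- is not justified, and is false under the truncation you yourself invoke. If $b_mv=0$ for $m>K$, the surviving range is $n\ge\max(0,N-K)$, an infinite half-line, and the vectors $b_{N-n}v$ with $N-n\to-\infty$ are creation-type modes that need not vanish and need not span a finite-dimensional space; so no uniform bound on the action of $a_n$ is available and the sum is not shown to be finite.

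The correct mechanism is different: finiteness of the $a(z)_+b(z)v$ term does not come from the field property of $a$ at all, but from the fact that $a(z)_+$ contains only the modes $a_n$ with $n$ in one half-line, while $b(z)v$ is truncated from the \emph{complementary} direction, so the two half-line constraints on $n$ intersect in a finite set (e.g.\ $0\le n\le N+K$ if $b_mv=0$ for $m<-K$). The field property of $a$ is needed only in the other term, to make $a(z)_-v$ a Laurent polynomial with finitely many nonzero coefficients (your remark that the second sum is ``harmless because it involves only $a_n$ with $n<0$'' requires exactly this: finitely many of those modes act nonzero on $v$), and again at the end to verify that the resulting series is itself a field, a step you mention but do not carry out. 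Part of your difficulty is inherited from the paper: its stated field condition ($a_jv=0$ for $j$ large) is incompatible with writing $a(z)=\sum_na_nz^n$ and placing $a(z)_+=\sum_{n\ge0}a_nz^n$ on the left --- read literally, the annihilation half sits on the wrong side and the Fact would fail. One must use the standard orientation (as in Kac), under which the half-line intersection argument goes through and your uniform-bound step becomes unnecessary; as written, your proposal mixes the two conventions and the key finiteness claim does not hold.
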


Thus the following definition makes sense.

\begin{dfn}
For fields $f_i(z)$ ($i =1,2,\ldots,m$) 
on an $R$-module $M$, we define 
the (specialized) normal ordering by
$$
 \nord{f_1(z) f_2(z) \cdots f_m(z)} := 
 \nord{f_1(z) \nord{f_2(z) \cdots \nord{f_{m-1}(z)f_m(z)}\cdots}}.
$$
\end{dfn}

Now we have 

\begin{lem}
Consider an object $\Vd$ of $\Fun(\Fine,\RM)$ defined by
$$
 \Vd(I) := 
  \Se(I)[ \partial_{x_i}^n \delta(x_i,x_j) \mid 
         i \not\equiv j  \text{ in } I, \ n \in \bb{Z}_{\ge0}]
$$
for $I \in \Ob(\Fine)$ and
$$
 \Vd(f): \Vd(I) \longto \Vd(J), \quad 
 x_i \longmapsto x_{f(i)}
$$
for $f \in \Fine(I,J)$.
Here the multiplication of  $\partial_z^j \delta(z,w)$'s are 
given in terms of normal orderings,
and we assume that those normal orderings make sense.
Then $\Vd$ is a $T^*(H_a)$-module,
and also an $\Se$-module in $\Fun(\Fine,\RM,T^*(H_a))$.
Finally, $Vd$ is a singular commutative in $\Fun(\Fine,\RM,T^*(H_a),\Se)$,
that is, an $\AHS$-vertex algebra.
\end{lem}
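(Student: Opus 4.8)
The plan is to follow the pattern already used for $\Se$ in Lemma~\ref{lem:Se} and for $T_*(R[t^{\pm1}])$, treating the three assertions in turn, the only genuinely new ingredient being the $\delta$-function generators. First I would record that $\Vd$ is a well-defined object of $\Fun(\Fine,\RM)$: each $\Vd(f)$ is the ring homomorphism sending $x_i \mapsto x_{f(i)}$, and since a morphism of $\Fine$ preserves inequivalence, $i \not\equiv j$ forces $f(i) \not\equiv f(j)$, so every generator $(x_i - x_j)^{\pm1}$ and $\partial_{x_i}^n \delta(x_i,x_j)$ is carried to a bona fide generator of $\Vd(J)$; functoriality and compatibility with the normal-ordered products is then just relabeling of variables (note that across distinct equivalence classes $f$ is injective, so no inequivalent pair is ever collapsed).

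For the $T^*(\FG)$-module structure I would let the $i$-th copy of $\FG$ act on $\Vd(I)$ by the divided-power derivations $D_i^{(n)}$ in the variable $x_i$, exactly as on $\Se$. Closure of $\Vd(I)$ under this action is the one point that uses the $\delta$-functions: on the Laurent part it is Lemma~\ref{lem:Se}, on a first-argument generator one has $D_i^{(n)}D_i^{(m)}\delta = \binom{n+m}{n}D_i^{(n+m)}\delta$, and when $x_i$ is the second argument of a $\delta$ the symmetry $\delta(x_i,x_j)=\delta(x_j,x_i)$ of \eqref{eq:delta:b} reduces the case to the first, so $D_i^{(n)}$ sends a monomial in the generators to a polynomial in the generators with integer coefficients (hence over any $R$). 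The compatibility \eqref{eq:H-mod} for a morphism $f \colon I \to J$ then reduces, as in Lemma~\ref{lem:Se}, to the comultiplication $\Delta(D^{(k)})=\sum_{i+j=k}D^{(i)}\otimes D^{(j)}$ of $\FG$, i.e. to the divided-power Leibniz rule. The $\Se$-module structure is multiplication, whose $\FG$-equivariance is precisely this Leibniz rule and whose naturality in $I$ is immediate; thus $\Vd$ is an $\Se$-module object in $\Fun(\Fine,\RM,T^*(\FG))$.

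It remains to exhibit the singular commutative ring structure. The multiplication is the singular bilinear map $\Vd(I_1)\otimes\Vd(I_2) \to \Vd(I_1 \sqcup I_2)$ obtained by regarding $\Vd(I_1)$ and $\Vd(I_2)$ as subrings of $\Vd(I_1 \sqcup I_2)$ (the inclusions $I_1,I_2 \hookrightarrow I_1 \sqcup I_2$ preserve inequivalence, so all their generators survive) and multiplying there; this family is $\Se$-bilinear, $\FG$-equivariant and natural, hence by the universal property of $\odot$ it defines a morphism $\Vd \odot \Vd \to \Vd$, with unit the inclusion of $\Se$. Associativity, unitality and singular commutativity $v_1 \odot v_2 = v_2 \odot v_1$ in $\Vd(I_1:I_2)$ all reduce to the corresponding properties of the normal-ordered multiplication on $\Vd(I_1 \sqcup I_2)$: once that ring is known to be commutative and associative, commutativity descends through the extension \eqref{eq:V:ext}, being preserved under the base change $\otimes_{\Se(I_1)\otimes\Se(I_2)}\Se(I_1:I_2)$.

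The main obstacle is therefore the well-definedness, commutativity and associativity of the normal-ordered multiplication on $\Vd(K)$ — exactly the point the statement itself flags with the phrase ``we assume that those normal orderings make sense''. Concretely one must check that products such as $\nord{\partial^{a}\delta(x_i,x_k)\,\partial^{b}\delta(x_j,x_k)}$ are well defined in $\Vd(K)$ and symmetric in the two factors; this is the content of the $\delta$-function identities \eqref{eq:delta:d} and \eqref{eq:delta-delta}, supplemented by \eqref{eq:delta:c} to control the interaction of $\delta$-derivatives with the poles coming from the Laurent generators of $\Se(K)$. I expect essentially all of the labor to sit in this last verification, the module-theoretic parts being direct transcriptions of the computations already carried out for $\Se$.
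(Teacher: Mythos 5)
Your proposal is correct and follows essentially the same route as the paper's (much terser) proof: the $T^*(\FG)$-action by divided-power derivations $D_i^{(n)}\delta = \partial_{x_i}^n\delta/n!$, the $\Se$-module structure by multiplication, and the singular commutativity reduced to the $\delta$-function symmetry \eqref{eq:delta:d}. The extra detail you supply (relabeling of generators under $\Fine$-morphisms, closure under the divided-power action, descent of commutativity through the base change \eqref{eq:V:ext}) is a faithful expansion of what the paper leaves as ``easily checked.''
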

\begin{proof}
Well-definedness as an object of $\Fun(\Fine,\RM)$ is easily checked.
The $T^*(H_a)$-module structure is given by delivation,
that is, $D^{(n)}_i \delta(x_i,x_j)= \partial_i^n \delta(x_i,x_j)/n!$.
(Although we used the fractional symbol $1/n!$,
 the coefficients are always in the commutative ring $R$.)
The $\Se$-module structure is obviously given.
The singular commutativity follows from \eqref{eq:delta:d}.
\end{proof}

Since $\Vd(\{1\}) = R$,
the associated ordinary vertex algebra is the trivial one.

\subsection{Vertex algebras of loop Lie algebras}

In \S\ref{subsec:affine} we construct the (ordinary) vertex algebras 
of affine Kac-Moody Lie algebras 
in the formulation of Borcherds reviewed in the previous subsections.
Before doing so,
we first construct the ordinary vertex algebras
of loop Lie algebras,
that is,
affine Lie algebra without the central extension.

Let $R$ be a fixed commutative ring containing $\bb{Q}$.

For a Lie algebra $\fg$ defined on $R$,
its universal enveloping algebra
is denoted by $U(\fg)$ as usual.
It is a cocommutative bialgebra 
with the comultiplication given by 
$\Delta(A)= A \otimes 1 + 1 \otimes A$ 
for $A \in \fg$.
By the Poincar\'{e}-Birkoff-Witt theorem,
$U(\fg)$ has a basis arising from a fixed 
totally ordered basis of $\fg$.
Hereafter we fix a  total order $\le$ 
on a basis of $\fg$.

The loop algebra of $\fg$ is an $R$-vector space
$$
 \Lg := \fg \otimes_R R[t^{\pm1}]
$$ 
with Lie algebra structure given by 
$$
 [A \otimes t^m,B \otimes t^n] := [A,B]\otimes t^{m+n}
$$
for $A,B \in \fg$ and $m,n \in \bb{Z}$.

Consider the one-dimensional trivial representation 
$R_0 = R v_0$ of $\fg \otimes R[t]$.
Here $v_0$ is the basis vector of $R_0$. 
The induced representation 
$$
 V_0(\fg) := \Ind_{\fg \otimes R[t]}^{\Lg} R_0 
    = U(\Lg) \otimes_{U(\fg \otimes R[t])} R_0
$$
is called the Verma module of $\Lg$.
$V_0(\fg)$ has a cocommutative bialgebra structure 
induced from that on $U(\Lg)$.

The Poincar\'{e}-Birkoff-Witt theorem 
gives the isomorphism
$$
  V_0(\fg) \simeq U(\fg \otimes t^{-1}R[t^{-1}])
$$
of $R$-vector spaces.
In particular, using a basis $\{a_i \mid i = 1,2,\ldots,\dim \fg\}$ 
of $\fg$ and denoting 
\begin{align}
\label{eq:J^a_n}
 J^a_n := a \otimes t^n,
\end{align}
for $a \in \fg$, we have a basis 
\begin{align}
\label{eq:basis}
 \bigl\{ J^{a_1}_{n_1} J^{a_2}_{n_2} \cdots J^{a_j}_{n_j} v_0 \mid
    j \in \bb{Z}_{\ge 0},\ 
    n_1 \le n_2 \le \cdots \le n_j <0,\  
   \text{ if } n_i = n_{i+1} \text{ then } a_i \le a_{i+1} \bigr\}
\end{align}
for $V_0(\fg)$.  

Let us recall the commutative cocommutative bialgebra $\FG = R[D^{(i)}]$ 
given in Definition \ref{dfn:FG}.
The action of $\FG$ on the polynomial ring as derivation
induces another action on $\Lg$.
Written explicitly, $\FG$ acts on $\Lg$ via 
$$
 D^{(i)}(A \otimes t^{-m}) = 
 \binom{m+i-1}{i} A \otimes t^{-m-i}.
$$
This action extends to $U(\Lg)$ and then restricts to $V_0(\fg)$.

\begin{rmk}
\label{rmk:FG-generate}
Under this $\FG$-action,
$V_0(\fg)$ is generated by $\{J^a_{-1} \mid a = 1,2,\ldots,\dim \fg\}$
over $R$.
\end{rmk}

Now we construct an $(\cA,\FG,\Se)$-vertex algebra $\VLg$ from $V_0(\fg)$.
The resulting ordinary vertex algebra (see Fact \ref{fct:Borcherds})
turns out to be the vertex algebra of affine Lie algebra with level $k=0$.

Recalling the functor $T_*$ in Definition \ref{dfn:T_*}, let us set 
$$
 \VLg := T_*\bigl( V_0(\fg) \bigr).
$$
In particular, 
$\VLg(\{1\})$ is the $R$-vector space $V_0(\fg)$, and 
$\VLg(\{1,2,\ldots,n\})$ is the $n$-th tensor product of $\VLg(\{1\})$.
By Example~\ref{eg:bialgebra:module} (2), 
$\VLg$ is an object of 
$\Fun(\Fine,\RM,T^*(\FG))$,
although it is not a ring object 
since the multiplicative structure on $ V_0(\fg)$ is not commutative.

We define the singular tensor product on $\VLg$ 
with the help of the trivial $\AHS$-vertex algebra $\Vd$ 
constructed in the previous subsection.
We will use the notation \eqref{eq:J^a_n} and \eqref{eq:basis} 
for elements of $\VLg(\{1\})=V_0(\fg)$.

\begin{lem}
\begin{enumerate}
\item
The $\fg$-valued distribution 
$$
  \partial_x^i J^a(x)
 := \partial_x^i \sum_{n \in \bb{Z}}J^a_{-n-1}x^n
 = a \otimes \partial_x^i\delta(t,x) 
\quad
\in U(\fg) \otimes \Vd(\{0:1\})
$$ 
with $a \in \fg$ and $i \in \bb{Z}_{\ge0}$
is a field on $V_0(\fg)$.
(Here we used $\{0:1\}$ to indicate the set of two elements with two equivalent classes,
 and the associated indeterminants are $t$ and $x$.)

\item
The correspondence 
$$
  J^{a_1}_{-n_1} J^{a_2}_{-n_2} \cdots J^{a_j}_{-n_j} v_0 
  \longmapsto
  \dfrac{1}{(n_1-1)!\cdots (n_j-1)!}
  \nord{ \partial_{x}^{n_1-1} J^{a_1}(x) \partial_{x}^{n_2-1} J^{a_2}(x)
          \cdots \partial_{x}^{n_j-1} J^{a_n}(x)}
$$
gives an isomorphism
$$
\theta: V_0(\fg) \longto U(\fg) \otimes \Vd(\{0:1\})
$$
of $R$-modules.

\item
$\theta$ extends to an isomorphism 
$$
 \theta: \VLg \longto T_*U(\fg) \otimes V_\delta
$$
of objects in $\Fun(\Fine,\RM,T^*(H_a))$.
In the right hand side $T_*U(\fg)$ is regarded as 
a trivial $T^*(H_a)$-module in $\Fun(\Fine,\RM)$.
\end{enumerate}
\end{lem}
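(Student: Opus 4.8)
The plan is to prove the three parts in sequence, since each builds on the previous one. For part (1), I would verify that $\partial_x^i J^a(x)$ is a field by checking the finiteness condition from the definition: for any $v \in V_0(\fg)$, the modes $J^a_{-n-1}$ with $n$ large enough annihilate $v$. This is immediate from the PBW basis \eqref{eq:basis}, because positive loop modes $J^a_m$ with $m \ge 0$ act as lowering operators on the Verma module and eventually kill any fixed vector. The identification of $\sum_{n} J^a_{-n-1} x^n$ with $a \otimes \delta(t,x)$ and the $i$-th derivative with $a \otimes \partial_x^i \delta(t,x)$ follows directly from the definition of the formal delta function $\delta(t,x) = \sum_n t^{-n-1}x^n$ together with the action of $\FG$ on $\Vd$ by derivation, as established in the $\Vd$ lemma.

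For part (2), the key point is that $\theta$ is built on the PBW monomial basis \eqref{eq:basis} and sends each basis monomial to a normally ordered product of (derivatives of) the fields $J^a(x)$. The strategy is to show $\theta$ is a well-defined $R$-linear bijection by exhibiting it as a bijection between bases. The domain has the explicit PBW basis, while the codomain $U(\fg) \otimes \Vd(\{0:1\})$ should have a matching basis given by the normally ordered products $\nord{\partial_x^{n_1-1}J^{a_1}(x) \cdots \partial_x^{n_j-1}J^{a_j}(x)}$ subject to the same ordering constraints $n_1 \le \cdots \le n_j$ (with the secondary order on the $a_i$ when the $n_i$ coincide). I would argue that the normal ordering, which places annihilation modes to the right, combined with the leading-term analysis, makes this assignment upper-triangular with respect to a suitable filtration, hence invertible; the factorials $1/(n_i - 1)!$ are precisely the normalization needed to match the $\FG$-action $D^{(i)} = \partial^i/i!$. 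The main obstacle lies here: one must carefully check that the normally ordered products on the right genuinely form an $R$-basis of $U(\fg) \otimes \Vd(\{0:1\})$ in bijection with \eqref{eq:basis}, which requires understanding how the normal ordering interacts with the repeated-index cases and confirming no linear dependence is introduced.

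For part (3), I would extend $\theta$ from the single-point value $\theta: V_0(\fg) \to U(\fg) \otimes \Vd(\{0:1\})$ to a morphism of the full functors $\VLg = T_*(V_0(\fg)) \to T_* U(\fg) \otimes \Vd$ in $\Fun(\Fine, \RM, T^*(H_a))$. The plan is to define $\theta$ componentwise on each $\VLg(I) = V_0(\fg)^{\otimes |I|}$ using the single-point $\theta$ on each tensor factor, and then verify two compatibilities. First, naturality: $\theta$ must commute with the maps $\VLg(f)$ for morphisms $f$ in $\Fine$, which reduces by Example~\ref{eg:f} to checking the elementary morphisms (identities, surjections to a point, symmetries, injections); this is routine given the explicit forms. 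Second, $\theta$ must commute with the $T^*(H_a)$-action, i.e.\ intertwine the derivation action of each $D^{(i)}$ on $V_0(\fg)$ with that on $\Vd$; this is guaranteed by the $\FG$-equivariance already built into the factorial normalization of part (2), since $\theta(D^{(i)} \cdot) = D^{(i)} \theta(\cdot)$ follows from $\partial_x^i J^a(x) = a \otimes \partial_x^i \delta(t,x)$ and Remark~\ref{rmk:FG-generate}. That $\theta$ is an isomorphism on each component is inherited from part (2) by taking tensor powers. The genuinely delicate verification remains the basis-matching in part (2); once that is secured, the functorial and equivariance checks in part (3) are formal.
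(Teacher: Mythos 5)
Your proposal is correct and follows essentially the same route as the paper, whose own proof is only a three-line sketch: part (1) is dismissed as well-known, part (2) as obvious from the PBW basis description, and part (3) as following from $H_a$-equivariance of $\theta$. You simply expand each of these steps (the field condition via the PBW basis, the basis-matching for $\theta$, and the componentwise extension with the equivariance and naturality checks), and the one point you flag as delicate --- that the normally ordered monomials form a basis of the target --- is precisely what the paper treats as immediate.
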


\begin{proof}
The first part is well-known,
and the second part is 
obvious from the description of the basis on $V_0(\fg)$.
For the third part,
it is enough to notice that the isomorphism $\theta$ is 
equivalent with respect to the $H_a$-actions.
\end{proof}

A typical element of $\VLg(\{1,2\})$ is 
$J^a_{-1}v_0 \otimes J^b_{-1}v_0 = J^a(x_1) \otimes J^b(x_1)$,
and one of $\VLg(\{1:2\})$ is $J^a(x_1)J^b(x_2)$ 
Here the expression $J^a(x_1)J^b(x_2)$ means the product (or composition)
of fields on $V_0(\fg)$. 
The strict definition is given by the following lemma.

\begin{lem}
Define a bioperator 
$$
 \bullet: \VLg(\{1\}) \bigotimes_{\RM} \VLg(\{2\})
          \longto \VLg(\{1,2\})
$$ 
by
$$
 J^a_{-1}v_0 \bullet J^b_{-1}v_0
 := \theta^{-1}(\nord{J^a(x_1)J^b(x_2)})
$$
with $a,b \in \fg$.
Then it extends to a bioperator 
$$
 \bullet: \VLg \bigotimes_{\Fun(\Fine,\RM)} \VLg
          \longto \VLg
$$ 
\end{lem}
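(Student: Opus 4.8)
The plan is to transport the entire multiplicative structure through the isomorphism $\theta\colon \VLg \to T_*U(\fg) \otimes \Vd$ of the previous Lemma, exploiting that on the right-hand side the ``algebraic'' factor $U(\fg)$ and the ``geometric'' factor $\Vd$ can be multiplied separately. First I would record that $\Vd$ is, by its defining Lemma, an $\AHS$-vertex algebra, hence a commutative ring object, so its product is a well-defined natural family, while $U(\fg)$ is an associative $R$-algebra. I would then define a bioperator on $T_*U(\fg) \otimes \Vd$ by multiplying the $U(\fg)$-components with the (possibly noncommutative) product of $U(\fg)$ and the $\Vd$-components with the commutative product of $\Vd$, and finally set $\bullet := \theta^{-1}\circ(\text{this})\circ(\theta\otimes\theta)$.

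Second, I would check agreement with the prescribed value on generators, i.e.\ that $\theta(J^a_{-1}v_0 \bullet J^b_{-1}v_0)=\nord{J^a(x_1)J^b(x_2)}$. Since the previous Lemma gives $\theta(J^a_{-1}v_0)=a\otimes\delta(t,x)$, this reduces to the statement that the normal-ordered product of the fields $J^a(x_1)$ and $J^b(x_2)$ equals (the product $ab$ in $U(\fg)$) tensored with (the normal-ordered product of the two $\delta$-distributions in $\Vd$). This is exactly the definition of $\nord{\cdot}$ together with the earlier Fact guaranteeing that the specialization making $\nord{a(z)b(z)}$ a well-defined field exists.

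Third, extension to all of $\VLg$ is by $R$-bilinearity together with $\FG$-equivariance. By Remark~\ref{rmk:FG-generate} every element of $\VLg(\{1\})=V_0(\fg)$ is built from the $J^a_{-1}v_0$ by the action of $\FG$ and iterated products; since $\theta$ is an isomorphism of $T^*(\FG)$-modules and the $\FG$-action is by derivations, so that the Leibniz rule holds for both the $U(\fg)$- and the $\Vd$-product, the value of $\bullet$ on a general pair is forced. I would verify that this forcing is consistent, i.e.\ independent of the chosen expression of an element, using the PBW basis \eqref{eq:basis} and the derivation property; naturality in $\Fine$ and $\Se$-linearity then follow because the corresponding properties already hold for the product on $\Vd$ and are respected by $\theta$.

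The main obstacle I expect is precisely the naturality in $\Fine$. A naive component-wise product on $T_*U(\fg)\otimes\Vd$ need not commute with the structure maps $\VLg(f)$, because the transition maps of $T_*U(\fg)$ involve the multiplication of the noncommutative algebra $U(\fg)$, so that $T_*U(\fg)$ is not itself a ring object (cf.\ the Remark after Lemma~\ref{lem:cro}). The point of factoring through $\Vd$ is that the locality encoded there --- concretely the commutativity \eqref{eq:delta:d} and the vanishing \eqref{eq:delta:c} --- compensates for this, so that $\bullet$, although not a genuine ring multiplication, is a well-defined natural transformation compatible with the $T^*(\FG)$- and $\Se$-module structures. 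Checking this compatibility against the generating morphisms of $\Fine$ (unit, multiplication, symmetry) is the technical heart of the argument.
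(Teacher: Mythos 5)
The paper states this lemma without any proof at all, so there is nothing to compare line by line; judged on its own, your outline is essentially the argument the surrounding lemmas are set up to support, and it is sound in its main thrust. Transporting the operation through $\theta$, using the (singular) commutative ring structure of $\Vd$ on one tensor factor, checking the prescribed value on the generators $J^a_{-1}v_0$, and then extending by bilinearity and $T^*(\FG)$-equivariance (which, via the comultiplication of $\FG$, is exactly the divided-power Leibniz rule) using Remark~\ref{rmk:FG-generate} is the intended route, and you correctly locate the technical heart in the naturality over $\Fine$, where the failure of $T_*U(\fg)$ to be a ring object must be absorbed by the locality relations \eqref{eq:delta:c} and \eqref{eq:delta:d} in $\Vd$. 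One imprecision worth fixing: for the component $\VLg(I_1)\otimes\VLg(I_2)\to\VLg(I_1\sqcup I_2)$ of the bioperator, the $T_*U(\fg)$-factors are merely juxtaposed, $U(\fg)^{\otimes I_1}\otimes U(\fg)^{\otimes I_2}\to U(\fg)^{\otimes I_1\sqcup I_2}$, and only the $\Vd$-factors are actually multiplied (by normal ordering); the noncommutative product of $U(\fg)$ enters only afterwards, through the structure maps $f_*$ along surjections such as $\{1,2\}\to\{1\}$. Stating the definition this way makes your second paragraph (agreement on generators) immediate and isolates the naturality check to precisely those surjections, which is where your fourth paragraph's concern genuinely lives. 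With that adjustment, and an explicit verification that the extension to PBW monomials is independent of the chosen expression (which you already flag), the argument is complete.
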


\begin{lem}
Define the singular tensor product 
$$
 \odot: \VLg(\{1\}) \bigotimes_{\RM} \VLg(\{2\})
          \longto \VLg(\{1:2\})= \VLg(\{1,2\})[(x_1-x_2)^{\pm1}]
$$
by
\begin{align}
\label{eq:stp}
 J^a_{-1}v_0 \odot J^b_{-1}v_0
 := \dfrac{1 \otimes J^{[a,b]}_{-1}v_0}{x_1-x_2} +  J^a_{-1}v_0 \bullet J^b_{-1}v_0
\end{align}
for $a,b \in \fg$.
Then it extens to the singular tensor product on $\VLg =  T_*\bigl( V_0(\fg) \bigr)$. 
\end{lem}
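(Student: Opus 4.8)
The plan is to reduce everything to the basic currents and to the delta-function calculus of \S2.2. By Remark~\ref{rmk:FG-generate}, the module $V_0(\fg) = \VLg(\{1\})$ is generated over $R$ out of the elements $J^a_{-1}v_0$ by the $\FG$-action, which produces the higher modes $J^a_{-n}v_0 = D^{(n-1)}J^a_{-1}v_0$, together with the regular product $\bullet$ of the preceding lemma. A singular tensor product must commute with the $T^*(\FG)$- and $\Se$-actions and must be associative, so its value on an arbitrary pair of elements is forced once it is prescribed on the generators $J^a_{-1}v_0$. Thus \eqref{eq:stp} determines $\odot$ uniquely if it exists, and the task is existence together with the defining properties.

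For existence I would work in the model $\theta\colon \VLg \simto T_* U(\fg) \otimes \Vd$, where each element is a normal-ordered monomial in the fields $\partial_x^n J^a(x)$, and define $\odot$ by the operator product expansion: the singular tensor product of two such monomials is the sum over all single contractions (Wick's rule), the elementary contraction of $\partial_{x_1}^i J^a(x_1)$ against $\partial_{x_2}^j J^b(x_2)$ being the polar term $\partial_{x_1}^i\partial_{x_2}^j\bigl(J^{[a,b]}(x_2)/(x_1-x_2)\bigr)$ read off from \eqref{eq:stp}, while the uncontracted term is exactly $\bullet$. This manifestly commutes with the $\FG$-action, since $D^{(i)}$ acts as $\partial_x^i/i!$ and the elementary contraction is assembled from the same derivations, and it is $\Se$-linear by inspection; it therefore lands in $\VLg(\{1:2\}) = \VLg(\{1,2\})[(x_1-x_2)^{\pm1}]$ compatibly with the colimit presentation of $\odot$, and extends from $\{1:2\}$ to all objects of $\Fine$ as in the previous two lemmas.

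The consistency of this Wick-type definition — independence of the chosen monomial presentation, and associativity — rests on the delta-function identities of \S2.2, above all \eqref{eq:delta:d} and \eqref{eq:delta-delta}, which say that contractions at distinct points may be carried out in any order; this is precisely why $\Vd$ was introduced as an auxiliary $\AHS$-vertex algebra, and I would run the bookkeeping inside the $T_*U(\fg)\otimes\Vd$ model where these identities are directly available.

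The main obstacle is the singular commutativity $u \odot v = v \odot u$ in $\VLg(\{1:2\})$. On generators it is the assertion $J^a(x_1)J^b(x_2) = J^b(x_2)J^a(x_1)$ in the localization $\VLg(\{1,2\})[(x_1-x_2)^{\pm1}]$, that is, the locality of the currents, which follows from the loop-algebra relation $[J^a_m,J^b_n]=J^{[a,b]}_{m+n}$ in the form $[J^a(x_1),J^b(x_2)] = J^{[a,b]}(x_2)\delta(x_1,x_2)$: since the formal delta function is killed on inverting $x_1-x_2$, the two orderings are identified. The delicate point, even on generators, is that the pole $J^{[a,b]}(x_2)/(x_1-x_2)$ carries its numerator at $x_2$ whereas the swapped product carries it at $x_1$; the regular discrepancy $\bigl(J^{[a,b]}(x_1)-J^{[a,b]}(x_2)\bigr)/(x_1-x_2)$ is exactly absorbed by the difference of the two normal-ordered products, by way of \eqref{eq:delta:c}. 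Propagating this symmetry from the generators to arbitrary monomials is where I expect the real effort: here I would invoke the associativity established above together with \eqref{eq:delta:d}, so that commutativity for products follows from commutativity of the elementary contractions. Once this is in hand, $\odot$ is a singular commutative multiplication extending \eqref{eq:stp}, as asserted.
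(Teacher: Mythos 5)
Your core mechanism --- that a singular tensor product must be compatible with the $T^*(\FG)$-action and the $\Se$-action, so that its value on arbitrary elements is forced by the prescription \eqref{eq:stp} on the generators $J^a_{-1}v_0$, because $V_0(\fg)$ is $\FG$-generated by these (Remark~\ref{rmk:FG-generate}) --- is exactly the paper's proof, which consists of that single observation. Where you differ is in doing substantially more. First, you supply an explicit existence argument, transporting everything through $\theta$ into $T_*U(\fg)\otimes\Vd$ and defining $\odot$ by a Wick-type sum over elementary contractions, with consistency resting on the delta-function identities of \S 2.2; the paper asserts ``we immediately have the conclusion'' and leaves implicit the check that the forced extension is well defined, i.e.\ independent of how an element is presented in terms of $D^{(n)}$'s and $\bullet$-products of generators. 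Your construction makes that point honest and is in the spirit of how the paper actually computes with $\theta$ in the surrounding text, so this is a genuine (and welcome) strengthening rather than a detour. Second, your final paragraph establishes the singular commutativity $u\odot v=v\odot u$; this is not part of the present lemma but is precisely the statement of the lemma immediately following it in the paper, whose proof carries out on the generators essentially your computation (the regular discrepancy $(J^{[a,b]}(x_1)-J^{[a,b]}(x_2))/(x_1-x_2)$ cancelling against the difference of the two normal orderings). Nothing in your argument is wrong; you have proved this lemma by the paper's route plus an existence verification the paper omits, and you have additionally absorbed the subsequent commutativity lemma into your write-up.
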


\begin{proof}
Recall that in the definition of the singular tensor product
we have the compatibility of $T^*(H_a)$-action and $\Se$-action.
Since $V_0(\fg)$ is generated by $J^a_{-1}$ under the $H_a$-action,
we immediately have the conclusion.
\end{proof}


\begin{rmk}
The singular tensor product $\odot$ gives 
the composition of fields on $V_0(\Lg)$.
It looks as 
$$
 J^a(z)\odot J^b(w) = \dfrac{J^{[a,b]}(w)}{z-w} + \nord{J^a(z)J^b(w)},
$$
which is the OPE usually used in calculations by physicists.
\end{rmk}

\begin{lem}
The singular tensor product on $\VLg$ is commutative,
so that $\VLg$ is an $(\RM,\FG,\Se)$-vertex algebra.
\end{lem}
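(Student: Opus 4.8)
The plan is to verify that the singular tensor product $\odot$ defined on generators $J^a_{-1}v_0 \odot J^b_{-1}v_0$ in \eqref{eq:stp} is symmetric in the two arguments, and that this symmetry propagates to the full singular tensor product on $\VLg = T_*(V_0(\fg))$. By the definition of an $\AHS$-vertex algebra as a singular commutative ring object (Definition \ref{dfn:scom}), commutativity of $\odot$ is precisely the content that must be established. First I would reduce to generators: since $V_0(\fg)$ is generated by $\{J^a_{-1}v_0\}$ under the $\FG$-action (Remark \ref{rmk:FG-generate}) and $\odot$ is compatible with both the $T^*(\FG)$-action and the $\Se$-action, it suffices to check commutativity $J^a_{-1}v_0 \odot J^b_{-1}v_0 = J^b_{-1}v_0 \odot J^a_{-1}v_0$ on these generators and then invoke the extension to all of $\VLg$.

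The core computation compares the two orderings. On generators we have
\begin{align*}
 J^a_{-1}v_0 \odot J^b_{-1}v_0
 &= \frac{1 \otimes J^{[a,b]}_{-1}v_0}{x_1-x_2}
    + J^a_{-1}v_0 \bullet J^b_{-1}v_0,
\\
 J^b_{-1}v_0 \odot J^a_{-1}v_0
 &= \frac{1 \otimes J^{[b,a]}_{-1}v_0}{x_2-x_1}
    + J^b_{-1}v_0 \bullet J^a_{-1}v_0,
\end{align*}
where in the second line the roles of the two points are exchanged. Using $J^{[b,a]} = -J^{[a,b]}$ and $x_2 - x_1 = -(x_1-x_2)$, the singular (polar) terms in the two expressions coincide. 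The remaining task is to show that the two normal-ordered products $J^a_{-1}v_0 \bullet J^b_{-1}v_0$ and $J^b_{-1}v_0 \bullet J^a_{-1}v_0$, after applying the symmetry $\sigma$ that swaps the two tensor factors (equivalently, the relabelling of points), agree in $\VLg(\{1:2\})$. Under the isomorphism $\theta$, the bioperator $\bullet$ is realized as the specialized normal ordering $\nord{J^a(x_1)J^b(x_2)}$ of fields on $V_0(\fg)$, so this reduces to the commutation of normal-ordered products of the two local fields modulo the polar term—exactly the statement encoded by the delta-function relation \eqref{eq:delta:d}, which already guaranteed the singular commutativity of $\Vd$.

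The main obstacle will be bookkeeping the interaction between the normal-ordering discrepancy and the singular term: swapping $J^a(x_1)J^b(x_2)$ to $J^b(x_2)J^a(x_1)$ produces their commutator, which by the loop algebra relation $[J^a(z), J^b(w)] = J^{[a,b]}(w)\,\delta(z,w)$ is supported on the diagonal and thus contributes precisely the polar part $J^{[a,b]}_{-1}v_0/(x_1-x_2)$ after expansion. Matching this commutator contribution against the difference of the two polar terms is where the computation of \eqref{eq:stp} is forced, and verifying the signs carefully is the delicate point. Once commutativity holds on generators, the compatibility of $\odot$ with the $\FG$- and $\Se$-actions—together with the fact that $\Fin$ has $\sqcup$ as coproduct so that $\odot$ agrees with $\otimes$ on $\Fin$—extends the identity to all elements, yielding that the extension \eqref{eq:V:ext} of $\VLg$ is a singular commutative ring object in $\Fun(\Fine,\RM,T^*(\FG),\Se)$. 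Hence $\VLg$ is an $(\RM,\FG,\Se)$-vertex algebra.
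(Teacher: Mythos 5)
Your strategy is the same as the paper's: reduce to the generators $J^a_{-1}v_0$ via the $\FG$-action and the compatibility of $\odot$ with the $T^*(\FG)$- and $\Se$-actions, then verify commutativity on generators by a direct field computation under $\theta$. The paper executes this by splitting the fields into their $\pm$ parts, so that the difference of the two normal orderings becomes $[J^a(x_1)_+,J^b(x_2)_+]+[J^b(x_2)_-,J^a(x_1)_-]$, which by the loop-algebra relation is exactly the negative of the difference of the two polar terms.

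There is, however, one claim in your middle paragraph that, taken literally, derails the argument: the polar terms of $J^a_{-1}v_0\odot J^b_{-1}v_0$ and $J^b_{-1}v_0\odot J^a_{-1}v_0$ do \emph{not} coincide. In the first, the current $J^{[a,b]}$ sits at the point $x_2$ (the term is $1\otimes J^{[a,b]}_{-1}v_0$), while after swapping, $J^{[b,a]}$ sits at $x_1$; using $J^{[b,a]}=-J^{[a,b]}$ and $x_2-x_1=-(x_1-x_2)$ their difference is $\bigl(J^{[a,b]}(x_2)-J^{[a,b]}(x_1)\bigr)/(x_1-x_2)$, which is regular but nonzero. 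Correspondingly, the two normal-ordered products do not agree on their own, so the ``remaining task'' you state — showing $\nord{J^a(x_1)J^b(x_2)}$ and the swapped normal ordering agree — is false as posed; the whole content of the lemma is that these two nonzero discrepancies cancel each other. Your final paragraph does recognize this (``matching this commutator contribution against the difference of the two polar terms''), so the plan is salvageable, but the sentence asserting that the polar terms coincide must be discarded, and the cancellation must actually be carried out via the $\pm$-decomposition of the fields as in the paper. Note also that the relevant input here is the loop-algebra commutator together with that decomposition, not the relation \eqref{eq:delta:d}, which governed only the commutativity of $\Vd$.
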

\begin{proof}
By the definition of $\Se$,
it is sufficient to show 
$(x_1-x_2)^N v_1 \odot v_2 = (x_1-x_2)^N v_2 \odot v_1$
with some $N$ for any $v_1,v_2 \in \VLg(\{1\})$.
We demonstrate only for the case 
$v_1 = J^a_{-1}v_0$ and $v_2 = J^b_{-1}v_0$,
since the other cases follows by the $H_a$ action (as derivation) 
and the normal ordering 
(that is, by usual field calculs).
By the formula \eqref{eq:stp},  we have
\begin{align*}
&\theta(J^a_{-1}v_0 \odot  J^b_{-1}v_0 -  J^b_{-1}v_0 \odot  J^a_{-1}v_0)
= \Bigl(\dfrac{J^{[a,b]}(w)}{x_1-x_2} + \nord{J^a(x_1)J^b(x_2)}\Bigr) 
 - \Bigl(\dfrac{J^{[b,a]}(z)}{x_2-x_1} + \nord{J^a(x_2)J^b(x_1)}\Bigr) 
\\
&=\dfrac{J^{[a,b]}(x_2) - J^{[b,a]}(x_1)}{x_1-x_2}
 +[J^a(x_1)_+ , J^b(x_2)_+] + [J^b(x_2)_- , J^a(x_1)_-]
\\
&=\dfrac{J^{[a,b]}(x_2)   - J^{[b,a]}(x_1)}{x_1-x_2}
 -\dfrac{J^{[a,b]}(x_1)_+ - J^{[a,b]}(x_2)_+}{x_1-x_2}
 -\dfrac{J^{[b,a]}(x_2)_- - J^{[b,a]}(x_1)_-}{x_2-x_1}
\\
&=0.
\end{align*}
Thus we have the conclusion.
\end{proof}

Next we study the ordinary vertex algebra associated to $\VLg$.
Recall the proof of Fact \ref{fct:Borcherds},
in particular the construction of vertex operator $Y(A,x)$
using the Talor expansion formula \eqref{eq:taylor}.
For $A=J^a_{-1}v_0$, we can compute 
\begin{align*}
&Y(J^a_{-1}v_0,x_1)J^b_{-n}v_0
 =\sum_{i,j\ge0}x_1^i x_2^j 
   f_{*}\bigl(D_1^{(i)}D_2^{(j)}(J^a_{-1}v_0 \odot J^b_{-n}v_0)\bigr)
  \Bigr|_{x_2=0}
\\
&=\sum_{i,j\ge0}
   \binom{n+j-1}{j}x_1^ix_2^j f_{*}(J^a_{-1-i}v_0 \odot J^b_{-n-j}v_0)
  \Bigr|_{x_2=0}
\\
&=\sum_{i,j\ge0}\dfrac{1}{i!j!(n-1)!} x_1^ix_2^j 
   \theta^{-1}f_{*}\Bigl( 
    \partial_{x_1}^i\partial_{x_2}^{j+n-1} \dfrac{ J^{[a,b]}(x_2)}{x_1-x_2} +
    \nord{\partial_{x_1}^{i}J^a(x_1)\partial_{x_2}^{j+n}J^b(x_2)}
   \Bigr) 
  \Bigr|_{x_2=0}
\\
&=\sum_{i,j\ge0}\dfrac{1}{j!(n-1)!} x_1^ix_2^j 
  \theta^{-1}\Bigl( 
   \partial_{x_1}^{j+n-1} \dfrac{J^{[a,b]}(x_1)}{(x_1-x_2)^{i+1}}
   +\dfrac{1}{i!}\nord{\partial_{x_1}^{i}J^a(x_1)\partial_{x_1}^{j+n}J^b(x_1)}
  \Bigr) 
  \Bigr|_{x_2=0}
\\
&=\sum_{0\le k\le n-1}\dfrac{1}{(n-k-1)!} x_1^{-k-1} 
   \theta^{-1}\Bigl( \partial_{x_1}^{n-k-1} J^{[a,b]}(x_1) \Bigr)
  +\sum_{i\ge0}\dfrac{1}{i!(n-1)!} x_1^{-j} 
   \theta^{-1}\Bigl( 
     \nord{\partial_{x_1}^{i}J^a(x_1)\partial_{x_1}^{n}J^b(x_1)}
    \Bigr) 
\\
&=\sum_{i<0}  x_1^{i} J^{[a,b]}_{-i-n-1}v_0
 +\sum_{i\ge0} x_1^{i} J^a_{-i-1}J^b_{-n}v_0.
\\
\end{align*}

On the other hand,
in the ordinary vertex algebra of loop Lie algebra, 
one associates to $J^{a}_{-1}v_0$ the field $J^a(z)$,
which acts on $J^b_{-n}v_0$ ($n>0$) as
\begin{align*}
 J^{a}(z)J^b_{-n}v_0 
& =\sum_{i\ge -n}z^i J^a_{-i-1}J^b_{-n} v_0 
\\
&=\sum_{i<0}  z^{i} J^{[a,b]}_{-i-n-1}v_0
 +\sum_{i\ge0} z^{i} J^a_{-i-1}J^b_{-n}v_0.
\end{align*}
Thus we have
$$
 Y(J^a_{-1}v_0,z)J^b_nv_0 =  J^{a}(z) J^b_nv_0.
$$
Similarly we have 
$$
 Y(J^a_{-1}v_0,z)A =  J^{a}(z) A
$$
for any $A  \in V_0(\fg)$.
Then by Goddard's uniqueness Theorem 
we have 
$$
 Y(J^a_{-1}v_0,z) =  J^{a}(z)
$$
as fields.
Finally by the reconstruction theorem \cite[2.3.11]{FB},
we conclude

\begin{prop}
The ordinary vertex algebra structure on $\VLg(\{1\})$ 
coincides with the ordinary vertex algebra $V_0(\fg)$.
\end{prop}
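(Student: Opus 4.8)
The plan is to derive the Proposition from the reconstruction theorem \cite[2.3.11]{FB}, using the explicit matching of generating fields carried out just above. By Fact~\ref{fct:Borcherds}, $\VLg(\{1\})=V_0(\fg)$ is an ordinary vertex algebra whose vacuum is $v_0$, whose translation operator is $T=D^{(1)}$ coming from the $\FG$-action, and whose vertex operators $Y(\,\cdot\,,z)$ are produced from the Taylor expansion \eqref{eq:taylor} of the singular tensor product \eqref{eq:stp}. The target is the standard loop-algebra vertex algebra $V_0(\fg)$, whose structure is pinned down by assigning to each generator $J^a_{-1}v_0$ the current field $J^a(z)=\sum_{n\in\bb{Z}}J^a_{-n-1}z^n$.

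First I would record, via Remark~\ref{rmk:FG-generate}, that the vectors $J^a_{-1}v_0$ ($a=1,\ldots,\dim\fg$) generate $V_0(\fg)$ under the $\FG$-action, so they constitute a generating set in the sense required by reconstruction. Next I would invoke the computation preceding the statement: expanding \eqref{eq:stp} through \eqref{eq:taylor} yields $Y(J^a_{-1}v_0,z)A=J^a(z)A$ for every $A\in V_0(\fg)$, and Goddard's uniqueness theorem \cite[3.1.1]{FB} then upgrades this to the identity $Y(J^a_{-1}v_0,z)=J^a(z)$ of fields. At this point the two ordinary vertex algebra structures share the same vacuum, the same translation $T$, the same generating vectors, and the same generating fields.

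Finally I would apply the reconstruction theorem: an ordinary vertex algebra is determined by such generating data, with $Y(\,\cdot\,,z)$ of an arbitrary monomial $J^{a_1}_{-n_1}\cdots J^{a_j}_{-n_j}v_0$ forced to be the corresponding normal-ordered product of derivatives $\nord{\partial_z^{n_1-1}J^{a_1}(z)\cdots\partial_z^{n_j-1}J^{a_j}(z)}$ divided by $\prod_k(n_k-1)!$. Since both structures realize the same generating fields $J^a(z)$ and the same $v_0$ and $T$, reconstruction forces equality of the full vertex operator maps, and hence the two vertex algebra structures coincide.

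The step I expect to be most delicate is the passage from the single computation $Y(J^a_{-1}v_0,z)J^b_{-n}v_0$ to the assertion $Y(J^a_{-1}v_0,z)A=J^a(z)A$ for all $A$: this requires combining the $\FG$-action, which converts the operators $D^{(i)}$ into $\partial_z$-derivatives of fields, with the normal-ordered multiplication encoded by the isomorphism $\theta$, and checking that the resulting expansion reproduces the field product $J^a(z)A$ in both its singular and regular parts. Once this is granted, the remaining hypotheses of the reconstruction theorem---the $T$-covariance $[T,J^a(z)]=\partial_z J^a(z)$, mutual locality (already guaranteed by the singular commutativity of $\VLg$), and the vacuum normalization---are routine, and the conclusion follows immediately.
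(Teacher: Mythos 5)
Your proposal follows essentially the same route as the paper: compute $Y(J^a_{-1}v_0,z)$ on generators via the Taylor expansion of the singular tensor product, match it with the current field $J^a(z)$, upgrade to an identity of fields by Goddard's uniqueness theorem, and conclude by the reconstruction theorem of \cite[2.3.11]{FB}. The step you flag as delicate (extending from $Y(J^a_{-1}v_0,z)J^b_{-n}v_0$ to arbitrary $A$) is precisely the point the paper passes over with ``similarly,'' so your account is faithful to, and if anything slightly more explicit than, the paper's own argument.
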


\subsection{Vertex algebras of affine Kac-Moody  Lie algebras}
\label{subsec:affine}




Let us return to the $(\RM,\FG,\Se)$-vertex algebra $\VLg$
constructed from the Verma module $V_0(\fg)$ of the loop algebra $\Lg$. 
We will use the twisting operation reviewed in \S\ref{subsec:twist}
to construct another $(\RM,\FG,\Se)$-vertex algebra 
such that it corresponds to the ordinary vertex algebra $V_k(\fg)$ 
of affine Kac-Moody Lie algebra with arbitrary level $k$.

Let $R$ be a commutative field again.
Let $k$ be an arbitrary element of $R$,
which will be the level of affine Lie algebra $\ghat$.
Let us fix an invariant symmetric bilinear form on $\fg$
and denote it by $(\;,\;)$.

\begin{dfn}
Let $r$ be an $\FG$-invariant $S(\{1:2\})$-valued 
$R$-bicharacter of $V_0(\fg)$ such that 
\begin{align}
\label{eq:bichar:al}
 r(J^a_{-1}v_0 \otimes J^b_{-1}v_0) =  \dfrac{k(J^a,J^b)}{(x_1-x_2)^2}.
\end{align}
The $\FG$-invariant bicharacter $r$ is uniquely determined from this formula
since $V_0(\fg)$ is $\FG$-generated by 
$\{J^a_{-1}  \mid a=1,2,\ldots,\dim \fg\}$,
as we noted in Remark~\ref{rmk:FG-generate}.
\end{dfn}

Then by Fact \ref{fct:twisting}, 
the twisting of the $(\RM,\FG,\Se)$-vertex algebra $\VLg$ 
by the singular bicharacter defined by \eqref{eq:bichar:al}
is another $(\RM,\FG,\Se)$-vertex algebra.
Let us denote this new one by $\Vghat$.
$\Vghat(\{1\})$ is an $R$-vector space 
with a basis \eqref{eq:basis}.
To distinguish it from the old $\VLg(\{1\})$, 
let us denote the vacuum vector in the new one by $v_k$,
and denote the basis as 
\begin{align}
\label{eq:basis:k}
 \bigl\{ J^{a_1}_{n_1} J^{a_2}_{n_2} \cdots J^{a_l}_{n_l} v_k \mid
    l \in \bb{Z}_{\ge 0},\ 
    n_1 \le n_2 \le \cdots \le n_l <0,\  
   \text{ if } n_i = n_{i+1} \text{ then } a_i \le a_{i+1} \bigr\}.
\end{align}

Recalling the formula \eqref{eq:stp} 
for the singular tensor product in $\Vghat$ 
and the twisted product \eqref{eq:twisted_product} yields
$$
 \bigl[Y(J^a_{-1}v_k,z),Y(J^b_{-1}v_k,w)\bigr]= 
 \dfrac{k(J^a,J^b)}{(z-w)^2} + \dfrac{[J^a,J^b](w)}{z-w},
$$
which coincides with the formula in the vertex algebra 
of affine Kac-Moody Lie algebra $\ghat$ with level $k$.
Therefore we get

\begin{prop}
For the twisting $\Vghat$ 
of the $(\RM,\FG,\Se)$-vertex algebra $V$ 
by the singular bicharacter defined by \eqref{eq:bichar:al},
the associated ordinary vertex algebra $\Vghat(\{i\})$ 
coincides with the vertex algebra $V_k(\fg)$ 
of Kac-Moody Lie algebra $\ghat$ with level $k$.
\end{prop}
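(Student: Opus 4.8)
The plan is to mirror the argument used for $\VLg$ in the previous subsection, inserting the central term that arises from the bicharacter \eqref{eq:bichar:al}. By Fact~\ref{fct:twisting} the twisting $\Vghat$ is again an $(\RM,\FG,\Se)$-vertex algebra, so Fact~\ref{fct:Borcherds} equips $\Vghat(\{1\})$ with the structure of an ordinary vertex algebra over $R$. As noted above, $\Vghat(\{1\})$ is the $R$-vector space with PBW basis \eqref{eq:basis:k}, which is precisely a basis of $V_k(\fg)$; hence the two vertex algebras already agree as $R$-modules, and it remains only to match their state--field correspondences.

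Next I would compute the generating vertex operators. Following the proof of Fact~\ref{fct:Borcherds}, the operator $Y(J^a_{-1}v_k,x_1)$ is read off from the Taylor expansion \eqref{eq:taylor} applied to the singular tensor product $J^a_{-1}v_k \odot (-)$ in $\Vghat$. Since this singular tensor product is the twist by $r$ of the one on $\VLg$ given in \eqref{eq:stp}, the calculation splits into two contributions: the untwisted part repeats verbatim the loop-algebra computation of the previous subsection and produces the field $J^a(z)=\sum_n J^a_{-n-1}x^n$ acting through the bracket $J^{[a,b]}$, while the twisted part, governed by the twisted product \eqref{eq:twisted_product} with the bicharacter \eqref{eq:bichar:al}, contributes the double-pole term $k(J^a,J^b)/(x_1-x_2)^2$. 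Concretely this yields the operator product
\[
 \bigl[Y(J^a_{-1}v_k,z),Y(J^b_{-1}v_k,w)\bigr]
 = \dfrac{k(J^a,J^b)}{(z-w)^2} + \dfrac{[J^a,J^b](w)}{z-w},
\]
which is exactly the commutation relation of $\ghat$ at level $k$.

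The hard part will be tracking the normalization of the central term, namely checking that the mode relation $[J^a_m,J^b_n]=J^{[a,b]}_{m+n}+m\,k(J^a,J^b)\,\delta_{m+n,0}$ emerges with the correct factor $m$. This factor is produced by the $\FG$-action (acting as derivation through the $D^{(i)}$) applied to the double pole of \eqref{eq:bichar:al}, and is the same sort of binomial bookkeeping as in the loop-algebra and Heisenberg calculations; it is controlled by the explicit form of \eqref{eq:twisted_product}. Granting this, $Y(J^a_{-1}v_k,z)$ sends the vacuum to $J^a(z)v_k\in J^a_{-1}v_k+zR[[z]]$ and satisfies the affine commutation relations, so Goddard's uniqueness Theorem \cite[3.1.1]{FB} forces $Y(J^a_{-1}v_k,z)=J^a(z)$ as fields. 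Finally, since $\Vghat(\{1\})$ is $\FG$-generated by $\{J^a_{-1}v_k\mid a=1,\ldots,\dim\fg\}$ in the sense of Remark~\ref{rmk:FG-generate}, the reconstruction theorem \cite[2.3.11]{FB} determines the entire state--field map from these generating fields, and it therefore coincides with that of $V_k(\fg)$.
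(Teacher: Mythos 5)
Your proposal is correct and follows essentially the same route as the paper: the paper's own argument consists precisely of combining the singular tensor product \eqref{eq:stp} with the twisted product \eqref{eq:twisted_product} to obtain the level-$k$ OPE of the generating fields, and then (implicitly, mirroring the loop-algebra case) invoking Goddard's uniqueness and the reconstruction theorem of \cite{FB}. Your write-up merely makes explicit the steps the paper leaves to the reader, including the identification of underlying $R$-modules via the PBW basis \eqref{eq:basis:k}.
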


Applying our construction to the case 
where $\fg$ is the one-dimensional commutative Lie algebra,
one gets as $\Vghat(\{1\})$ the Heisenberg vertex algebra
(denoted as $\pi_0$ in \cite[\S\S 2.1 -- 2.4]{FB}).
Similarly from the positive-definite even lattice 
one gets the lattice vertex algebra.
These two cases were investigated in \cite{P}.

\subsection{The Virasoro vertex algebra}

The ordinary vertex algebra attached to Virasoro algebra 
can also be treated in our formulation.
Let us denote by $Vir$ the Virasoro Lie algebra 
with generators $\{L_n \mid n \in \bb{Z} \}$ 
and the central element $C$ 
defined over the complex number field $\bb{C}$.
The commutation relation is given by 
\begin{align}
\label{eq:vir}
 [L_m,L_n]=(m-n)L_{m+n} + \dfrac{ m^3-m }{12}  \delta_{m+n,0} C
\end{align}
as usual.

We construct an $(\RM,\FG,\Se)$-vertex algebra $\Vvir$ 
(with $R = \bb{C}$)
as follows.

Fix a complex number $c \in \bb{C}$.
Consider a Lie subalgebra 
$Vir_+ := \bigoplus_{n \in \bb{Z}_{\ge -1}}\bb{C} L_n \oplus \bb{C} C$ 
of $Vir$ and its one-dimensional representation 
$\bb{C}_c = \bb{C} v_c$ 
where $L_n$'s act trivially and $C$ acts by $c$.
Denote the induced representation of $Vir$ by
$$
 \Vir_c := \Ind^{Vir}_{Vir_+} \bb{C}_c
        = U(Vir) \otimes_{U(Vir_+)} \bb{C}_c.
$$
$U(Vir)$ has a cocommutative bialgebra structure,
and it induces another structure on $\Vir_c$.
In particular, the comultiplication on $L_nv_c$ ($n<-1$) 
is given by 
$\Delta(L_n v_0)= (L_n \otimes 1 + 1 \otimes L_n)v_c \otimes v_c$. 

We apply the construction of $\VLg$ to the derived algebra 
$[Vir,Vir]$, i.e, the Virasoro Lie algebra without central extension.
The space of fields, that is the $\bb{C}$-vector space $\Vvir(\{1\})$, 
is given by $\Vir_0$.
It has a basis 
$$
 \bigl\{L_{n_1} L_{n_2} \cdots L_{n_l} v_0 \mid
    l \in \bb{Z}_{\ge 0},\ 
    n_1 \le n_2 \le \cdots \le n_l <-2  \bigr\}.
$$

The action of the cocommutative bialgebra 
$\FG = R[D^{(i)}]$ on $\Vir_0$
is given by
$$
 D^{(i)} A :=  \dfrac{1}{i!}L_{-1}^i A
$$
for $A \in \Vir_0$.
By the commutation relation \eqref{eq:vir} 
one can check the formula
\begin{align}
\label{eq:vir:H-action}
D^{(i)} L_{-n} v_0 =  \binom{n+i-1}{i} L_{-n-i}v_0
\end{align} 
for $n\in \bb{Z}_{>1}$ and $i \in \bb{Z}_{\ge0}$.

As $\VLg$, we get an $(\RM,\FG,\Se)$-vertex algebra $V$.
Then we want to take a twist of $V$ by some singular bicharacter.
Consider the $\FG$-invariant $\Se(\{1:2\})$-bicharacter $r$ 
of $\Vir_0$ such that 
$$
 r(L_{-2} \otimes L_{-2}) = \dfrac{c/2}{(x_1-x_2)^4}.
$$
This formula determines $r$ uniquely,
since $V_0$ is $\FG$-generated by $L_-1$ 
by the action \eqref{eq:vir:H-action}.
Then Lemma \ref{lem:sing-bichar} says that 
there is a singular bicharacter on $V$.
By Fact \ref{fct:twisting} 
we have a twisted $\RHS$-vertex algebra, 
which is denoted by $\Vvir$.

As in $\Vghat$, we rename the vacuum vector of $\Vvir(\{1\})$ 
as $v_c$. 
Then one can check that $T(z) :=  Y(L_{-2}v_c,z)$ satisfies
$$
 \bigl[ T(z),T(w) \bigr]
 = \dfrac{ 2T(w)}{(z-w)^2}
  + \dfrac{\partial_w T(w)}{z-w}
  + \dfrac{c/2}{(z-w)^4},
$$ 
which coincides with the OPE of conformal field of
the Virasoro vertex algebra.
Finally we have 

\begin{prop}
For the $\RHS$-vertex algebra $\Vvir$,
the associated ordinary vertex algebra $\Vvir(\{1\})$
coincides with the Virasoro vertex algebra with central charge $c$. 
\end{prop}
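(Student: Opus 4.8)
The plan is to imitate the arguments already used for $\VLg$ and $\Vghat$: rather than describe every vertex operator, reduce the identification to a single generating field and then appeal to the reconstruction theorem. By the formula \eqref{eq:vir:H-action} the space $\Vir_c=\Vvir(\{1\})$ is $\FG$-generated by the single vector $L_{-2}v_c$ (the would-be conformal vector), so it suffices to show that the vertex operator $Y(L_{-2}v_c,z)$ agrees with the standard Virasoro field $T(z)=\sum_{n}L_n z^{-n-2}$ acting on $\Vir_c$, and that the displayed commutator of $T$ with itself holds. Granting these, Goddard's uniqueness theorem \cite[3.1.1]{FB} upgrades the pointwise agreement $Y(L_{-2}v_c,z)A = T(z)A$ to an identity of fields $Y(L_{-2}v_c,z)=T(z)$, and the reconstruction theorem \cite[2.3.11]{FB} then forces the entire vertex operator map to coincide with that of the Virasoro vertex algebra of central charge $c$.

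First I would compute $Y(L_{-2}v_c,z)$ on an arbitrary basis vector by the Taylor expansion formula \eqref{eq:taylor}, exactly as $Y(J^a_{-1}v_0,z)$ was computed in the loop case. The input is the singular tensor product on $\Vvir$, which by Fact \ref{fct:twisting} is the twist by $r$ of the singular tensor product on the auxiliary central-charge-zero vertex algebra $V$; the latter is governed by the bracket $[L_m,L_n]=(m-n)L_{m+n}$ of $[Vir,Vir]$ and is written in normal-ordered form as in the analog of \eqref{eq:stp}, while the twisting contributes precisely the fourth-order pole through the twisted product \eqref{eq:twisted_product} and the $\FG$-invariant extension (Lemma \ref{lem:sing-bichar}) of the bicharacter $r(L_{-2}\otimes L_{-2})=(c/2)/(x_1-x_2)^4$. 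Collecting the singular parts should reproduce
\begin{align*}
 \bigl[ T(z),T(w) \bigr]
 = \dfrac{ 2T(w)}{(z-w)^2}
  + \dfrac{\partial_w T(w)}{z-w}
  + \dfrac{c/2}{(z-w)^4},
\end{align*}
with the regular-algebra poles coming from $(m-n)L_{m+n}$ and the last pole from $r$.

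The hard part will be the bookkeeping for the mode-dependent structure constants of $Vir$. In the loop case $[A,B]$ is independent of the modes, so the singular tensor product produced a single simple pole $J^{[a,b]}(w)/(z-w)$; here the factor $(m-n)$ forces this to split into the two singular terms $2T(w)/(z-w)^2$ and $\partial_w T(w)/(z-w)$. Getting the conformal weight right --- namely the shift $z^{-n-2}$ that makes $L_{-2}v_c$ a weight-$2$ vector --- and locating exactly where the derivative $\partial_w T(w)$ is generated are the delicate points; by contrast the central term $c/2$ is read off directly from $r$ and its $\FG$-invariant extension and is comparatively routine. Once the commutator is confirmed, the two general theorems quoted above close the argument.
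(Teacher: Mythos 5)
Your proposal follows essentially the same route as the paper: construct $\Vvir$ as the twist of the central-charge-zero algebra by the bicharacter $r(L_{-2}\otimes L_{-2})=(c/2)/(x_1-x_2)^4$, verify that the generating field $Y(L_{-2}v_c,z)$ satisfies the Virasoro OPE (with the fourth-order pole coming from the twist and the remaining singular terms from the bracket of $[Vir,Vir]$), and then invoke Goddard's uniqueness and the reconstruction theorem. The paper is in fact terser than you are --- it simply asserts ``one can check'' the commutator of $T(z)$ with itself and concludes --- so your proposal fills in the same skeleton with slightly more detail and is correct.
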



\section{Quantum vertex algebra }

In this section we follow the formulation of $\AHS$-quantum vertex algebras
given in \cite{B:2001}.
It can be considered as a deformation of $\AHS$-vertex algebras
discussed in the previous section.
We remark that there are several formulations on 
deformation of ordinary vertex algebras,
for example \cite{FR}, \cite{EK}, \cite{Li:2005}, \cite{AB}, \cite{Li:2010} and \cite{Li:2011}.

\subsection{Borcherds' formulation}

We begin with introduction of braided rings.
Let $C$ be a fixed commutative ring.

\begin{dfn}
\label{dfn:Rmatrix}
Let $A$ be a unital associative $C$-algebra.
A $C$-homomorphism $R: A \otimes A \to A \otimes A$ is called an $R$-matrix
if it satisfies the following conditions:
\begin{enumerate}
\item Yang-Baxter equation:
 $R_{12}R_{13}R_{23} = R_{23}R_{13}R_{12}$.
\item
 $R_{12}m_{12} = m_{12}R_{23}R_{13}$ and  
 $R_{12}m_{23} = m_{23}R_{12}R_{13}$ 
 as $C$-homomorphisms $A \otimes A \otimes A \to A \otimes A$.
\item
 $R(1\otimes a) = 1 \otimes a$ and 
 $R(a\otimes 1) = a \otimes 1$ for any $a \in A$.
\end{enumerate}
\end{dfn}

The following lemma is due to \cite[Lemma 10.1]{B:1998}
where the ring $A$ is assumed to be commutative.

\begin{lem}
Suppose $A$ is a unital associative ring and 
$R$ is an $R$-matrix for $A$.
Then the operation $m_{12}R_{12}$ 
defines another unital associative ring 
$(A,m_{12}R_{12},1_A)$,
where $1_A$ is the unit of the original ring structure on $A$.
\end{lem}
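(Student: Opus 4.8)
The plan is to realize the new multiplication as the composite $m' := m \circ R \colon A \otimes A \to A$ and to verify the two ring axioms (unit and associativity) at the level of $C$-module morphisms, using only the three axioms of Definition~\ref{dfn:Rmatrix} together with the associativity of the original product. The unit axiom is immediate: condition (3) gives $R(a \otimes 1) = a \otimes 1$ and $R(1 \otimes a) = 1 \otimes a$, whence $m'(a \otimes 1) = m(a \otimes 1) = a$ and $m'(1 \otimes a) = m(1 \otimes a) = a$, using that $1_A$ is the unit for $m$. I would also point out that commutativity of $A$ is nowhere needed, so this genuinely extends the commutative statement of \cite[Lemma 10.1]{B:1998}.

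The heart of the matter is associativity, which I would prove by rewriting both $m' \circ (m' \otimes \id)$ and $m' \circ (\id \otimes m')$ as maps $A^{\otimes 3} \to A$ and showing they agree. Writing $m_{12}, m_{23}$ for the partial multiplications $A^{\otimes 3} \to A^{\otimes 2}$ on the indicated legs, one checks directly that $m' \otimes \id = m_{12} R_{12}$ and $\id \otimes m' = m_{23} R_{23}$, so that
\begin{align*}
 m' \circ (m' \otimes \id) &= m \circ R \circ m_{12} \circ R_{12}, \\
 m' \circ (\id \otimes m') &= m \circ R \circ m_{23} \circ R_{23}.
\end{align*}
The next step is to push the inner $R$ past the partial multiplication using the two compatibility relations of condition (2): since $R \circ m_{12} = m_{12} R_{23} R_{13}$ and $R \circ m_{23} = m_{23} R_{12} R_{13}$ (the copy of $R$ sitting on the two-fold output of $m_{12}$, resp. $m_{23}$, being $R_{12}$ in both cases), the two expressions become $m \circ m_{12} \circ R_{23} R_{13} R_{12}$ and $m \circ m_{23} \circ R_{12} R_{13} R_{23}$, respectively.

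Finally I would invoke the associativity of the \emph{original} product: both $m \circ m_{12}$ and $m \circ m_{23}$ equal the common triple product $m_3 \colon a \otimes b \otimes c \mapsto abc$, since $(ab)c = a(bc)$. Hence the left and right associators reduce to $m_3 \, R_{23} R_{13} R_{12}$ and $m_3 \, R_{12} R_{13} R_{23}$, and these coincide by the Yang--Baxter equation (condition (1)), which reads $R_{12} R_{13} R_{23} = R_{23} R_{13} R_{12}$. The only real obstacle I anticipate is bookkeeping: keeping the leg indices and the left-to-right composition order consistent, so that the two relations in condition (2) are applied with exactly the right indices; once everything is arranged with the braidings collected to the right of the multiplications, the Yang--Baxter equation does the rest.
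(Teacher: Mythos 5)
Your proof is correct and follows essentially the same route as the paper's: push the outer $R$ past the partial multiplication via condition (2), reduce both associators to the common triple product $m\circ m_{12}=m\circ m_{23}$ composed with a string of $R$'s, and finish with the Yang--Baxter equation. The only cosmetic difference is that you normalize both sides to $m_3\,R_{23}R_{13}R_{12}$ and $m_3\,R_{12}R_{13}R_{23}$ and compare, whereas the paper chains one expression directly into the other; you also spell out the unit axiom from condition (3), which the paper omits.
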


\begin{proof}
We only write down the proof of the associativity.
\begin{align*}
 m_{12}R_{12}m_{23}R_{23}
=m_{12}m_{23}R_{12}R_{13}R_{23}
=m_{12}m_{23}R_{23}R_{13}R_{12}
=m_{12}m_{12}R_{23}R_{13}R_{12}
=m_{12}R_{12}m_{12}R_{12}.
\end{align*}
\end{proof}

\begin{dfn}
A braided ring $A$ is a ring with an $R$-matrix $R$ such that
\begin{align}
\label{eq:braid-comm}
 m_A R = m_A \tau: A \otimes A \longto A. 
\end{align}
Here $\tau: a \otimes b \mapsto b \otimes a$ 
is the twist map and $m_A$ is the multiplication of $A$.
\end{dfn}

The  twisting construction gives us a family of braided rings. 
Before stating Borchreds' construction,
let us note
\begin{rmk}
\begin{enumerate}
\item
For a cocommutative bialgebra $B$,
$C$-valued bicharcters of $B$ 
form a commutative monoid under the multiplication 
$$
 (r*s)(a \otimes b) := \sum_{(a),(b)} r(a' \otimes b')s(a'' \otimes b'')
$$
and the unit 
\begin{align}
\label{eq:bichar:unit}
 \ve(a \otimes b) := \ve(a) \ve(b)
\end{align}
given by the counit $\ve$ of $B$.
A bicharacter $r$ is called invertible 
if it has its inverse $r^{-1}$ in this monoid.

\item
If $B$ is a Hopf algebra, 
then the inverse $r^{-1}$ is given by 
$$
 r^{-1}(a \otimes b) = r(S_B(a) \otimes b)
$$
with $S_B$ the antipode of $B$.
\end{enumerate}
\end{rmk}

The followin lemma is due to \cite[Lemma/Definition 2.6]{B:2001},
where $M$ is assumed to be commutative.

\begin{lem}
Consider the twisting $\wt{M}$ 
of the commutative cocommutative bialgebra $M$ 
by a $C$-valued bicharacter $r$.
If $r$ is invertible, then $\wt{M}$ is a braided ring.
\end{lem}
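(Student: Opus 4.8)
The plan is to exhibit an explicit $R$-matrix on $\wt{M}=(M,\circ_r,1_M)$ and to verify the three conditions of Definition~\ref{dfn:Rmatrix} together with the braided commutativity \eqref{eq:braid-comm}. Condition (3) forces $R$ to fix $1\otimes a$ and $a\otimes 1$, so $R$ cannot contain the naive flip $\tau$; instead I would look for a ``diagonal'' braiding
\begin{align*}
 R(a\otimes b):=\sum a'\otimes b'\,s(a''\otimes b'')
\end{align*}
built from an auxiliary $C$-valued bicharacter $s$ on $M$. The braided commutativity is what pins down $s$: since $M$ is commutative one has $m_{\wt M}\tau(a\otimes b)=b\circ_r a=\sum a'b'\,r(b''\otimes a'')$, whereas expanding the twisted product gives $m_{\wt M}R(a\otimes b)=\sum a'b'\,(r*s)(a''\otimes b'')$, so one needs $r*s=r^{\mathrm{op}}$ with $r^{\mathrm{op}}(a\otimes b):=r(b\otimes a)$. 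Using cocommutativity of $M$ one checks that $r^{\mathrm{op}}$ is again a bicharacter, and since the bicharacters form a commutative monoid in which $r$ is assumed invertible, the prescription $s:=r^{-1}*r^{\mathrm{op}}$, i.e. $s(a\otimes b)=\sum r^{-1}(a'\otimes b')\,r(b''\otimes a'')$, makes sense and is a bicharacter. This is exactly where the invertibility hypothesis on $r$ is used.

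With this $R$ chosen, the remaining conditions become Sweedler-calculus identities. Denoting by $\beta_p$ the diagonal operator attached to a bicharacter $p$ (so $R=\beta_s$), one has $\beta_p\beta_q=\beta_{p*q}$ and $\beta_{\ve}=\id$. Condition (3) is then immediate from $s(1\otimes x)=s(x\otimes 1)=\ve(x)$ together with the counit axiom. For the Yang--Baxter equation~(1), since $R$ introduces no permutation, both $R_{12}R_{13}R_{23}$ and $R_{23}R_{13}R_{12}$ collapse to the operator sending $a\otimes b\otimes c$ to $\sum a'\otimes b'\otimes c'$ times a product of three scalars $s(\cdot\otimes\cdot)$ on the remaining coproduct components, and the two triple products agree after relabelling, which is legitimate because $M$ is cocommutative and the values lie in the commutative ring $C$. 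For the compatibility relations~(2) one expands $R_{12}m_{12}$ against $m_{12}R_{23}R_{13}$ (and the mirror pair), using the multiplicativity of $s$ and $r$ in each slot; once more both sides coincide after a cocommutativity-driven permutation of coproduct factors.

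The only genuinely delicate step, and the one to watch, is that in $\wt{M}$ the multiplication appearing in condition~(2) is the twisted product $\circ_r$, for which $\Delta$ is \emph{not} an algebra map. Thus in expanding $R\bigl((a\circ_r b)\otimes c\bigr)=\beta_s\bigl((a\circ_r b)\otimes c\bigr)$ one must carry along the extra factor $r(\cdot\otimes\cdot)$ produced by $\circ_r$ and check that the same factor is reproduced on the side $m_{12}R_{23}R_{13}$; concretely, both expressions reduce to $\sum a'b'\otimes c'$ times $r(a''\otimes b'')\,s(a'''\otimes c'')\,s(b'''\otimes c''')$ up to permuting coproduct components, hence coincide by cocommutativity of $M$ and commutativity of $C$. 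The whole verification is therefore routine provided one keeps careful track of how many times each of $a,b,c$ is comultiplied and which bicharacter consumes each resulting component; organizing this bookkeeping is the main obstacle, after which all four conditions hold and $\wt{M}$ is a braided ring.
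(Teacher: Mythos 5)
Your proof is correct and follows essentially the same route as the paper: your diagonal operator $\beta_s$ with $s=r^{-1}*r^{\mathrm{op}}$ coincides, after a cocommutativity relabelling of coproduct components, with the paper's $R(a\otimes b)=\sum a'\otimes b'\,r'(b''\otimes a'')$ where $r'(a\otimes b)=\sum r(a'\otimes b')\,r^{-1}(b''\otimes a'')$, and the remaining verifications are the same Sweedler-calculus manipulations using cocommutativity of $M$ (for the Yang--Baxter and compatibility identities) and commutativity of $M$ (for the braided commutativity). Your attention to the twisted versus untwisted product in condition (2) is well placed---the paper's displayed computation there tacitly works with the untwisted product, but as you observe the extra factor $r(a''\otimes b'')$ produced by $\circ_r$ appears identically on both sides, so the identity holds either way.
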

\begin{proof}
We wride down a proof for the sake of completeness.
The $R$-matrix for the braided ring $\wt{M}$ is given by 
$$
 R(a \otimes b) := 
 \sum_{(a),(b)} a' \otimes b' r'(b'' \otimes a''),
$$
with $r'$ a bicharacter defined to be 
$$
 r'(a \otimes b) := 
 \sum_{(a),(b)}r(a' \otimes b') r^{-1}(b'' \otimes a'').
$$ 

We show that the above formula does define an $R$-matrix.
For the Yang-Baxter equation, we have
\begin{align*}
  R_{12}R_{13}R_{23}(a \otimes b \otimes c)
&=R_{12}R_{13}\bigl(\sum a \otimes b^{(1)} \otimes c^{(1)} r'(c^{(2)}\otimes b^{(2)})\bigr)
\\
&=R_{12}\bigl(\sum a^{(1)} \otimes b^{(1)} \otimes c^{(1)} 
             r'(c^{(2)}\otimes a^{(2)}) r'(c^{(3)}\otimes b^{(2)})\bigr)
\\
&=\sum a^{(1)} \otimes b^{(1)} \otimes c^{(1)} 
             r'(b^{(2)}\otimes a^{(2)}) r'(c^{(2)}\otimes a^{(3)}) r'(c^{(3)}\otimes b^{(3)})
\end{align*}
and
\begin{align*}
  R_{23}R_{13}R_{12}(a \otimes b \otimes c)
&=R_{23}R_{13}\bigl(\sum a \otimes b^{(1)} \otimes c^{(1)} r'(b^{(2)}\otimes a^{(2)})\bigr)
\\
&=R_{23}\bigl(\sum a^{(1)} \otimes b^{(1)} \otimes c^{(1)} 
             r'(c^{(2)}\otimes a^{(2)}) r'(b^{(2)}\otimes a^{(3)})\bigr)
\\
&=\sum a^{(1)} \otimes b^{(1)} \otimes c^{(1)} 
             r'(c^{(2)}\otimes b^{(2)}) r'(c^{(3)}\otimes a^{(2)}) r'(b^{(3)}\otimes a^{(3)}).
\end{align*}
These two equations are equal by the cocommutativity of $M$.

The first half of the second condition in Definition \ref{dfn:Rmatrix} can be checked by
\begin{align*}
R_{12}m_{12}(a \otimes b \otimes c)
&=R_{12}(ab \otimes c)
 =\sum a^{(1)}b^{(1)} \otimes c^{(1)} r'(c^{(2)} \otimes a^{(2)} b^{(2)})
\\
& =\sum a^{(1)}b^{(1)} \otimes c^{(1)} r'(c^{(2)} \otimes a^{(2)})
    r'(c^{(3)}\otimes b^{(2)})
\end{align*}
and
\begin{align*}
m_{12}R_{23}R_{13}(a \otimes b \otimes c)
&=m_{12}R_{23}\bigl(\sum a^{(1)} \otimes b \otimes c^{(1)} r'(c^{(2)}\otimes a^{(2)})\bigr)
\\
&=m_{12}\bigl(\sum a^{(1)} \otimes b^{(1)} \otimes c^{(1)} 
             r'(c^{(2)}\otimes b^{(2)}) r'(c^{(3)}\otimes a^{(2)})\bigr)
\\
&=\sum a^{(1)} b^{(1)} \otimes c^{(1)} 
             r'(c^{(2)}\otimes b^{(2)}) r'(c^{(3)}\otimes a^{(2)}).
\end{align*}
We used the cocommutativity and the bialgebra property in this demonstration.
The last half is shown similarly.

The third consition in Definition \ref{dfn:Rmatrix}
is easily checked.
Note that we have not used the commutativity of $M$ so far.

The braided commutativity condition \eqref{eq:braid-comm} 
can be checked by
\begin{align*}
\wt{m}R(a \otimes b)
&=\wt{m}\bigl(\sum a^{(1)}\otimes b^{(1)}r'(b^{(2)}\otimes a^{(2)})\bigr)
\\
&=\sum a^{(1)} b^{(1)}r(a^{(2)}\otimes b^{(2)}) r'(b^{(3)}\otimes a^{(3)})
\\
&=\sum a^{(1)} b^{(1)}r(b^{(2)}\otimes a^{(2)})
\\
&=\wt{m}\tau(a \otimes b),
\end{align*}
where $\wt{m}$ is the twisted multiplication on $\wt{M}$,
and at the last line we used the commutativity of $M$.
\end{proof}

\begin{rmk}
The twisting $\wt{M}$ by the unit bicharacter \eqref{eq:bichar:unit} 
is the original algebra $M$.
In this case, $r'=\ve$ and $R$ is the identity operator.
\end{rmk}

The notion of $R$-matrix can also be introduced 
in an additive symmetric monoidal category $\cA$.
Hereafter we switch to this categorical setting.
Using the singular tensor product, Borcherds introduced  

\begin{dfn}[{\cite{B:2001}}]
Let $H$ be a cocommutative bialgebra object in $\cA$,
and $S$ be a commutative ring object in 
the additive symmetric monoidal category $\Fun(\Fine,\cA,T^*(H))$.
Define a quantum $\AHS$-vertex algebra 
to be a singular braided ring in $\Fun(\Fin,\cA,T^*(H),S)$.
\end{dfn}

The twisting construction gives some examples of 
quantum $\AHS$-vertex algebra.
The main theorem in \cite{B:2001} was

\begin{fct}[{\cite[Theorem 4.2]{B:2001}}]
\label{fct:twisting:quantum}
Suppose that $H$ is a cocommutative bialgebra in $\cA$ 
and that $S$ is a commutative ring in $\Fun(\Fine,\cA,T^*(H))$.
Assume that $r$ is an invertible $S(\{1:2\})$-valued bicharacter 
of a commutative and cocommutative bialgebra $M$ in $\cA$.
Then the twisting of $T_*(M)$ by $r$
is a quantum $(\cA,H,S)$-vertex algebra.
\end{fct}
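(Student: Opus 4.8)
The plan is to follow the same route as the commutative case in Fact~\ref{fct:twisting}, but now carrying the full $R$-matrix data rather than only commutativity, and to reduce the braided axioms to the non-singular braided ring lemma proved above. First I would extend the invertible bicharacter $r$ to a singular bicharacter on $T_*(M)$ by the formula of Lemma~\ref{lem:sing-bichar}; that construction uses only the coalgebra structure of $M$ and the values of $r$, so the absence of symmetry causes no trouble. Applying the same formula to $r^{-1}$ produces a second singular bicharacter, and one checks directly from the definition of the convolution product that the two are mutually inverse. Thus $r$ yields an \emph{invertible} singular bicharacter on $T_*(M)$.

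Next I would use this singular bicharacter to twist the singular tensor product on $T_*(M)$, exactly as in the proof of Fact~\ref{fct:twisting}, obtaining a singular ring object $T_*(M)^r$ in $\Fun(\Fin,\cA,T^*(H),S)$. I expect the genuine obstacle to lie precisely here, in well-definedness inside the functor category: one must verify that the twisted product, and the $R$-matrix to be defined below, descend to morphisms of $S$-modules compatible with the $T^*(H)$-action, i.e.\ that they respect the filtered colimit defining $\odot$ and the relations in \eqref{eq:V:ext}. The $H$-invariance of the bicharacter is what guarantees compatibility with the $T^*(H)$-action through \eqref{eq:H-mod}, while the commutativity of $S$ allows the bicharacter values to be permuted freely past one another, which is exactly what keeps the singular localization consistent.

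Finally I would define the singular $R$-matrix by the pointwise formula of the braided ring lemma, $R(a\otimes b) = \sum a'\otimes b'\, r'(b''\otimes a'')$ with $r'(a\otimes b) = \sum r(a'\otimes b')\,r^{-1}(b''\otimes a'')$, now read inside $T_*(M)^r \odot T_*(M)^r$. Granting the well-definedness above, the three conditions of Definition~\ref{dfn:Rmatrix} and the braided commutativity \eqref{eq:braid-comm} follow verbatim from the computations already carried out in the non-singular braided ring lemma: the Yang-Baxter equation and the two compatibilities $R_{12}m_{12}=m_{12}R_{23}R_{13}$, $R_{12}m_{23}=m_{23}R_{12}R_{13}$ use only the cocommutativity and bialgebra axioms of $M$, while $m_A R = m_A \tau$ uses the commutativity of $M$. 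Since each of these identities holds at every stage of the defining colimit and is compatible with the transition maps, it passes to $\odot$, and $T_*(M)^r$ is therefore a singular braided ring, that is, a quantum $(\cA,H,S)$-vertex algebra. As a consistency check one notes that when $r$ is symmetric one has $r'=\ve$ and $R=\id$, so the $R$-matrix is trivial and the construction collapses to the singular commutative ring of Fact~\ref{fct:twisting}.
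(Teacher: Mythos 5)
The paper does not prove this statement: it is quoted as a \emph{Fact} from Borcherds' Theorem~4.2, and the only ingredients the paper actually establishes are the non-singular braided ring lemma (the computation with $R(a\otimes b)=\sum a'\otimes b'\,r'(b''\otimes a'')$) and the extension Lemma~\ref{lem:sing-bichar}. Your sketch assembles exactly these ingredients in the way the paper's parallel treatment of the non-quantum case (Fact~\ref{fct:twisting}) suggests, so as a reconstruction it is on the right track, and your closing consistency check (symmetric $r$ gives $r'=\ve$, $R=\id$, recovering the commutative case) matches the paper's own remark.

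One point needs to be tightened. The statement, as the paper phrases it, twists $T_*(M)$ by a bicharacter $r$ on $M$ with no invariance hypothesis, yet in your second paragraph you invoke ``the $H$-invariance of the bicharacter'' to get compatibility with the $T^*(H)$-action via \eqref{diag:H-mod} --- an assumption you have not secured. Lemma~\ref{lem:sing-bichar} only extends \emph{$H$-invariant} bicharacters on $H(M)$ to singular bicharacters, and correspondingly the non-quantum Fact~\ref{fct:twisting} twists $T_*(H(M))$, not $T_*(M)$. To close this you should either read the statement as implicitly replacing $M$ by $H(M)$ and first apply Fact~\ref{fct:bichar-HM} to extend $r$ uniquely to an $H$-invariant bicharacter on $H(M)$, or assume invariance outright. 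With the first option your claim that the extensions of $r$ and $r^{-1}$ remain mutually inverse also needs a word: it follows from the uniqueness clause of Fact~\ref{fct:bichar-HM}, since the convolution of the two $H$-invariant extensions is an $H$-invariant bicharacter on $H(M)$ restricting to $\ve$ on $M$, hence equals $\ve$. With that patch the remaining steps --- reading the $R$-matrix identities stage by stage in the filtered colimit defining $\odot$ and noting they are preserved by the transition maps --- are exactly what the deferred proof requires.
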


\subsection{Yangian}

To construct ordinary vertex algebras in the framework of Borcherds,
the commutative ring object $\Se$ (Definition~\ref{dfn:Se})
in $\Fun(\Fine,\RM,T^*(\FG))$ was a key ingredient.
It encodes the singular behavior of vertex operators 
$Y(\;,z)$ in vertex algebras.

In this subsection we consider another singular data.
Fix an element $t \in R$.

\begin{dfn}
Define an object $\SY$ in $\Fun(\Fine,\RM)$ by 
\begin{align}
\label{eq:SY}
 \SY(I) := R[(x_i - x_j - n t)^{\pm1} \mid i\not\equiv j \text{ in } I, \ n \in \bb{Z}]
\end{align}
for $I \in \Ob(\Fine)$,
and 
$$
 \SY(f):\SY(I) \longto \SY(J), \quad 
 (x_i -x_j- n t) \longmapsto (x_{f(i)}-x_{f(j)}- n t)
$$ 
for $f \in \Fine(I,J)$.
\end{dfn}

$\SY$ with $t=0$ is nothing but $\Se$.
Similarly as Lemma \ref{lem:Se}, one can check

\begin{lem}
$\SY$ is a commutative ring object
in $\Fun(\Fine,\RM,T^*(\FG))$,
where the action of $T^*(\FG)$ on $\SY$ is given by 
the derivation.
\end{lem}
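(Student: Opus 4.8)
The plan is to run the argument for Lemma~\ref{lem:Se} essentially verbatim, the only new feature of $\SY$ relative to $\Se$ being the extra inverted generators $(x_i-x_j-nt)^{\pm1}$ with $n\neq0$; since the shift $nt$ lies in $R$, it is annihilated by every partial derivative $\partial_{x_i}$, so the derivation-theoretic structure never sees the parameter $t$. Concretely I would verify, in order: (i) $\SY$ is a well-defined functor $\Fine\to\RM$ valued in commutative $R$-algebras; (ii) each $\SY(I)$ carries an action of $T^*(\FG)(I)=\otimes_{i\in I}\FG$ by derivations, making $\SY$ an object of $\Fun(\Fine,\RM,T^*(\FG))$ in the sense of Definition~\ref{dfn:T-module}; and (iii) the component-wise multiplication is a morphism of $T^*(\FG)$-modules, so that $\SY$ is a commutative ring object in $\Fun(\Fine,\RM,T^*(\FG))$.

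For (i), the content is that $\SY(f)\colon (x_i-x_j-nt)\mapsto(x_{f(i)}-x_{f(j)}-nt)$ extends to a well-defined $R$-algebra homomorphism. The only point is that the image of each inverted generator must again be invertible in $\SY(J)$, i.e.\ that $i\not\equiv j$ in $I$ forces $f(i)\not\equiv f(j)$ in $J$; this is exactly the condition that morphisms of $\Fine$ preserve inequivalence, and is the same check already used for $\Se$. Functoriality ($\SY(\id)=\id$ and $\SY(g\circ f)=\SY(g)\circ\SY(f)$) is then immediate on generators.

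For (ii), I would let the $i$-th tensor factor $\FG$ of $T^*(\FG)(I)$ act on $\SY(I)$ by $D^{(k)}_i:=\tfrac{1}{k!}\partial_{x_i}^k$. This preserves $\SY(I)$ because $\partial_{x_i}(x_i-x_j-nt)^{-1}=-(x_i-x_j-nt)^{-2}$ again lies in $\SY(I)$, with the shift irrelevant; it is a genuine $\FG$-action since $\tfrac{1}{k!}\partial_{x_i}^k\cdot\tfrac{1}{l!}\partial_{x_i}^l=\binom{k+l}{k}\tfrac{1}{(k+l)!}\partial_{x_i}^{k+l}$ reproduces the multiplication $D^{(k)}D^{(l)}=\binom{k+l}{k}D^{(k+l)}$ of Definition~\ref{dfn:FG}, and derivatives in distinct variables commute, giving the component-wise module structure over $\otimes_{i\in I}\FG$. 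The compatibility \eqref{diag:H-mod}, i.e.\ the identity \eqref{eq:H-mod} $f_*(f^*(g).m)=g.f_*(m)$, I would reduce to generators $m=(x_i-x_j-nt)^{\pm1}$ and $g$ a single $D^{(k)}$; it then amounts to the chain rule for the substitution $x_i\mapsto x_{f(i)}$, which is insensitive to $nt$ and identical to the $\Se$ computation.

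For (iii), the multiplication and unit of each $\SY(I)$ are evidently natural in $I$, so they define morphisms $\SY\otimes\SY\to\SY$ and ${\bf 1}\to\SY$ in $\Fun(\Fine,\RM)$; that these respect the $\FG$-action is precisely the Leibniz rule $\tfrac{1}{k!}\partial_{x_i}^k(fg)=\sum_{l=0}^{k}\tfrac{1}{l!}\partial_{x_i}^l f\cdot\tfrac{1}{(k-l)!}\partial_{x_i}^{k-l}g$, which matches the comultiplication of $\FG$. Commutativity is clear since every $\SY(I)$ is a commutative ring. The main, and essentially only, obstacle is bookkeeping: no step carries a genuine difficulty beyond the single observation that $\partial_{x_i}(x_i-x_j-nt)=1$ exactly as for $\Se$, so the entire derivation structure is blind to $t$; the one substantive point to state clearly is the well-definedness in (i), which rests on preservation of inequivalence in $\Fine$.
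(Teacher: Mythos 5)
Your proposal is correct and follows exactly the route the paper intends: the paper gives no proof, saying only ``Similarly as Lemma \ref{lem:Se}, one can check,'' and your verification (functoriality via preservation of inequivalence in $\Fine$, the divided-power derivation action, compatibility \eqref{eq:H-mod}, and the Leibniz rule matching the comultiplication) is precisely that check, with the correct key observation that the constant shift $nt$ is invisible to all derivatives.
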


Thus we can consider a quantum $\RHS$-vertex algebra.
Yangians (precisely speaking, the algebras of Drinfeld currents of Yangian) 
is an example of this setting.

\subsection{Deformed chiral algebras}

In \cite{FR} Frenkel and Reshetikhin introduced the notion of 
deformed chiral algebras, in order to formulate the deformation of 
ordinary vertex algebras and treat the deformed $W$-algebras 
which emerged in the mid 1990s.

\begin{dfn}
A deformed chiral algebra is a collection of the following data:
\begin{itemize}
\item
 A $\bb{C}$-vector space $V$ called the space of fields.
\item
 A $\bb{C}$-vector space $W = \cup_{n \ge 0} W_n$ 
 called the space of states, which is union of finite dimensional
 subspaces $W_n$. 
 We consider a topology on $W$ in which $\{W_n\mid n \ge 0\}$ is
 the base of open neighborhoods of $0$.
\item
  A linear map $Y : V \to \End(W) \widehat{\otimes} [[z, z^{-1}]]$ 
  such that for each $A \in V$ each linear operator $A_n\in\End(W)$ in the expansion 
  $Y(A, z) = \sum_{n \in \bb{Z}} A_n z^{-n}$ satisfies $A_n W_m \subset W_{m+N(n)}$ 
  for any $m \in \bb{Z}_{\ge0}$ with some $N(n) \in \bb{Z}$ depending only on $A$.
\item
 A meromorphic function $S(x) : \bb{C}^\times  \to \Aut(V \otimes V)$, 
 satisfying the Yang-Baxter equation 
\begin{equation}
\label{eq:YBE}
 S_{12}(z)S_{13}(z w)S_{23}(w) = S_{23}(w)S_{13}(z w)S_{12}(z) 
\end{equation}
 for any $z,w \in \bb{C}^\times$.
\item
 A lattice $L \subset \bb{C}^\times$, 
 which contains the poles of $S(x)$.
\item
 An element $\Omega \in V$ such that $Y(\Omega,z) = \id$.
\end{itemize}
These data should satisfy the following axioms:
\begin{enumerate}
\item
For any $A_i \in V$ ($i=1,\ldots,n$),
the composition $Y(A_1,z_1) \cdots Y(A_n,z_n)$ converges
in the domain $|z_1| \gg \cdots \gg |z_n|$ 
and can be continued to a meromorphic operator valued function
$$
 R(Y(A_1, z_1) \cdots Y(A_n, z_n)): (\bb{C}^\times)^n \to \Hom(W,\overline{W}),
$$
where $\overline{W}$ is the completion of $W$ with respect to its topology.

\item
Denote $R(Y(A,z)Y(B,w))$ by $Y(A\otimes  B;z,w)$. 
Then
$$ 
 Y(A \otimes B; z,w) = Y(S(w/z)(B \otimes A);w,z).
$$

\item
The poles of the meromorphic function $R(Y(A,z)Y(B,w))$ lie on the lines 
$z = w\gamma$ with $\gamma \in L$. 
For each such line and $n \in\bb{Z}_{\ge0}$, 
there exists $C_n \in V$ such that 
$$
 \Res_{z=w\gamma} R(Y(A,z)Y(B,w))(z-w\gamma)^n \dfrac{dz}{z}
= Y (C_n,w).
$$
\end{enumerate}
\end{dfn}

Let us relate the deformed chiral algebra $(V,W,Y,S(x),L,\Omega)$ 
with a quantum $\AHS$-vertex algebra.
We begin with the singular data $S$ for the deformed chiral algebra.

\begin{dfn}
For a lattice $L\subset \bb{C}^\times$. 
Define an object $S_{L}$ in $\Fun(\Fine,\RM)$ by 
\begin{align*}
 S_L(I) := R[(x_i/x_j - \gamma)^{\pm1} \mid i\not\equiv j \text{ in } I, \ \gamma \in L]
\end{align*}
for $I \in \Ob(\Fine)$,
and 
$$
 S_L(f):\SY(I) \longto \SY(J), \quad 
 (x_i/x_j- \gamma) \longmapsto (x_{f(i)}/x_{f(j)}- \gamma)
$$ 
for $f \in \Fine(I,J)$.
\end{dfn}

Next we need a formal group ring.

\begin{dfn}
Let $H_m$ be the formal group ring of the one-dimensional multiplicative gormal group
(corresponding to the formal group law $F(X,Y)=X Y$).
\end{dfn}

As in the case of $H_a$, 
one can consider the action of $T^*(H_m)$ on $S_L$ 
(by difference operators preserving $L$).
Then one can show

\begin{lem}
$S_L$ is a commutative ring object in $\Fun(\Fine,\RM,T^*(H_m))$.
\end{lem}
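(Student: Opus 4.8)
The plan is to follow the template of Lemma~\ref{lem:Se} and of its Yangian analogue, replacing the additive formal group $\FG$ by the multiplicative one $H_m$ and the additive coordinates $x_i-x_j$ by the multiplicative coordinates $x_i/x_j$. Concretely, four things must be verified: (i) that $S_L$ is a well-defined object of $\Fun(\Fine,\RM)$; (ii) that each $S_L(I)$ carries an action of $T^*(H_m)(I)=\otimes_{i\in I}H_m$ making $S_L$ an object of $\Fun(\Fine,\RM,T^*(H_m))$; (iii) that this action satisfies the compatibility \eqref{eq:H-mod} of Definition~\ref{dfn:T-module} for every morphism $f$ of $\Fine$; and (iv) that the evident commutative $R$-algebra structure on each $S_L(I)$ turns $S_L$ into a commutative ring object in that category.

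For (i) the only point is well-definedness of $S_L(f)$: since a morphism of $\Fine$ preserves inequivalence, $i\not\equiv j$ in $I$ forces $f(i)\not\equiv f(j)$ in $J$, so $(x_{f(i)}/x_{f(j)}-\gamma)^{\pm1}$ is a genuine generator of $S_L(J)$ and the substitution $x_i\mapsto x_{f(i)}$ extends to a ring homomorphism $S_L(I)\to S_L(J)$; functoriality is then immediate. For (ii) and (iv) I would make the action explicit: $H_m$ acts at the index $i$ through the multiplicative (``difference'') shift $x_i\mapsto x_i q$, whose divided-power generators $D^{(n)}$ act by $D^{(n)}(x^m)=\binom{m}{n}x^m$. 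One checks on generators that this preserves $S_L(I)$ --- for instance $D^{(1)}(x_i/x_j-\gamma)=(x_i/x_j-\gamma)+\gamma$ when acting at $i$ and $-(x_i/x_j-\gamma)-\gamma$ when acting at $j$, both lying in $S_L(I)$, while a similar finite expansion handles the inverse generators. Since distinct indices act through commuting operators, this gives the component-wise $\otimes_{i\in I}H_m$-module structure; and since the coproduct $\Delta(D^{(k)})=\sum_{a+b=k}D^{(a)}\otimes D^{(b)}$ makes each shift an algebra map, the multiplication on $S_L(I)$ is $H_m$-linear, which is precisely (iv).

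The heart of the matter, and the step I expect to be the main obstacle, is (iii). As in the proof that $T_*(R[t^{\pm1}])$ is a $T^*(\FG)$-module, it suffices to verify \eqref{eq:H-mod} on the generating morphisms of $\Fine$. Permutations and injections are routine; the only substantial case is a morphism $f$ that folds two \emph{equivalent} indices of $I$ onto a single index of $J$ (folding inequivalent indices is forbidden in $\Fine$, so this is the sole place the coproduct of $H_m$ is genuinely needed). There $f^*$ distributes a generator $D^{(k)}$ at the image index to the two preimages via $\Delta(D^{(k)})=\sum_{a+b=k}D^{(a)}\otimes D^{(b)}$, and matching $f_*(f^*(g).m)=g.f_*(m)$ reduces, after applying the scaling action to a monomial in the folded variables, to the Vandermonde identity $\sum_{a+b=k}\binom{m}{a}\binom{n}{b}=\binom{m+n}{k}$ --- exactly the multiplicative counterpart of the computation carried out for the Heisenberg module. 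Once this identity is in hand the verification of \eqref{eq:H-mod} closes, and combining (i)--(iv) shows that $S_L$ is a commutative ring object in $\Fun(\Fine,\RM,T^*(H_m))$.
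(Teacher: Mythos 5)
Your proposal is correct and follows exactly the route the paper intends: the paper omits the proof entirely, saying only that the action of $T^*(H_m)$ is defined ``as in the case of $H_a$'' and that one argues as for Lemma~\ref{lem:Se}, which is precisely your template of transporting the additive computation (divided powers acting by $D^{(n)}(x^m)=\binom{m}{n}x^{m-n}$, Vandermonde identity for the fold $\{1,2\}\to\{1\}$) to the multiplicative generators $\binom{x\partial_x}{n}$ and the coordinates $x_i/x_j-\gamma$. The only point worth polishing is that $S_L(I)$ is generated by the $(x_i/x_j-\gamma)^{\pm1}$ rather than by literal monomials, so the reduction to the Vandermonde identity should be phrased via the multiplicativity of the coproduct on products of these generators, as you in effect do in step (ii).
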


Thus we can consider a quantum $\MHS$-vertex algebra.
Our result is 

\begin{thm}
Let $V_1$ be a $\bb{C}$-vector space and 
$S(x):\bb{C}^\times  \to \Aut(V_1 \otimes V_1)$ be a meromorphic function
satisfying the Yang-Baxter equation \eqref{eq:YBE}.
Let $V$ be a quantum $(\CM,H_M,S_L)$-vertex algebra 
given by the twisting using $S(x)$ 
(so that the underlying vector space of $V(\{1\})$ is $V_1$).
Then  $V(\{1\})$ has a structure of deformed chiral algebra.
\end{thm}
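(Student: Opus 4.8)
The plan is to mirror the proof of Fact~\ref{fct:Borcherds}, replacing the additive singular object $\Se$ by the multiplicative one $S_L$, the Taylor expansion \eqref{eq:taylor} by its multiplicative analogue attached to $H_m$, and the singular commutativity by the braided commutativity \eqref{eq:braid-comm} carried by the $R$-matrix of the singular braided ring. First I would fix the data. Take the space of fields to be $V_1 = V(\{1\})$ and the space of states $W$ to be $V_1$ as well, equipped with a suitable exhaustive filtration $W=\cup_n W_n$ providing the topology; the distinguished element is $\Omega := 1 \in V(\{1\})$, the unit of the (singular) ring structure. The braiding $S(x)$ and the lattice $L$ are the given ones. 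I would then define $Y(A,z)$ by the multiplicative counterpart of \eqref{eq:taylor}: for $f:\{1,2\}\to\{1\}$ in $\Fin$ one sends $w \in V(\{1,2\})$ to $\sum_{i,j\ge0} f_*\bigl(D_1^{(i)}D_2^{(j)}w\bigr)$ weighted by the monomials dictated by the multiplicative formal group $H_m$, composes with the extension \eqref{eq:V:ext} into $S_L$, and reads off the image of $A\odot B$ specialised at the second variable, exactly as the ordinary vertex operator is built.

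Next I would check the elementary axioms. The identity $Y(\Omega,z)=\id$ follows from $\Omega$ being the unit for the singular product, just as the vacuum axiom does in Fact~\ref{fct:Borcherds}. The finiteness requirement $A_n W_m \subset W_{m+N(n)}$, and hence the assertion that each $Y(A,z)$ is a genuine field, reduces to the fact that $S_L(\{1:2\})$ inverts only the elements $x_i/x_j-\gamma$ with $\gamma\in L$, so the singular part of $A\odot B$ has bounded order; this is the multiplicative analogue of the bound producing honest fields in the ordinary theory.

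The substantive analytic content is Axiom 1. Here I would use the explicit colimit formula for $\odot$ together with the definition of $S_L$: the $n$-fold product $v_1\odot\cdots\odot v_n$ lies in $V(\{1:2:\cdots:n\})$ and, as a function of the $x_i$, is a rational function whose only poles lie along $x_i/x_j=\gamma$ with $\gamma\in L$. This rational function is precisely the meromorphic continuation of $R\bigl(Y(A_1,z_1)\cdots Y(A_n,z_n)\bigr)$, and the iterated composition in the region $|z_1|\gg\cdots\gg|z_n|$ is its expansion there. The delicate step, which I expect to be the main obstacle, is to match the purely algebraic (rational, formal) datum produced by $\odot$ with the genuinely analytic notions of convergence and meromorphic continuation on $(\bb{C}^\times)^n$ demanded by the definition, and to verify that $|z_1|\gg\cdots\gg|z_n|$ is the expansion region correctly attached to the chosen morphism $\bigsqcup_i I_i\to I$ in the colimit; calibrating the multiplicative expansion so that these regions are reproduced is the heart of the argument.

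Finally, Axioms 2 and 3 should follow formally. Axiom 2, the exchange relation $Y(A\otimes B;z,w)=Y(S(w/z)(B\otimes A);w,z)$, is the braided commutativity \eqref{eq:braid-comm} of $V$ rewritten in terms of vertex operators, once one identifies the $R$-matrix of the twisting with $S(w/z)$ (its Yang--Baxter equation being \eqref{eq:YBE}); this is the quantum replacement for the derivation of locality from singular commutativity in Fact~\ref{fct:Borcherds}. Axiom 3 follows from the direct-limit structure of $\odot$: the colimit over morphisms $\bigsqcup_i I_i\to I$ records exactly how the product is continued across each singular line $z=w\gamma$, so that $\Res_{z=w\gamma} R(Y(A,z)Y(B,w))(z-w\gamma)^n\,dz/z$ is represented by an element $C_n\in V_1$ with the stated property. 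Assembling these verifications endows $V(\{1\})$ with the structure of a deformed chiral algebra.
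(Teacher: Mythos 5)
Your proposal is correct and takes essentially the same approach the paper intends: the paper omits the proof entirely, remarking only that it is ``similar as in the case of non-quantum $\RHS$-vertex algebras,'' and your sketch is a faithful elaboration of precisely that analogy --- multiplicative Taylor expansion for $Y$, braided commutativity \eqref{eq:braid-comm} in place of singular commutativity for the exchange axiom, and the colimit structure of $\odot$ for the residue axiom. Your identification of the matching between the formal/rational data and the analytic notions of convergence and meromorphic continuation as the delicate point is a fair observation about what the omitted proof would actually have to supply.
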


The proof is similar as in the case of non-quantum $\RHS$-vertex algebras,
so we omit it.

\section{Chiral algebras}

Let us recall the formulation of chiral algebras due to Beilinson and Drinfeld \cite{BD}.
We will use the notion of factorization algebra, 
which is equivalent to the chiral algebra (in the case of smooth algebraic curves).
 
For an algebraic curve $X$ defined over some field $k$
and an object $I$ in $\Fin$,
$X^I$ denotes the symmetric product over $k$.
$\QCoh(X^I)$ denotes the category of quasi-coherent sheaves on $X^I$.

\begin{dfn}
Let $X$ be a smooth algebraic curve defined over $\bb{C}$.
A factorization algebra over $X$ consists of data 
$\{F_I \in \Ob\QCoh(X^I) \mid I \in \Ob\Fin \}$
such that 
\begin{enumerate}
\item
$F_I(\Delta) =0$,
where $\Delta$ is the (big) diagonal of $X^I$.

\item
$\Delta_{J/I}^* F_J \simto F_J$ for $p: J \twoheadrightarrow I$,
where $\Delta_{J/I}: X^I \hookrightarrow X^J$ 
is the natural inclusion morphism induced by $p$.

\item
$j_{J/I}^* F_J \simeq j_{J/I}^*\left(\boxtimes_{i \in I} F_{p^{-1}(i)}\right)$ 
for $p: J \twoheadrightarrow I$,
where $j_{J/I} : U^{J/I} \hookrightarrow X^J$ is the inclusion 
morphism from 
$U^{J/I}:=\{(x_j) \in X^J \mid x_j \neq x_{j'} \text{ if } p(j) \neq p(j') \}$
to $X^J$.

\item
There exists $1 \in F_1(X)$ such that 
for any $f \in F_1(U)$ (where $U \subset X$ is an arbitrary open subscheme) 
the element $1 \boxtimes f \in F_2(U^2\setminus \Delta)$ extends
across $\Delta$ and restricts to $f \in F_1(U) \simeq F_2(\Delta|_U)$. 
\end{enumerate}
\end{dfn}

A morphism between factorization algebras can be defined naturally.
One of the fundamental results in the Beilinson-Drinfeld theory is

\begin{fct}\label{fct:BD}
There exists an equivalence of categories between 
the category of quasi-conformal ordinary vertex algebras $V$ 
and the category of factorization algebras $\{F_I\}$ 
such that $F_1 = \Aut_X \times_{\Aut_{\cal{O}_X}} V$.
\end{fct}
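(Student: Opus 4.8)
The plan is to exhibit mutually inverse functors between the two categories, following the localization construction that spreads a quasi-conformal vertex algebra out into a chiral (equivalently factorization) algebra on $X$ and recovers the vertex algebra as a fibre; this is the Beilinson--Drinfeld correspondence in the form reformulated in \cite{FB}.

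First I would produce the factorization algebra from a quasi-conformal vertex algebra $V$. The quasi-conformal hypothesis means that $V$ carries a compatible action of the Harish-Chandra pair $(\Der\mathcal{O},\Aut\mathcal{O})$ with $\mathcal{O}=\bb{C}[[t]]$, the positive part acting locally nilpotently and $L_0$ giving an integer grading. Let $\Aut_X \to X$ be the $\Aut\mathcal{O}$-torsor whose fibre at a point is the space of formal coordinates there. Twisting $V$ by this torsor,
$$
 F_1 := \Aut_X \times_{\Aut_{\mathcal{O}_X}} V,
$$
yields a quasi-coherent sheaf on $X$; the residual action of $\Der\mathcal{O}$ --- precisely the part generated by $L_{-1}=T$ that is not integrated by $\Aut\mathcal{O}$ --- equips $F_1$ with a flat connection, i.e.\ a left $\mathcal{D}_X$-module structure. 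I would then define $F_I$ on $X^I$ as the twist of $V^{\otimes I}$ by the analogous torsor along the open stratum, extended to $X^I$ so that the diagonal-vanishing axiom and the $\mathcal{D}_{X^I}$-structure hold. The factorization isomorphism over the complement of the diagonals is supplied by the vertex operator: on $X^2\setminus\Delta$ the operation $Y(\;,z)$ provides the coordinate-independent identification $j^*F_2 \simeq j^*(F_1 \boxtimes F_1)$, while the vacuum $\vac$ furnishes the unit element of the last axiom.

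The inverse functor takes a factorization algebra $\{F_I\}$ to the fibre of $F_1$ at a point equipped with a formal coordinate. Because any sheaf underlying a factorization algebra is automatically coordinate-independent --- it is $\Aut\mathcal{O}$-equivariant with a connection coming from the restriction and factorization axioms --- this fibre inherits a $(\Der\mathcal{O},\Aut\mathcal{O})$-action, hence a quasi-conformal structure, and the chiral product read off from the factorization isomorphism over $X^2\setminus\Delta$ together with its extension across $\Delta$ recovers the state--field correspondence $Y$. Checking that these two functors are mutually quasi-inverse is then a matter of unwinding the definitions: the operator product expansion of $Y$, encoded by the meromorphic extension across $\Delta$, is exactly the factorization datum, and conversely.

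The main obstacle will be the faithful translation between the vertex-algebra axioms and the factorization axioms. Concretely, the locality and associativity of $Y$ --- equivalently the Borcherds/Jacobi identity --- must be matched with the compatibility of the factorization isomorphisms over the deeper strata of $X^3$ (the chiral Jacobi identity on $X^3\setminus\Delta$ and its extension), and one must verify that the meromorphic continuation of compositions $Y(A_1,z_1)\cdots Y(A_n,z_n)$ produces sections of $F_I$ with poles only along the diagonals. Establishing this compatibility, together with the well-definedness of the flat connection on each $F_I$ so that the whole family forms a single $\mathcal{D}$-module over all the $X^I$, is the technical heart; the remaining verifications of the torsor twist and the unit axiom are routine.
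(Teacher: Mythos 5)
The paper itself gives no proof of this Fact---it is quoted as a known result of Beilinson--Drinfeld \cite{BD}, in the coordinate-twisted form of \cite{FB}---and your sketch is exactly the standard argument from those sources: localize the quasi-conformal $V$ along the $\Aut\cal{O}$-torsor of formal coordinates to get $F_1$ with its flat connection from $L_{-1}$, extend $F_1^{\boxtimes I}$ across the diagonals of $X^I$ via matrix elements of vertex operators to obtain the factorization structure, and invert by taking the fibre at a point equipped with a formal coordinate. So your proposal is correct in outline and coincides with the proof the paper implicitly relies on.
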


Here we used the term \emph{quasi-conformal} in the meaning of \cite[\S 6.2]{FB}.
Let us recall its definition briefly.
The space $\cal{O} := \bb{C}[[z]]$ of formal series of one variable 
with complex coefficients may be considered as 
a complete topological $\bb{C}$-algebra 
(with the topology given by the  unique maximal ideal).
Let us also consider the Lie algebras 
$$
  \Der \cal{O} := \bb{C}[[z]]\partial_z
  \  \supset \ 
  \Der_0 \cal{O} := z\bb{C}[[z]]\partial_z
  \  \supset \ 
  \Der_+ \cal{O} := z^2\bb{C}[[z]]\partial_z.
$$
Let us denote by $L_n := -z^{n+1}\partial_z \in \Der\cal{O}$ 
for $n\in\bb{Z}_{\ge -1}$.

\begin{dfn}
An ordinary vertex algebra is called quasi-conformal if it has an action of $\Der \cal{O}$ 
such that 
\begin{itemize}
\item
the formula
$$
 \Bigl[\sum_{n\ge -1}v_n L_n, Y(A,z)\Bigr]
=\sum_{m\ge-1}\dfrac{1}{(m+1)!}(\partial_w^{m+1} v(z))Y(L_m A,z)
$$
holds for any $A \in V$ and 
any $v(z)\partial_z = \sum_{n\ge-1} v_n z^{n+1}\partial_z\in \Der\cal{O}$,
\item 
the element $L_{-1} = -\partial_z$ acts as the translation operator $T$,
\item
$L_0 = -z\partial_z$ acts semisimply with integral eigenvalues, 
\item
the Lie subalgebra $\Der_+ \cal{O}$ acts locally nilpotently.
\end{itemize}
\end{dfn}

A conformal ordinary vertex algebra (ordinary vertex algebra with a Virasoro element) 
is the canonical example of quasi-conformal ordinary vertex algebra.
Let us also mention that $\Lie(\Aut\cal{O}) = \Der_0\cal{O}$,
where $\Aut \cal{O}$ is the group of continuous automorphisms of $\cal{O}$.
The axiom of quasi-conformal ordinary vertex algebra says that $\Aut\cal{O}$.
acts on the vertex algebra.
Since $\Aut \cal{O}$ is the infinitesimal symmetry of an algebraic curve,
the appearance of quasi-conformal ordinary vertex algebra in Fact \ref{fct:BD} is natural.

\begin{thm}
Let us consider the $(\cal{A},H,S)$-vertex algebra with the setting 
$$
 \cal{A} = \QCoh(X),\quad 
 H = \Der \cal{O}_X,\quad
 S(J)=\cal{O}_{U^{J/I}}.
$$
In the definition of $S(J)$ for $J \in \Ob \Fine$, 
$I$ is uniquely determined by 
the surjection 
$J \twoheadrightarrow I$ 
corresponding to $J$.

Then the $(\cal{A},H,S)$-vertex algebra has a structure of factorization algebra,
and the associated vertex algebra is quasi-conformal.
\end{thm}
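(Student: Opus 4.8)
The plan is to read off both assertions from the dictionary that turns the abstract data of an $\AHS$-vertex algebra into the localized data of Beilinson and Drinfeld, and then to invoke the equivalence of Fact~\ref{fct:BD}. The guiding principle is that the present geometric setting is the global counterpart of the affine models of \S\ref{subsec:affine}: the formal group ring $\FG$, encoding infinitesimal translations of the line, is replaced by $H=\Der\cal{O}_X$, encoding the full infinitesimal symmetry of the curve, while the singular ring $\Se$ with $\Se(I)=R[(x_i-x_j)^{\pm1}]$ is replaced by $S$ with $S(J)=\cal{O}_{U^{J/I}}$, the functions on the locus of $X^J$ where points lying in distinct equivalence classes are separated. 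Thus a singular commutative ring object $V$ in $\Fun(\Fin,\cal{A},T^*(H),S)$ carries precisely an operator product expansion whose singularities are controlled geometrically along the diagonals of the $X^I$.

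First I would establish quasi-conformality of the associated ordinary vertex algebra $V(\{1\})$. Its $T^*(H)$-module structure equips $V(\{1\})$ with an action of $\Der\cal{O}$, and exactly as $T=D^{(1)}$ in the proof of Fact~\ref{fct:Borcherds}, the generator $L_{-1}=-\partial_z$ acts as the translation operator $T$; this is the second axiom. The commutation formula relating $\sum_{n}v_nL_n$ to $Y(A,z)$ is the infinitesimal form of the assertion that the vertex operators, built from the Taylor expansion \eqref{eq:taylor}, intertwine the two $\Der\cal{O}$-actions on $V(\{1,2\})$; concretely it follows from the compatibility \eqref{eq:H-mod} of the $H$-action with the multiplication maps $f_*$. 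Semisimplicity of $L_0=-z\partial_z$ with integral eigenvalues and local nilpotence of $\Der_+\cal{O}$ come from the conformal-weight grading on $V(\{1\})$, just as in the affine and Virasoro models, and together with $\Lie(\Aut\cal{O})=\Der_0\cal{O}$ these furnish the required action.

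Second I would produce the factorization algebra. With quasi-conformality in hand, Fact~\ref{fct:BD} already yields a factorization algebra $\{F_I\}$ with $F_1=\Aut_X\times_{\Aut_{\cal{O}_X}}V(\{1\})$, so it remains to match this $\{F_I\}$ with sheaves obtained directly from $V$. The sheaf $F_I\in\QCoh(X^I)$ should be the localization of $V(I)$ over $X^I$ through the $\Aut_X$-torsor of formal coordinates together with its $S(I)$-module structure. I would then check the factorization axioms against the structure of $V$: the factorization isomorphism on the disjoint locus is precisely the statement that, away from the diagonals, $S$ restricts to the external product of the $S(p^{-1}(i))$, so that the singular tensor product $\odot$ degenerates to the ordinary external product --- singular commutativity of $V$ guaranteeing the symmetry of this isomorphism; the diagonal-restriction axiom is the functoriality $f_*=V(f)$ along a surjection $f\colon J\to I$, built from the multiplication of the ring object; and the unit axiom is the vacuum, i.e.\ the unit of that ring object. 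The vanishing condition along the diagonal is immediate from the $S(I)$-module structure.

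The main obstacle is the localization step itself: passing rigorously from the objects $V(I)$, which a priori live in $\cal{A}=\QCoh(X)$, to honest quasi-coherent sheaves $F_I$ on the powers $X^I$, compatibly with both the $\Der\cal{O}_X$-action and the $S(I)$-module structure, and then verifying that the resulting diagonal-restriction and factorization isomorphisms coincide with those of Beilinson and Drinfeld. This is where the $\Der\cal{O}_X$-action does genuine work, supplying the infinitesimal parallel transport needed to compare fibers of $F_I$ over different points of $X^I$; once this is set up, everything else is the formal-disk computation of \S\ref{subsec:affine} transported along the $\Aut_X$-torsor.
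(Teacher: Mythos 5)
The paper offers no proof of this theorem to compare against: it states only that ``the proof is similar as in the case of non-quantum $\RHS$-vertex algebras, so we omit it.'' So your sketch must be judged on its own, and as it stands it has two genuine gaps. The first is the one you yourself flag as ``the main obstacle'': in this setting $V$ is a functor $\Fin \to \cal{A} = \QCoh(X)$, so each $V(I)$ is a quasi-coherent sheaf on $X$ itself, whereas a factorization algebra requires sheaves $F_I$ on the powers $X^I$. Producing $F_I$ from $V(I)$ --- presumably by some external-product/localization construction mediated by the $\Der\cal{O}_X$-action and the $S(I)$-module structure --- is not a technical afterthought; it \emph{is} the content of the theorem, and a sketch that names this step without carrying it out has not proved anything. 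Relatedly, your route is logically doubled: you first prove quasi-conformality, invoke Fact~\ref{fct:BD} to get a factorization algebra $\{F_I\}$ from $V(\{1\})$, and then propose to ``match'' it with sheaves built from $V$; but the theorem asks for a factorization structure on the $(\cal{A},H,S)$-vertex algebra directly, so the direct construction is unavoidable and the detour through Fact~\ref{fct:BD} buys you nothing until the matching is done.

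The second gap is in the quasi-conformality argument. The identification $L_{-1}=T$ and the commutation formula do plausibly follow from the $T^*(H)$-module structure and the compatibility \eqref{eq:H-mod}, exactly as you say. But semisimplicity of $L_0$ with integral eigenvalues and local nilpotence of $\Der_+\cal{O}$ are \emph{additional} conditions in the definition of quasi-conformality, and you derive them from ``the conformal-weight grading on $V(\{1\})$'' --- a grading that is nowhere supplied by the hypotheses of the theorem. A general singular commutative ring object in $\Fun(\Fin,\QCoh(X),T^*(\Der\cal{O}_X),S)$ carries a $\Der\cal{O}_X$-action but need not carry an integral grading with the required positivity/nilpotence properties. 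Either this must be extracted from the $\Der\cal{O}_X$-equivariance (which you would need to argue), or it is an implicit hypothesis that should be made explicit; as written, you are assuming the conclusion for two of the four axioms.
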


The proof is similar as in the case of non-quantum $\RHS$-vertex algebras,
so we omit it.


\end{document}